
\UseRawInputEncoding

\documentclass[a4paper,12pt]{article}

\usepackage{amsmath,amsfonts,amssymb,amsthm,amscd}
\usepackage {amsfonts, amssymb, amscd,amsthm, amsmath, eucal}

\usepackage{amsbsy}
\usepackage[all]{xy}

\topmargin 1 cm \topskip 0 cm \textwidth 16 cm \textheight 22 cm
\oddsidemargin 0.5 cm \evensidemargin 0.5 cm \headheight 0 cm
\headsep 0 cm \marginparwidth 0 cm \footskip 1.2 cm
\theoremstyle{plain} {
  \newtheorem{thm}{Theorem}[section]
  \newtheorem{defn}[thm]{Definition}
  \newtheorem{cor}[thm]{Corollary}
  \newtheorem{lem}[thm]{Lemma}
  \newtheorem{prop}[thm]{Proposition}
  \theoremstyle{definition}
  \newtheorem{rem}[thm]{Remark}
    
    \newtheorem{exam}[thm]{Example}
  \theoremstyle{plain}
  
  \newtheorem{notation}[thm]{Notation}
  \newtheorem{agr}[thm]{Agreement}
  
}

{

}
\renewcommand{\subsubsection}{\sssection\rm}














\newcommand{\Aff}{\mathbf {A}}
\newcommand{\Pro}{\mathbf {P}}



\newcommand \hra {\hookrightarrow }






\newcommand \Tr{\text{Tr}}

\renewcommand \phi\varphi


\begin{document}

\title{Moving lemmas in mixed characteristic and applications
}

\author{Ivan Panin\footnote{February the 1-st of 2022}
}

\date{Steklov Mathematical Institute at St.-Petersburg}

\maketitle

\begin{abstract}
The present paper contains new geometric theorems proven in mixed characteristic case.
We derive a bunch of cohomological consequences using these geometric theorems.
Among them an isotropy result for quadratic spaces, a purity result for quadratic spaces,
Grothendieck--Serre conjecture for groups $SL_{1,D}$, where $D$ is an Azumaya algebra.
The Gersten conjecture for the functor $K_2$ is proved.
Bloch-Ogus type result is obtained as well. Suslin's exact sequence
is derived and its application to a finiteness result is given.
A version of the Roitman theorem is proved.
A very general result concerning
the Cousin complex is obtained for any cohomology theory
in the sense of Panin--Smirnov.
\end{abstract}

\section{Introduction}
Most of the result of this preprint are new. However some of them are well-known
in the literature:
\cite{BW}, \cite{B-F/F/P}, \cite{BP}, \cite{C-T/S}, \cite{C-T/S 87}, \cite{CHK}, \cite{C}, \cite{F}, \cite{Dru}, \cite{D-K-O}
\cite{G}, \cite{Gi1}, \cite{Gi2}, \cite{GiP}, \cite{Fe1}, \cite{Fe2}, \cite{J}, \cite{GL}, \cite{N},
\cite{Oj1}, \cite{Oj2}, \cite{Pa1}, \cite{Pa2}, \cite{SS}.
There is a good hope that developing futher methods of the present preprint one can get more applications.
We decided to postpone these to a future. Also we postpone to another preprint
proofs of three main geometric theorems formulated in
Section \ref{thms:geometric}.
Point out that \cite[Proposition 2.2.1]{C} was the starting inspiring point of the departure.

\section{Agreements}\label{agreements}
Through the paper \\
$A$ is a d.v.r., $m_A\subseteq A$ is its maximal ideal;\\
$\pi \in m_A$ is a generator of the maximal ideal; \\
$K$ is the field of fractions of $A$;
$V=Spec(A)$ and $v\in V$ is its closed point;\\
$k(v)$ is the residue field $A/m_A$;\\
$p>0$ is the characteristic of the field $k(v)$; \\
it is supposed in this notes that the field $K$ has characteritic zero;\\
$d\geq 1$ is an integer;\\
$X$ is an $A$-smooth $A$-scheme irreducible and equipped with a finite surjective $A$-morphism
$X\to \Aff^d_A$;
{\bf Particularly, all open subschemes of $X$ are regular and all its local rings are regular.}.
For an $A$-scheme $Y$ we write $Y_v$ for the closed fibre of the structure morphism $Y\to V$.\\
If $x_1,x_2,\dots,x_n$ are closed points in the scheme $X$, then write \\
$\mathcal O$ for the semi-local ring $\mathcal O_{X,\{x_1,x_2,\dots,x_n\}}$, $\mathcal K$ for the fraction field of $\mathcal O$, \\
$U$ for $Spec(\mathcal O)$, $\eta$ for $Spec(\mathcal K)$,\\
$X'\subseteq X$ for an affine Zariski open which {\bf contains all generic points of the scheme $X_v$},
but does not contain any irreducible component of the scheme $X_v$.\\\\
Examples. Let $\bar X$ be an $A$-smooth projective irreducible $A$-scheme.\\
Let $X_{\infty}\subseteq X$ be a divizor which does not contain any component
of the closed fibre of $X$ over $V$. Then $X=X-X_{\infty}$
is an $A$-scheme of our interest in this paper.\\\\
More generally: let $\bar X$ be an $A$-projective irreducible $A$-scheme which is normal.
Let $X_{\infty}\subseteq X$ be an effective Cartie divizor such that \\
a) the $A$-scheme $X=X-X_{\infty}$ is $A$-smooth; \\
b) the divisor $X_{\infty}$ does not contain any component
of the closed fibre of $X$ over $V$.\\
Then $X=X-X_{\infty}$
is an $A$-scheme of our interest in this paper.\\\\
all schemes are supposed to be separated \\\\
$Sm/V$ is the category of $V$-smooth schemes of finite type over $V$ and $V$-morphisms;\\\\
$Sm'/V$ is the category of essentially smooth $V$-schemes. Following [10],
by an essentially smooth $V$-scheme we mean a Noetherian $V$-scheme $Y$
which is the inverse limit of a left filtering system $(Y_i)_{i\in I}$ with each
transition morphism $Y_i \to Y_j$ being an \'{e}tale affine $V$-morphism between
smooth $V$-schemes of finite type;\\\\
$Sm'Op/V$ is a category whose objects are pairs $(X,W)$,
where $X\in Sm'/V$
and $W \subseteq X$ is an open subscheme. A morphism
$f: (X,W)\to (X',W')$ in $Sm'Op/V$ is just a $V$-morphism
$f: X\to X'$ such that $f(W)\subseteq W'$.\\\\
$Sch/V$ is the category of $V$-schemes of finite type and $V$-morphisms;\\\\
$Sch'/V$ is the category of $V$-schemes essentially of finite type. Recall that
by a $V$-scheme essentially of finite type we mean a Noetherian $V$-scheme $S$
which is the inverse limit of a left filtering system $(S_j)_{j\in J}$ with each
transition morphism $S_j \to S_{j'}$ being an affine $V$-morphism between
$V$-schemes of finite type;\\\\
if $l$ is a prime and $B$ is an abelian group, then $_{\{l\}}B$ denotes the subgroup of $l$-primary elements in $B$.




\section{Geometric theorems}\label{thms:geometric}

The main aim of the preprint is to state the following three geometric results and to get
plenty of their applications. Let $A$ be a d.v.r. and $X$ be an $A$-scheme as in the section
\ref{agreements}.

\begin{thm}[geometric presentation]\label{geom_pres_mixed_char}
Let $x_1,x_2,\dots,x_n$ be closed points in the scheme $X$. Let $X'$ be a neighborhood of points
$x_1,x_2,\dots,x_n$ which {\bf contains all generic points of the scheme $X_v$},
but does not contain any irreducible component of the scheme $X_v$.
Then one can find inside $X'$ an affine neighborhood $X^{\circ}$ of points
$x_1,x_2,\dots,x_n$, an open affine subscheme $S\subseteq \Pro^{d-1}_A$ and
{\bf a smooth $A$-morphism}
$$q: X^{\circ}\to S$$
equipped with a finite surjective $S$-morphism $\Pi: X^{\circ}\to \Aff^1_S$.

Let $Z$ be a closed subset in $X'$ of codimension at least 2 in $X'$ such that each its irreducible component
contains at least one of the point $x_i$'s. Then one can choose the scheme $X^{\circ}$ and
the morphism
$q$ such that additionally $Z^{\circ}/S$ is finite, where $Z^{\circ}=Z\cap X^{\circ}$.

Let $Y$ be a closed subset in $X'$ of codimension 1 such that
$Y$ does not contain any irreducible component of the scheme $X'_v$.
Then one can choose the scheme $X^{\circ}$ and
the morphism
$q$ such that additionally
$Y^{\circ}/S$ is quasi-finite and $Y^{\circ}_v$ is not-empty, where $Y^{\circ}=Y\cap X^{\circ}$.
\end{thm}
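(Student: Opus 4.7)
The plan is to construct $q\colon X^{\circ}\to S$ as the composition of the given finite map $f\colon X\to \Aff^d_A$ with a sufficiently generic linear projection of $\Aff^d_A$ onto a lower-dimensional base, and to realize $\Pi$ as the pairing of $q$ with a ``vertical'' coordinate complementary to that projection. Concretely, view $\Aff^d_A$ as the complement of a hyperplane $H_\infty$ in $\P^d_A$, and parametrize the candidate centers by points $P\in H_\infty\cong\P^{d-1}_A$. For each such $P$ one has a linear projection $\mathrm{pr}_P\colon \P^d_A\setminus\{P\}\to\P^{d-1}_A$ whose restriction to $\Aff^d_A$ lands in an affine open $S\subseteq\P^{d-1}_A$ and has $\Aff^1$-fibres. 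After compactifying to $\bar f\colon \bar X\to\P^d_A$ with $\bar X$ projective over $A$, the map $\bar X\setminus\bar f^{-1}(P)\to\mathrm{pr}_P^{-1}(S)=\Aff^1_S$ is finite as the restriction of the finite map $\bar f$. The candidate $(q,\Pi)$ is then obtained by restricting this to $X^{\circ}:=(\bar X\setminus \bar f^{-1}(P))\cap X'\cap (\mathrm{pr}_P\circ f)^{-1}(S)$.

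The heart of the proof is the choice of $P$: each required property of $(q,\Pi)$ translates into a proper closed condition on $P\in H_\infty$ that must be avoided. Smoothness of $q$ at each $x_i$ is equivalent to the line in $\P^d_A$ through $P$ and $\bar f(x_i)$ being transverse, via $d\bar f$, to $T_{x_i}X$; this cuts out a proper closed subset of $H_\infty$ determined by the tangent spaces at the $x_i$'s. The condition $P\notin\bar f(\{x_1,\dots,x_n\})$ is open in $H_\infty$. For $Z$ of codimension at least $2$ in $X'$, the finiteness of $Z^{\circ}/S$ is achieved once no irreducible component of $Z$ is contracted to a lower-dimensional image by $\mathrm{pr}_P\circ \bar f$; this is a proper closed condition on $P$ since a generic projection from $H_\infty$ is dimension-preserving on the $(d-1)$-dimensional set $Z$. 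For $Y$ of codimension $1$ not containing a component of $X'_v$, quasi-finiteness of $Y^{\circ}/S$ together with non-emptiness of $Y^{\circ}_v$ reduces to $\mathrm{pr}_P\circ \bar f$ being quasi-finite on $Y$ and non-constant on each irreducible component of $Y_v$; this is again a proper closed condition thanks to the hypothesis on $Y$.

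The principal obstacle is to find a single $P$ avoiding \emph{all} these bad loci simultaneously. On the generic fibre $K$ is infinite (since $\mathrm{char}\,K=0$), and a dimension count produces a good $K$-point of $H_\infty$. On the closed fibre, however, $k(v)$ may be finite and the naive generic-point argument fails; one must invoke a Poonen/Gabber-type Bertini theorem over finite residue fields, producing the ``center'' as (the common vanishing locus of) hypersurfaces of large degree whose restriction to each closed fibre lies in general position by a density/counting argument. Having fixed a good $P$, one defines $S\subseteq\P^{d-1}_A$ to be a sufficiently small affine open where all the above conditions hold, shrinks $X^{\circ}$ further to be affine, and verifies by construction that $\Pi\colon X^{\circ}\to\Aff^1_S$ is finite and surjective, with $q=\mathrm{pr}_1\circ\Pi$ smooth over $S$, and that $Z^{\circ}$ and $Y^{\circ}$ satisfy the stated finiteness conditions.
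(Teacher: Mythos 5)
You cannot be checked against the paper here: the paper explicitly postpones the proofs of the three geometric theorems of Section \ref{thms:geometric} to another preprint, so Theorem \ref{geom_pres_mixed_char} has no proof in this text. Judged on its own, your sketch follows the expected strategy (finite projection to $\mathbb{A}^d_A$, compactification, projection from a generically chosen center at infinity, Bertini-type input over the possibly finite residue field, in the spirit of \cite[Proposition 2.2.1]{C} which the paper names as its starting point), but it has a genuine gap at the heart of the statement: the simultaneous requirements that $X^{\circ}\subseteq X'$, that $q$ be smooth on \emph{all} of $X^{\circ}$, and that $\Pi\colon X^{\circ}\to\mathbb{A}^1_S$ be \emph{finite}. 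With your construction, $\Pi$ is (up to a linear change of coordinates) just the given finite map $f$ restricted over the cylinder $\mathrm{pr}_P^{-1}(S)$, so finiteness holds only for the \emph{full} preimage $f^{-1}(\mathbb{A}^1_S)$. The moment you intersect with $X'$ (or later ``shrink $X^{\circ}$ further'' to make $q$ smooth and $X^{\circ}$ affine), you pass to a strictly smaller open subset lying over $\mathbb{A}^1_S$, and an open subscheme of a finite $\mathbb{A}^1_S$-scheme is in general not finite over $\mathbb{A}^1_S$. Nor can you rescue this by shrinking $S$: the closed set $X\setminus X'$ is nonempty (it meets every component of $X_v$) and may well contain a horizontal divisor of $X$, whose image under $\mathrm{pr}_P\circ f$ is dense in $\mathbb{P}^{d-1}_A$ for every choice of $P$; the same applies to the non-smooth locus of $q$ if it happens to dominate the base. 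So no choice of $P$ and $S$ makes your $X^{\circ}:=(\bar X\setminus\bar f^{-1}(P))\cap X'\cap(\mathrm{pr}_P\circ f)^{-1}(S)$ finite over $\mathbb{A}^1_S$ in general. The standard way such presentation theorems resolve this tension is different: one arranges, by the choice of the center and of $S$, that the \emph{whole} boundary $\bar X\setminus X'$ (not only the part at infinity) becomes finite over $S$ after restriction — which requires both a quasi-finiteness condition (no fibre curve contained in the boundary) and a properness condition handling the indeterminacy along $\bar f^{-1}(P)$, since every compactified fibre curve passes through $\bar f^{-1}(P)$ — and then the finite surjection onto $\mathbb{A}^1_S$ is \emph{constructed} from the compactified relative curve with finite boundary (a section/line-bundle argument), rather than being a restriction of $f$. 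None of this is addressed in your sketch.

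Two further points need more than a gesture. First, your criterion for finiteness of $Z^{\circ}/S$ (``no component of $Z$ is contracted to lower dimension'') is neither the right condition nor sufficient: quasi-finiteness requires that no fibre of $q$ meet $Z$ in a positive-dimensional set, and finiteness additionally requires a properness-at-infinity condition, typically that the closure of $Z$ in $\bar X$ avoid $\bar f^{-1}(P)$ (possible because $\dim Z\le d-1$, but it must be imposed). Second, over a finite residue field $k(v)$ a rational center $P$ avoiding all the bad loci need not exist at all; invoking Poonen/Charles--Poonen or Gabber's high-degree-hypersurface trick is indeed the cure, but it replaces linear projections by projections of higher degree and therefore changes the very framework ($S\subseteq\mathbb{P}^{d-1}_A$, $\mathbb{A}^1$-fibres) you fixed at the outset; reconciling this with the stated form of the theorem (or reducing to an infinite residue field by a finite étale base change and descending) is part of the real work and is missing from the proposal.
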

Here are two consequences of this Theorem.

\begin{cor}\label{Homotopy_w_Traces}
Let $X$, $x_1,x_2,\dots,x_n \in X$, $X'\subseteq X$ and $Y\subset X'$ be as in Theorem \ref{geom_pres_mixed_char}.
Then there is
an affine Zariski open neighborhood $U'$ of points
$x_1,x_2,\dots,x_n \in X'$ and
a diagram of $A$-schemes and $A$-morphisms of the form
$$\Aff^1_{U'} \xleftarrow{\tau} \mathcal X \xrightarrow{p_X} X'$$
with an $A$-smooth irreducible scheme $\mathcal X$ and a finite surjective morphism $\tau$
such that the canonical sheaf \ $\omega_{\mathcal X/V}$ is isomorphic to the structure sheaf $\mathcal O_{\mathcal X}$.
Moreover, if we write $p: \mathcal X\to U'$ for the composite map $pr_{U'}\circ \tau$ and $\mathcal Y$ for $p^{-1}_X(Y)$, then these data
enjoy the following properties:\\
0) the closed subset $\mathcal Y$ of $\mathcal X$ is quasi-finite over $U'$;\\
1) there is a section $\Delta: U'\to \mathcal X$ of the morphism $p$ such that $\tau\circ \Delta=i_0$ and $p_X\circ \Delta=can$,
where $i_0$ is the zero section of $\Aff^1_{U'}$ and $can: U'\hookrightarrow X'$ is the inclusion;\\
2) for $\mathcal D_1:=\tau^{-1}(\{1\}\times U')$ one has $\mathcal D_1\cap \mathcal Y=\emptyset$; \\
3) for $\mathcal D_0:=\tau^{-1}(\{0\}\times U')$ one has $\mathcal D_0=\Delta(U')\sqcup \mathcal D'_0$ and $\mathcal D'_0\cap \mathcal Y=\emptyset$;\\
\end{cor}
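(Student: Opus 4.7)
The plan is to construct the diagram via a fiber-product, based on Theorem~\ref{geom_pres_mixed_char}. Apply that theorem to the data $(X, x_1, \ldots, x_n, X', Y)$ to obtain an affine open $X^\circ \subseteq X'$ containing the $x_i$, an open affine $S \subseteq \Pro^{d-1}_A$, a smooth morphism $q \colon X^\circ \to S$ of relative dimension one, and a finite surjective $S$-morphism $\Pi \colon X^\circ \to \Aff^1_S$ with $Y^\circ := Y \cap X^\circ$ quasi-finite over $S$. Choose an affine open neighborhood $U' \subseteq X^\circ$ of the $x_i$, let $\mathcal{X}_0 := X^\circ \times_S U'$ (fiber product along $q$ and $q|_{U'}$), and let $\mathcal{X}$ be the irreducible component of $\mathcal{X}_0$ containing the diagonal $\Delta(U') = \{(u,u) : u \in U'\}$. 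Both projections of $\mathcal{X}_0$ are base changes of the smooth map $q$, so $\mathcal{X}_0$ is $V$-smooth of relative dimension $d+1$, and so is its irreducible (equivalently: connected) component $\mathcal{X}$.

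Set $p_X := \pr_1 \colon \mathcal{X} \to X^\circ \hookrightarrow X'$, $\Delta(u) := (u, u)$, and
\[
\tau \colon \mathcal{X} \lra \Aff^1_{U'}, \qquad (x,u) \longmapsto (\Pi(x) - \Pi(u),\, u).
\]
The map $(t, u) \mapsto (t - \Pi(u), u)$ is an automorphism of $\Aff^1_{U'}$, so $\tau$ differs from the base change $\Pi \times_S \id_{U'}$ by this automorphism of the target and is therefore finite and surjective. Direct computation verifies properties 0) and 1): $\mathcal{Y} = Y^\circ \times_S U'$ is quasi-finite over $U'$ by base change; $\Delta$ is a closed section of the separated morphism $p = \pr_{U'} \circ \tau$; and $\tau \circ \Delta = i_0$, $p_X \circ \Delta = \can$.

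Properties 2) and 3) are arranged by shrinking $U'$. The condition $\mathcal{D}_1 \cap \mathcal{Y} = \emptyset$ amounts to $U' \cap \Pi^{-1}(\Pi(Y^\circ) - 1) = \emptyset$, and the splitting $\mathcal{D}_0 = \Delta(U') \sqcup \mathcal{D}_0'$ with $\mathcal{D}_0' \cap \mathcal{Y} = \emptyset$ requires $\Pi$ étale at each $x_i$ together with $U' \cap (\Pi^{-1}(\Pi(Y^\circ)) \setminus Y^\circ) = \emptyset$. Each of these excluded subsets is quasi-finite over $S$ (using quasi-finiteness of $Y^\circ/S$ and of the ramification of the generically étale $\Pi$), hence of codimension at least one in $X^\circ$ and avoidable by the $x_i$ after a mild general-position adjustment of $\Pi$ of the kind already built into Theorem~\ref{geom_pres_mixed_char}. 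Once $\Pi$ is étale on $U'$, $\mathcal{D}_0 \to U'$ is étale near $\Delta(U')$, so the section $\Delta(U')$ is a clopen component of $\mathcal{D}_0$ and the splitting follows.

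The main obstacle is the triviality of the canonical bundle $\omega_{\mathcal{X}/V}$, which is a priori only invertible. The semi-local scheme of $\mathcal{X}$ at the finite set $\tau^{-1}(\{0, 1\} \times \{x_1, \ldots, x_n\})$ is regular, so its Picard group vanishes and $\omega_{\mathcal{X}/V}$ is free on some open $T \subseteq \mathcal{X}$ containing this finite set. By finiteness of $\tau$, the image $\tau(\mathcal{X} \setminus T) \subseteq \Aff^1_{U'}$ is closed and avoids $\{0, 1\} \times \{x_1, \ldots, x_n\}$. Shrink $U'$ once more so that $\{0, 1\} \times U' \subseteq W_0 := \Aff^1_{U'} \setminus \tau(\mathcal{X} \setminus T)$, and replace $\mathcal{X}$ by $\tau^{-1}(W_0) \subseteq T$. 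This preserves finiteness of $\tau$ (as the preimage of an open), irreducibility (as a non-empty open in an irreducible scheme), and all previously verified properties, while giving $\omega_{\mathcal{X}/V} \cong \mathcal{O}_{\mathcal{X}}$.
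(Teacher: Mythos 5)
Your overall skeleton (pull back along the fiber product $X^{\circ}\times_S U'$ of the smooth relative curve $q$, use the shifted finite morphism $(x,u)\mapsto(\Pi(x)-\Pi(u),u)$, and take the diagonal as the section) is the standard one from the equicharacteristic literature, and the paper itself gives no proof of this corollary (it is postponed to another preprint), so I can only judge your argument on its own terms. Unfortunately it has two genuine gaps. The more clear-cut one is the last step, where you trivialize $\omega_{\mathcal X/V}$ by replacing $\mathcal X$ with $\tau^{-1}(W_0)$ for $W_0=\Aff^1_{U'}\setminus\tau(\mathcal X\setminus T)$: the assertion that this ``preserves finiteness of $\tau$ (as the preimage of an open)'' is false. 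The preimage of an open set under a finite morphism is finite \emph{over that open set}, not over $\Aff^1_{U'}$; since $W_0$ is in general a proper open subset (the non-free locus of $\omega_{\mathcal X/V}$ is typically a divisor, so $\tau(\mathcal X\setminus T)$ meets $\Aff^1_{U''}$ for every shrinking $U''\ni x_i$), the composite $\tau^{-1}(W_0)\to\Aff^1_{U'}$ is neither proper nor surjective, and arranging only $\{0,1\}\times U'\subseteq W_0$ does not help. The only shrinkings of $\mathcal X$ compatible with keeping $\tau$ finite surjective onto an affine line over the (possibly shrunken) base are of the form $p^{-1}(U'')$, and there is no reason the non-triviality locus of $\omega_{\mathcal X/V}$ can be avoided that way. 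Triviality of the canonical sheaf is one of the genuinely hard extra outputs of this corollary (it is what later feeds the Euler-trace argument in the proof of Theorem \ref{cousin_BW}); it has to be built into the construction of $\mathcal X$, not obtained by excision afterwards.

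The second gap is in properties 2) and 3). With your choice of $\tau$, property 2) amounts to $x_i\notin\Pi^{-1}\bigl(\Pi(Y^{\circ})-1\bigr)$, and property 3) requires both that the fibre $\Pi^{-1}(\Pi(u))$ meet $Y^{\circ}$ only possibly at $u$ for all $u\in U'$, and that $\Delta(U')$ be open in $\mathcal D_0$, which with $f=\Pi(x)-\Pi(u)$ forces $\Pi$ to be unramified along $U'$ (otherwise off-diagonal points of $\mathcal D_0$ accumulate on $\Delta(U')$ and the clopen splitting $\mathcal D_0=\Delta(U')\sqcup\mathcal D_0'$ fails). None of these conditions is supplied by Theorem \ref{geom_pres_mixed_char}, which gives a fixed $q$ and $\Pi$ with the stated properties and no stated freedom to adjust them; and they cannot be achieved merely by shrinking $U'$, since they are conditions at the points $x_i$ themselves (e.g.\ if some $x_i\in Y$ and another point of $Y^{\circ}$ lies in the same $\Pi$-fibre, no neighborhood of $x_i$ helps). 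Your appeal to ``a mild general-position adjustment of $\Pi$ of the kind already built into Theorem \ref{geom_pres_mixed_char}'' is exactly the missing content: such general-position arguments are the delicate point in mixed characteristic, especially with finite residue field, which is why the paper states Corollary \ref{Homotopy_w_Traces} separately from Corollary \ref{Homotopy_w_Traces_2} (the latter assuming $k(v)$ infinite). A smaller, fixable omission: after passing to the irreducible component $\mathcal X$ through the diagonal, surjectivity of $\tau|_{\mathcal X}$ no longer follows from the base-change description and needs a dimension argument (it is true, since $\tau(\mathcal X)$ is closed irreducible of dimension $\dim U'+1=\dim\Aff^1_{U'}$, but you should say so).
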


\begin{cor}\label{Homotopy_w_Traces_2}
Let $X$, $x_1,x_2,\dots,x_n \in X$, $X'\subseteq X$ and $Y\subset X'$ be as in Theorem \ref{geom_pres_mixed_char}.
Suppose the field $k(v)$ is infinite.
Then there is
an affine Zariski open neighborhood $U'$ of points
$x_1,x_2,\dots,x_n \in X'$ and
a diagram of $A$-schemes and $A$-morphisms of the form
$$\Aff^1_{U'} \xleftarrow{\tau} \mathcal X \xrightarrow{p_X} X'$$
with an $A$-smooth irreducible scheme $\mathcal X$ and a finite surjective morphism $\tau$
such that the canonical sheaf \ $\omega_{\mathcal X/V}$ is isomorphic to the structure sheaf $\mathcal O_{\mathcal X}$.
Moreover, if we write $p: \mathcal X\to U'$ for the composite map $pr_{U'}\circ \tau$ and $\mathcal Y$ for $p^{-1}_X(Y)$, then these data
enjoy the following properties:\\
0) the closed subset $\mathcal Y$ of $\mathcal X$ is quasi-finite over $U'$;\\
1) there is a section $\Delta: U'\to \mathcal X$ of the morphism $p$ such that $\tau\circ \Delta=i_0$ and $p_X\circ \Delta=can$,
where $i_0$ is the zero section of $\Aff^1_{U'}$ and $can: U'\hookrightarrow X'$ is the inclusion;\\
2) the morphism $\tau$ is \'{e}tale as over $\{0\}\times U'$, so over $\{1\}\times U'$;\\
3) for $\mathcal D_1:=\tau^{-1}(\{1\}\times U')$ one has $\mathcal D_1\cap \mathcal Y=\emptyset$; \\
4) for $\mathcal D_0:=\tau^{-1}(\{0\}\times U')$ one has $\mathcal D_0=\Delta(U')\sqcup \mathcal D'_0$ and $\mathcal D'_0\cap \mathcal Y=\emptyset$;\\
5) let $(P',\varphi')$ be a quadratic space over $X'$ and $(P,\varphi)=can^*(P',\varphi')$, then one can construct the above diagram
such that the properties (1) to (4) does hold and the quadratic spaces $p^*_{U'}(P,\varphi)$ and $p^*_X(P',\varphi')$ are isomorphic.
\end{cor}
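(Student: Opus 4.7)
The plan is to deduce the corollary from Theorem~\ref{geom_pres_mixed_char} by enhancing the construction underlying Corollary~\ref{Homotopy_w_Traces}, adding the two new features that require $k(v)$ to be infinite, namely the étaleness in~(2) and the isometry of quadratic spaces in~(5). First I would apply Theorem~\ref{geom_pres_mixed_char} to produce $X^{\circ}\subseteq X'$, an affine open $S\subseteq \Pro^{d-1}_A$, a smooth $A$-morphism $q\colon X^{\circ}\to S$ and a finite surjective $S$-morphism $\Pi=(q,f)\colon X^{\circ}\to\Aff^1_S$ with $Y^{\circ}:=Y\cap X^{\circ}$ quasi-finite over $S$. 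Because $k(v)$ is infinite, a Bertini-type perturbation $f\rightsquigarrow f+q^{*}\ell$ with $\ell$ a generic linear form on $S$ subject to $\ell(q(x_i))=0$ makes $\Pi$ étale at every $x_i$ and at every point of $\Pi^{-1}(\{f(x_i)+1\}\times\{q(x_i)\})$. Picking an affine open $U'\subseteq X^{\circ}$ containing the $x_i$, I would then set
\[
\mathcal X := U'\times_S X^{\circ},\quad \tau(u',x):=(u',\,f(x)-f(u')),\quad \Delta(u'):=(u',u'),
\]
together with $p_X:=\can\circ \pr_2$ and $p:=\pr_1$.

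The scheme $\mathcal X$ is $A$-smooth and irreducible (base change of $q$); the morphism $\tau$ is finite surjective (base change of $\Pi$ composed with an affine translation of the $\Aff^1$-coordinate); and $\tau\circ\Delta=i_0$, $p_X\circ\Delta=\can$ are tautological, yielding~(1). Triviality of $\omega_{\mathcal X/V}$ follows, exactly as for Corollary~\ref{Homotopy_w_Traces}, from the isomorphism $\omega_{\mathcal X/V}\simeq \pr_1^{*}\omega_{U'/V}\otimes \pr_2^{*}\omega_{X^{\circ}/S}$ together with the triviality of each factor on the semi-localization near $x_1,\dots,x_n$. Property~(0) is the quasi-finiteness of $\mathcal Y=U'\times_S Y^{\circ}\to U'$, inherited from $Y^{\circ}/S$; a Zariski shrinking of $U'$ separates $\mathcal Y$ from both $\Delta(U')$ and $\mathcal D_1$, giving~(3) and~(4) with $\mathcal D_0':=\mathcal D_0\setminus\Delta(U')$. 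Property~(2) is then immediate from the Bertini-adjusted choice of $f$, since étaleness is open on the source and by design $\tau$ is étale at every $\Delta(x_i)$ and at every point of $\tau^{-1}(\{1\}\times\{x_i\})$.

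For~(5), both $p_X^{*}(P',\varphi')$ and $p_{U'}^{*}(P,\varphi)=p_{U'}^{*}\can^{*}(P',\varphi')$ are quadratic spaces on the $A$-smooth scheme $\mathcal X$ that coincide along the section $\Delta(U')$, where each restricts to $(P,\varphi)$. Since $p\colon\mathcal X\to U'$ is smooth of relative dimension one with the section $\Delta$, and $U'$ is semilocal with infinite residue fields, I would invoke an Ojanguren-style patching argument: the two spaces differ by an element of $O\bigl(p_X^{*}(P',\varphi')\bigr)(\mathcal X)$ which is the identity along $\Delta(U')$, and a sufficient supply of elementary/unipotent orthogonal transformations rectifies this element to the identity after a final Zariski shrinking of $U'$. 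I expect this last step to be the main obstacle, since it is where quadratic-form theory rather than bare geometry enters; this is also the reason the infinitude of $k(v)$ is essential and cannot be avoided as in Corollary~\ref{Homotopy_w_Traces}.
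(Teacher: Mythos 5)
First, a caveat: the paper itself contains no proof of Corollary \ref{Homotopy_w_Traces_2} --- the proofs of the geometric statements of Section \ref{thms:geometric} are explicitly postponed to another preprint --- so your proposal can only be judged on its own merits. Its overall strategy (apply Theorem \ref{geom_pres_mixed_char}, base change the relative curve $q\colon X^{\circ}\to S$ along $U'\to S$ to get $\mathcal X=U'\times_S X^{\circ}$ with the diagonal section $\Delta$, and a finite morphism to $\Aff^1_{U'}$ induced by $\Pi$) is the standard Quillen--Ojanguren--Panin scheme and is surely what the author intends. The execution, however, has genuine gaps.

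The main one: your $\tau(u',x)=(u',f(x)-f(u'))$, with $f$ pulled back from $X^{\circ}$, has no reason to satisfy (2), (3), (4), and your two proposed fixes do not work. The perturbation $f\mapsto f+q^{*}\ell$ with $\ell(q(x_i))=0$ is \emph{constant} on the fibre $q^{-1}(q(x_i))$, hence changes neither the fibrewise differential of $f$ nor the values of $f$ on that fibre; so it can neither force \'{e}taleness of $\tau$ at the points of $\mathcal D_0\cup\mathcal D_1$ lying over $x_i$ nor rule out points $y\in Y^{\circ}$ in that fibre with $f(y)=f(x_i)$ or $f(y)=f(x_i)+1$. Likewise, the claim that a Zariski shrinking of $U'$ ``separates $\mathcal Y$ from $\Delta(U')$ and $\mathcal D_1$'' is false: $Y$ may contain some of the $x_i$, so $\mathcal Y$ always meets $\Delta(U')$ (the statement only demands $\mathcal D'_0\cap\mathcal Y=\emptyset$), and shrinking $U'$ helps only if the offending closed loci have empty fibres over the $x_i$ --- which is exactly what has to be arranged and is not arranged by your choice of $\tau$. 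The known arguments construct a \emph{new} function on $\mathcal X$ (not a pullback from $X^{\circ}$), with prescribed jets along $\Delta(U')$ and prescribed values/invertibility on $\mathcal Y$ and at the finitely many bad points, by prime avoidance over the semi-local $U'$ while keeping $(p,f)\colon\mathcal X\to\Aff^1_{U'}$ finite; this is where the infinitude of $k(v)$ really enters, and it is the actual content of the corollary. Two further gaps: trivializing $\omega_{\mathcal X/V}$ ``by shrinking'' fails, because the factor coming from $\omega_{X^{\circ}/S}$ cannot be trivialized by shrinking $X^{\circ}$ without destroying the finiteness of $\Pi$; and property (5) is not proved at all --- you defer it to an unspecified Ojanguren-style patching, whereas (as in Panin--Pimenov) the diagram is to be constructed \emph{depending on} $(P',\varphi')$, i.e.\ the isometry $p^{*}_{U'}(P,\varphi)\cong p^{*}_X(P',\varphi')$ must be built into the construction, and this is precisely the hard step rather than an afterthought.
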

One more consequence of the Theorem \ref{geom_pres_mixed_char} is the following moving lemma.

\begin{thm}[A moving lemma]
\label{An extended_moving_lemma}
Let $X$, $x_1,x_2,\dots,x_n \in X$, $X'\subseteq X$ and $Z\subset X'$ be as in Theorem \ref{geom_pres_mixed_char}.
Let $c \geq 2$ be an integer and suppose
$Z$ has pure codimension $c$ in $X'$.
Then there are an affine Zariski open neighborhood $U'$ of points
$x_1,x_2,\dots,x_n \in X'$
and a closed subset $Z^{new}$ in $X'$ containing $Z$ of pure codimension $c-1$
and a morphism of pointed Nisnevich sheaves
$$\Phi_t: \Aff^1 \times U'/(U'-Z^{new}) \to X'/(X'-Z)$$
in the category $Shv_{nis}(Sm/V)$
such that for $\Phi_0=\Phi\circ i_0$ and $\Phi_1=\Phi\circ i_1$ one has:\\
(1) $\Phi_0: U'/(U'-Z^{new})\to X'/(X'-Z)$ is the composite morphism \\
$U'/(U'-Z^{new})\xrightarrow{can} X'/(X'-Z^{new})\xrightarrow{p} X'/(X'-Z)$;\\
(2)
$\Phi_1: U'/(U'-Z^{new})\to X'/(X'-Z)$ takes everything
to
the point $*$ in $X'/(X'-Z)$.
\end{thm}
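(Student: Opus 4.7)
The plan is to invoke Theorem~\ref{geom_pres_mixed_char}, define $Z^{new}$ as the $q$-preimage of the image of $Z^{\circ}$ in the base $S$, build an auxiliary correspondence $\mathcal{X}$ by linearly contracting the $\Aff^1_S$-coordinate of $\Pi$, and extract $\Phi$ by inverting $\tau$ Nisnevich-locally around a diagonal component of $\mathcal{X}$. Theorem~\ref{geom_pres_mixed_char} produces $X^{\circ} \subseteq X'$, an affine open $S \subset \P^{d-1}_A$, a smooth $q: X^{\circ} \to S$ of relative dimension $1$, and a finite surjective $S$-morphism $\Pi: X^{\circ} \to \Aff^1_S$ such that $Z^{\circ} := Z \cap X^{\circ}$ is finite over $S$. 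Since $Z^{\circ}$ has pure codimension $c$ in $X^{\circ}$ and is finite over $S$, its image $S_T := q(Z^{\circ})$ is closed of pure codimension $c-1$ in $S$. Put $Z^{new} := \overline{q^{-1}(S_T)}^{X'}$; smoothness of $q$ of relative dimension $1$ yields pure codimension $c-1$ in $X'$; clearly $Z \subseteq Z^{new}$ and $Z^{new}$ does not contain any component of $X'_v$ (as $c-1 \geq 1$).

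By the shift $\Pi_1 \mapsto \Pi_1 - a$ for a suitable $a \in A$, arrange $Z_0 := \Pi^{-1}(\{0\} \times S)$ to be disjoint from $Z^{\circ}$. Form the fiber product $\mathcal X := (\Aff^1 \times X^{\circ}) \times_{\Aff^1_S} X^{\circ}$ using $(t, x) \mapsto ((1-t)\Pi_1(x), q(x))$ on the first factor and $\Pi$ on the second, with projections $\tau: \mathcal X \to \Aff^1 \times X^{\circ}$ (finite surjective, as a base change of $\Pi$) and $p_X: \mathcal X \to X^{\circ} \hookrightarrow X'$. The diagonal $\Delta(x) := (0, x, x)$ lies in $\mathcal X$ and satisfies $p_X \circ \Delta = can$, $\tau \circ \Delta = i_0$. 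Two key properties: (i) $p_X(\mathcal X|_{t=1}) \cap Z = \emptyset$, since $\mathcal X|_{t=1}$ forces $\Pi(x_2) = (0, q(x_1))$, so $x_2 \in Z_0$, disjoint from $Z^{\circ}$; (ii) $\tau(p_X^{-1}(Z)) \subseteq \Aff^1 \times Z^{new}$, since $(t, x_1, x_2) \in p_X^{-1}(Z)$ forces $x_2 \in Z^{\circ}$ and $q(x_1) = q(x_2) \in S_T$, so $x_1 \in q^{-1}(S_T) \subseteq Z^{new}$.

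Set $U' := X^{\circ}$ (shrunk around the $x_i$'s if needed). Property (ii) yields a morphism of pointed Nisnevich sheaves $\overline{p_X}: \mathcal X/(\mathcal X - \tau^{-1}(\Aff^1 \times Z^{new})) \to X'/(X'-Z)$, while $\tau$ induces $\overline{\tau}: \mathcal X/(\mathcal X - \tau^{-1}(\Aff^1 \times Z^{new})) \to (\Aff^1 \times U')/(\Aff^1 \times (U'-Z^{new}))$. Define $\Phi := \overline{p_X} \circ \overline{\tau}^{-1}$ once $\overline{\tau}$ is inverted in the category of pointed Nisnevich sheaves. Property (1) of the theorem follows from $p_X \circ \Delta = can$ at $t = 0$: the diagonal component of $\mathcal X|_{t=0}$ maps identically onto the canonical embedding $U' \hookrightarrow X'$, so $\Phi_0$ coincides with the composite $U'/(U'-Z^{new}) \to X'/(X'-Z^{new}) \to X'/(X'-Z)$. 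Property (2) follows from (i) at $t = 1$: $p_X$ sends $\mathcal X|_{t=1}$ into $X'-Z$, so $\Phi_1$ factors through the basepoint.

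\emph{Main obstacle.} The crux is the inversion of $\overline{\tau}$, which is a Nisnevich excision for the finite surjective map $\tau$ along the preimage of $\Aff^1 \times Z^{new}$. One extends $\Delta$ by Hensel's lemma to an étale section of $\tau$ over a Nisnevich neighborhood of $\Aff^1 \times Z^{new}$ in $\Aff^1 \times U'$, splits off the corresponding ``diagonal component'' $\mathcal X^{\Delta} \subset \mathcal X$, and shows by a further shrinking of $U'$ that the complementary components of $\tau^{-1}(\Aff^1 \times Z^{new})$ avoid $p_X^{-1}(Z)$; one then replaces $\mathcal X$ by $\mathcal X^{\Delta}$, on which $\overline{\tau}$ becomes a Nisnevich-local isomorphism. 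This Hensel-plus-excision step mirrors the argument underlying Corollary~\ref{Homotopy_w_Traces} and is where the technical weight of the proof concentrates.
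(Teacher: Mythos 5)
First, a point of comparison: the paper itself contains no proof of Theorem \ref{An extended_moving_lemma} --- the introduction explicitly postpones the proofs of the geometric statements of Section \ref{thms:geometric} (including this moving lemma) to another preprint --- so your argument can only be judged on its own merits, and as it stands it has two genuine gaps.

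The central gap is the inversion of $\overline{\tau}$. Your $\tau$ is the base change along $(t,x)\mapsto((1-t)\Pi_1(x),q(x))$ of the morphism $\Pi$, and Theorem \ref{geom_pres_mixed_char} guarantees only that $\Pi$ is \emph{finite surjective}; it gives no \'{e}taleness of $\Pi$ anywhere, in particular not along $Z^{\circ}$, not at the points $x_i$, and hence not along your diagonal at $t=0$ nor along $\tau^{-1}(\Aff^1\times Z^{new})$. To invert $\overline{\tau}$ as a map of pointed Nisnevich sheaves you need an elementary distinguished square: an open piece of $\mathcal X$ on which $\tau$ is \'{e}tale and which maps the relevant closed subset isomorphically onto $\Aff^1\times Z^{new}$, with the remaining preimage separated off. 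Your appeal to ``Hensel's lemma'' cannot supply this: $\Delta$ is a section of $\tau$ only over $\{0\}\times U'$, Hensel's lemma lifts sections along \'{e}tale (or smooth) maps over henselian thickenings and cannot produce a section of a finite, possibly everywhere ramified, morphism over the two-or-more–dimensional locus $\Aff^1\times Z^{new}$ (over $t\neq 0$ the fibres of $\tau$ are just fibres of $\Pi$ and need not contain any distinguished or even rational point), and where a section does exist $\tau$ need not be \'{e}tale along it. Producing a finite morphism to $\Aff^1_{U'}$ that \emph{is} \'{e}tale along the relevant locus, with the diagonal split off and the other components of the preimage avoiding $p_X^{-1}(Z)$, is exactly the hard content of such moving lemmas (compare the carefully engineered properties (2)--(4) of Corollary \ref{Homotopy_w_Traces_2}, which already require extra constructions and the hypothesis that $k(v)$ is infinite); it is not obtained by base-changing $\Pi$.

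A second, independent gap is the claim that a shift $\Pi_1\mapsto\Pi_1-a$ with a constant $a\in A$ makes $Z_0=\Pi^{-1}(\{0\}\times S)$ disjoint from $Z^{\circ}$. For $c\leq d$ the closed set $Z^{\circ}$ is positive-dimensional, it is finite over $S$ but $\Pi_1|_{Z^{\circ}}$ is in no way controlled and may well be dominant onto $\Aff^1$, so no constant value can be avoided on all of $Z^{\circ}$ (and over the part of $S$ hit by $q(U')$ you cannot discard it by shrinking $U'$, since the second coordinate of $\mathcal X$ ranges over entire fibres of $\Pi$); when the residue field is finite even pointwise avoidance arguments with constants break down. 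Smaller blemishes: the inclusion $Z\subseteq Z^{new}$ does hold, but only because every irreducible component of $Z$ contains some $x_i\in X^{\circ}$ and is therefore the closure of its trace on $X^{\circ}$ --- this deserves a sentence rather than ``clearly''; and the parenthetical claim that $Z^{new}$ contains no component of $X'_v$ ``as $c-1\geq 1$'' is both unnecessary for the statement and not implied by codimension $\geq 1$. In summary, the reduction to Theorem \ref{geom_pres_mixed_char} and the formal frame (define $Z^{new}$ via $q(Z^{\circ})$, contract the $\Aff^1_S$-coordinate, check the end conditions at $t=0,1$) are reasonable, but the two steps above, which carry the actual weight of the theorem, are not proved and do not follow by the tools you cite.
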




\section{General results on Cousin complexes}
Let $A$, $p>0$, $d\geq 1$, $X$, $x_1,x_2,\dots,x_n\in X$, $\mathcal O$ and $U$  be as in Section \ref{agreements}.

\begin{defn}\label{presh:contenious}
One says that a presheaf $G$ on $Sm'/V$ is continuous
if for each $Y\in Sm'/V$, each left filtering system $(Y_i)_{i\in I}$
as in section \ref{agreements}
and each $V$-scheme isomorphism $Y\to \lim_{i\in I} Y_i$
the map
$$\text{colim}_{i\in I}G(Y_i)\to G(Y)$$ is an isomorphism.
\end{defn}

\begin{defn}\label{def:Z_graded}
Let $E: (Sm'Op/V)^{op} \to Ab$ together with
$$(X,W)\mapsto [\partial_{X,W}: E(W)\to E(X,W)]$$
be a cohomology theory on the category $Sm'Op/V$
in the sense of \cite[Definition 2.1]{PW} (inspired by \cite[Definition 2.1]{PSm}).
One says that $E$ is $\mathbb Z$-graded, if for each pair $(X,W)\in Sm'Op/V$
the group
$E(X,W)$ is a graded abelian group $\oplus E^n(X,W)$ and the differential
$\partial_{X,W}: E(W)\to E(X)$
is a graded abelian group homomorphism of degree $+1$.

It is convenient to suppose in this paper that all cohomology theories are
$\mathbb Z$-graded.
\end{defn}

\begin{defn}\label{contenious}
One says that a $\mathbb Z$-graded cohomology theory $(E,\partial)$ is continuous if
for each integer $n$ the presheaf $E^n$ on $Sm'/V$ is continuous.
\end{defn}

\begin{agr}\label{theory:cont}
Over this preprint each $\mathbb Z$-graded cohomology theory $(E,\partial)$
with the character "E" on the first place is
supposed to be continuous.
\end{agr}

\begin{rem}
It turns out that all specific $\mathbb Z$-graded cohomology theory regarded
in this preprint are continuous.
\end{rem}
Let $z\in U$ be a point. We write in this preprint
$E^m_z(U)$ for $E^m_z(Spec~\mathcal O_{X,z})=E^m_z(Spec~\mathcal O_{U,z})$.
For each integers $m$ and $c$ with $c\geq 0$ and each codimention $c$ point $z\in U$ abelian groups
$E^m_{\geq c}(U)$, $E^m_{(c)}(U)$ and maps
$\partial: E^m_{(c)}(U)\to E^{m+1}_{\geq c+1}(U)$ are defined
in \cite[Section 9]{Pan0}.
By the abuse of notation the same symbol $\partial$ is used there to denote the composite map
$E^m_{(c)}(U)\xrightarrow{\partial} E^{m+1}_{\geq c+1}(U)\to E^{m+1}_{(c+1)}(U)$.
By the construction
$0=\partial \circ \partial: E^m_{(c)}(U)\to E^{m+2}_{(c+2)}(U)$. Combining all these together
the complex of the form
\begin{equation}\label{Cousin_prelim}
0\to E^m(U) \xrightarrow{\eta^*} E^m_{(0)}(U) \xrightarrow{\partial} E^{m+1}_{(1)}(U) \xrightarrow{\partial} ... \xrightarrow{\partial}
E^{m+d+1}_{(d+1)}(U) \to 0
\end{equation}
is defined in \cite[Section 9, (8)]{Pan0}.
Using the excision property of $(E,\partial)$ the equalities
$E^m_{(c)}(U)=\oplus_{z\in U^{(c)}} E^{m}_{z}(U)$ are explained in \cite[Section 9, (8)]{Pan0}.
In this way the complex
\begin{equation}\label{thm:Cousin_Exact}
0\to E^m(U) \xrightarrow{\eta^*} E^m(\eta) \xrightarrow{\partial} \oplus_{y\in U^{(1)}} E^{m+1}_{y}(U) \xrightarrow{\partial} ... \xrightarrow{\partial}
\oplus_{x\in U^{(d+1)}}E^{m+d+1}_{x}(U) \to 0
\end{equation}
is constructed in \cite[Section 9, (8)]{Pan0}.

\begin{notation}\label{General_Cousin}
Write $Cous(E,U;m)$ for the complex \eqref{thm:Cousin_Exact}.
\end{notation}

\begin{thm}\label{General_Purity}
Let $m$ and $c$ be an integers and $c\geq 0$. Then \\
(a) the complex \eqref{thm:Cousin_Exact} is exact except possibly the terms
$E^m(U)$ and $\oplus_{y\in U^{(1)}} E^{m+1}_{y}(U)$; \\
(b) particularly, the complex \eqref{thm:Cousin_Exact}
is exact at the term $E^m(\eta)$;\\
(c) for each integer $m$ and $c\geq 1$ the map $E^{m}_{\geq c+1}(U)\to E^m_{\geq c}(U)$ vanishes;\\
(d) for each integer $m$ and $c\geq 1$ the sequence
$0\to E^m_{\geq c}(U)\to E^m_{(c)}(U)\xrightarrow{\partial} E^{m+1}_{\geq c+1}(U)\to 0$
is short exact.
\end{thm}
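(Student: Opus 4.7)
The plan is to concentrate all geometric content in part (c), obtained from the moving lemma plus $\Aff^1$-invariance, and then extract (d), (a), (b) by purely formal manipulation of the long exact sequence of the coniveau filtration. Theorem \ref{An extended_moving_lemma} and the standard axioms (excision, localization, $\Aff^1$-invariance) of a cohomology theory on $Sm'Op/V$ will be the only inputs.

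\medskip

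For (c), I would start from a class $\alpha \in E^m_{\ge c+1}(U)$ with $c \ge 1$. Continuity and excision present $\alpha$ as the image of a class on some affine neighborhood $X'$ of $x_1,\dots,x_n$ supported on a closed $Z \subseteq X'$ of codimension $\ge c+1$; after a short clean-up (taking codimension-$(c+1)$ closures of the components and shrinking $X'$ to discard components of $Z$ missing all $x_i$) one arranges $Z$ of pure codimension $c+1$ with every component passing through some $x_i$. Now Theorem \ref{An extended_moving_lemma} applies to $Z$ (legal since $c+1 \ge 2$) and produces $Z^{new} \supseteq Z$ of pure codimension $c$ together with a pointed Nisnevich homotopy
$$\Phi_t \colon \Aff^1 \times U'/(U'-Z^{new}) \to X'/(X'-Z),$$
with $\Phi_0 = p \circ can$ and $\Phi_1 = *$. $\Aff^1$-invariance forces $\Phi_0^* = \Phi_1^* = 0$ as maps $E^m_Z(X') \to E^m_{Z^{new}}(U')$, and $\Phi_0^*$ is precisely the natural ``enlarge-support then restrict'' composite $E^m_Z(X') \to E^m_{Z^{new}}(X') \to E^m_{Z^{new}}(U')$, so the image of $\alpha$ in $E^m_{Z^{new}}(U) \subseteq E^m_{\ge c}(U)$ vanishes.

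\medskip

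The remaining parts fall out of the long exact sequence
$$\cdots \to E^m_{\ge c+1}(U) \to E^m_{\ge c}(U) \to E^m_{(c)}(U) \xrightarrow{\partial} E^{m+1}_{\ge c+1}(U) \to E^{m+1}_{\ge c}(U) \to \cdots$$
associated with the pair $(E^*_{\ge c+1}, E^*_{\ge c})$. Part (d), $c \ge 1$, is immediate once (c) kills the flanking arrows. Part (a) at Cousin position $2 \le c \le d+1$ follows by writing the Cousin differential $E^{m+c}_{(c)} \to E^{m+c+1}_{(c+1)}$ as $\partial$ followed by the injection $E^{m+c+1}_{\ge c+1} \hookrightarrow E^{m+c+1}_{(c+1)}$ supplied by (d): its kernel is $E^{m+c}_{\ge c}$, and the image of the previous differential is the image of the surjection $\partial \colon E^{m+c-1}_{(c-1)} \twoheadrightarrow E^{m+c}_{\ge c}$ (again from (d)) embedded in $E^{m+c}_{(c)}$, hence also $E^{m+c}_{\ge c}$; at the final term $c = d+1$ there are no higher-codimension points in $U$, so $E^\bullet_{\ge d+1} = E^\bullet_{(d+1)}$ and surjectivity of the last Cousin differential again follows from (d). Part (b) is the same device at position $0$: the Cousin differential out of $E^m(\eta) = E^m_{(0)}(U)$ is $\partial_0$ followed by the injection $E^{m+1}_{\ge 1} \hookrightarrow E^{m+1}_{(1)}$ from (d), so its kernel equals $\ker \partial_0$, which is the image of $E^m(U) \to E^m(\eta)$ by the long exact sequence.

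\medskip

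The main obstacle is fully absorbed into Theorem \ref{An extended_moving_lemma}, whose proof is postponed to a later paper; assuming it, the only step requiring genuine care is the lifting at the start of (c), namely arranging that the representative of $\alpha$ lives on some $X'$ with support of pure codimension $c+1$ whose components each meet one of the $x_i$. Everything else is a mechanical application of the coniveau long exact sequence and the injection/surjection pattern provided by (d).
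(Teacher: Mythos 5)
Your overall strategy coincides with the paper's: all geometric content is concentrated in the vanishing of the support-extension maps in (c), obtained from Theorem \ref{An extended_moving_lemma} together with strict homotopy invariance of $E$ with supports, and (d), (b), (a) are then extracted purely formally from the coniveau exact couple, exactly as in the paper (your splicing argument for (a) is just a spelled-out version of the paper's one-line deduction, and it is correct).

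However, your reduction in (c) has a genuine gap at the lifting step. Theorem \ref{An extended_moving_lemma} requires its data to be as in Theorem \ref{geom_pres_mixed_char}; in particular the open $X'$ carrying the lifted class must contain \emph{all generic points of $X_v$} (while containing no whole irreducible component of $X_v$). Continuity only produces a lift of $\alpha$ over \emph{some} Zariski neighborhood of $x_1,\dots,x_n$, and such a neighborhood may well miss a generic point of $X_v$, since nothing forces every component of $X_v$ to contain one of the points $x_i$. Your ``clean-up'' only shrinks $X'$ (to fix the support $Z$), and shrinking can never recover a missing generic point, while enlarging $X'$ is not available because the lifted class need not extend. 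The paper's proof contains precisely the device you omit: one first chooses additional closed points $x_{n+1},\dots,x_M$ so that every irreducible component of $X_v$ contains one of them and so that $Z$ remains closed in the larger semi-local scheme $U''=\Spec~\mathcal O_{X,x_1,\dots,x_M}$; excision for the embedding $U\hookrightarrow U''$ transports $a$ to $a''\in E^m_Z(U'')$; and then \emph{any} neighborhood $X''$ of $x_1,\dots,x_M$ automatically contains every generic point of $X_v$ (each component of $X_v$ is irreducible and meets $X''$ in a nonempty open), so the lift $\tilde a''$ provided by continuity sits on a legitimate $X''$, the moving lemma is applied over $U''$, and the resulting vanishing of the support extension is finally restricted back to $U$. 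Without this enlargement-plus-excision step your invocation of Theorem \ref{An extended_moving_lemma} is not justified, and this is the one non-mechanical point of the whole reduction.
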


\begin{rem}\label{General_Purity_2}
The item (b) states the following.
Let $\alpha\in E^m(\eta)$ be an element such that for each codimension one point $y\in U$
it can be lifted to an element $\alpha_y \in E^m(\mathcal O_{X,y})$. Then $\alpha$ can be lifted to an element
$E^m(\mathcal O)$. So,
{\it purity holds for the presheaf $E^m$ on $Sm/V$ and the ring $\mathcal O$.
}
\end{rem}

\begin{thm}\label{cousin_1}
If the maps $\eta^*: E^i(U)\to E^i(\eta)$ are injective for $i=m$ and $i=m+1$, then the Cousin complex
$Cous(E,U;m)$ is exact.
\end{thm}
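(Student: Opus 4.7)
The approach is to use Theorem~\ref{General_Purity} to reduce exactness to just two positions, dispose of one via the hypothesis, and close the other with a long exact sequence argument.

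By Theorem~\ref{General_Purity}(a) the complex $Cous(E,U;m)$ is exact at every position except possibly $E^m(U)$ and $\oplus_{y\in U^{(1)}} E^{m+1}_y(U)$. Since the complex begins with $0\to E^m(U)$, exactness at $E^m(U)$ is precisely injectivity of $\eta^*\colon E^m(U)\to E^m(\eta)$, which is supplied by the hypothesis for $i=m$. So the whole task reduces to verifying exactness at $\oplus_{y\in U^{(1)}} E^{m+1}_y(U)$.

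To handle that spot, I will describe the kernel and image of the two surrounding differentials intrinsically. Theorem~\ref{General_Purity}(d) for $c=2$ shows that the outgoing Cousin differential $E^{m+1}_{(1)}(U)\to E^{m+2}_{(2)}(U)$ factors through the injection $E^{m+2}_{\geq 2}(U)\hookrightarrow E^{m+2}_{(2)}(U)$, while the same part for $c=1$ identifies its kernel with $E^{m+1}_{\geq 1}(U)\subseteq E^{m+1}_{(1)}(U)$. Similarly, the incoming differential $E^m(\eta)\to E^{m+1}_{(1)}(U)$ factors by the construction recalled before Notation~\ref{General_Cousin} as $E^m(\eta)\xrightarrow{\delta} E^{m+1}_{\geq 1}(U)\hookrightarrow E^{m+1}_{(1)}(U)$. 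Hence exactness at this spot is equivalent to surjectivity of $\delta$.

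The final step is to invoke the localization long exact sequence of the theory $(E,\partial)$, obtained by taking the colimit of cohomology-with-support sequences over closed subsets of codimension $\geq 1$ (legitimate thanks to Agreement~\ref{theory:cont}):
$$\cdots\to E^m(\eta)\xrightarrow{\delta} E^{m+1}_{\geq 1}(U)\xrightarrow{i_*} E^{m+1}(U)\xrightarrow{\eta^*} E^{m+1}(\eta)\to\cdots.$$
Exactness at $E^{m+1}(U)$ gives $\Im(i_*)\subseteq\ker(\eta^*)$; the hypothesis for $i=m+1$ kills the latter kernel, forcing $i_*=0$, and hence $\Im(\delta)=\ker(i_*)=E^{m+1}_{\geq 1}(U)$, as required. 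The only mildly delicate ingredient in the whole argument is the existence of the long exact sequence in the colimit, but this is a routine consequence of continuity together with the support-pair axioms for cohomology theories on $Sm'Op/V$.
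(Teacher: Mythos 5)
Your proposal is correct and follows essentially the same route as the paper: the paper's own proof consists of observing that the hypotheses yield exactness of $0\to E^m(U)\to E^m(\eta)\xrightarrow{\partial} E^{m+1}_{\geq 1}(U)\to 0$ (which is exactly your colimit-of-localization-sequences argument, using continuity and the injectivity of $\eta^*$ in degrees $m$ and $m+1$) and then invoking item (d) of Theorem \ref{General_Purity} to splice this into the Cousin complex. Your write-up merely makes explicit the identifications of kernels and images via \ref{General_Purity}(a),(d) that the paper leaves implicit.
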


Let $z\in X$ be a codimension $c(z)$ point. We write in this preprint
$U_z$ for $Spec~\mathcal O_{X,z}$. If $y$ is a point in $U_z$ we write
$E^m_y(U_z)$ for $E^m_y(Spec~\mathcal O_{X,y})=E^m_y(Spec~\mathcal O_{U,y})$.
Following the procedure of constructing the complex
\eqref{thm:Cousin_Exact}
one can construct a complex
of the form
\begin{equation}\label{thm:Cousin_Exact_z}
0\to E^m(U_z) \xrightarrow{\eta^*} E^m(\eta) \xrightarrow{\partial} \oplus_{y\in U^{(1)}_z} E^{m+1}_{y}(U_z) \xrightarrow{\partial} ... \xrightarrow{\partial}
E^{m+c(z)}_{z}(U_z) \to 0
\end{equation}

\begin{notation}\label{General_Cousin_z}
Write $Cous(E,U;m)$ for the complex \eqref{thm:Cousin_Exact_z}.
\end{notation}

\begin{thm}\label{General_Purity_z}
Let $m$, $c$ be an integers, $c\geq 0$,
$z\in X$ be a point, $\text{codim}_X z=c(z)$.
Then \\
(a) the complex \eqref{thm:Cousin_Exact} is exact except possibly the terms
$E^m(U)$ and $\oplus_{y\in U^{(1)}} E^{m+1}_{y}(U)$; \\
(b) particularly, the complex \eqref{thm:Cousin_Exact}
is exact at the term $E^m(\eta)$;\\
(c) for each integer $m$ and $c\geq 1$ the map $E^{m}_{\geq c+1}(U)\to E^m_{\geq c}(U)$ vanishes;\\
(d) for each integer $m$ and $c\geq 1$ the sequence
$0\to E^m_{\geq c}(U)\to E^m_{(c)}(U)\xrightarrow{\partial} E^{m+1}_{\geq c+1}(U)\to 0$
is short exact.
\end{thm}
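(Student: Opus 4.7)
The plan is to deduce Theorem \ref{General_Purity_z} from Theorem \ref{General_Purity} by a continuity and limit argument, realising the local scheme $U_z$ as a pro-open of a semi-local scheme to which Theorem \ref{General_Purity} already applies. First I would pick a closed point $x$ of $X$ lying in $\overline{\{z\}}$; such a point exists because $\overline{\{z\}}\cap X_v$ is a non-empty closed subset of $X_v$, which is of finite type over $k(v)$ and hence Jacobson, so it contains closed points of $X_v$, and these are automatically closed in $X$. The point $z$ then corresponds to a prime ideal $\mathfrak p_z \subset \mathcal O_{X,x}$ with $\mathcal O_{X,z} = (\mathcal O_{X,x})_{\mathfrak p_z}$, which realises $U_z$ as the pro-open
$$ U_z = \varprojlim_{f \in \mathcal O_{X,x} \setminus \mathfrak p_z} D(f) $$
of $U_x$, where each $D(f) \subset U_x$ is the standard affine open, essentially smooth over $V$.

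Next, the continuity of $E$ (Agreement \ref{theory:cont}) expresses $E^m(U_z)$ and $E^m(\eta)$ as filtered colimits of the corresponding groups on the $D(f)$'s. The support cohomology $E^{m+c}_y(U_z)$ at any point $y \in U_z$ depends only on the local ring $\mathcal O_{X,y}$, so it coincides with $E^{m+c}_y$ computed in $U_x$ or in any $D(f)$ containing $y$. Since heights of primes $\mathfrak p_y \subseteq \mathfrak p_z$ are preserved by localisation, codimensions in $U_z$ agree with codimensions in $U_x$, and $U_z^{(c)} = \varinjlim_f D(f)^{(c)}$. Hence the Cousin complex \eqref{thm:Cousin_Exact_z} of $U_z$ is the filtered colimit of the Cousin complexes of the $D(f)$'s, each of which is identified, via the excision decomposition $E^{m+c}_{(c)} = \bigoplus_{y} E^{m+c}_y$, with the part of the Cousin complex of $U_x$ supported at points of $D(f)$.

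Finally, Theorem \ref{General_Purity} applied to the single closed point $x$ supplies exactness of the Cousin complex on $U_x$ at all terms beyond the first two, together with the vanishing of $E^m_{\geq c+1}(U_x)\to E^m_{\geq c}(U_x)$ and the short exactness in (d) for $U_x$. Filtered colimits of exact sequences being exact, these properties descend to $U_z$, establishing (a)--(d). The main technical obstacle will be the compatibility of differentials under restriction from $U_x$ to $D(f)$: the differential in the Cousin complex of $U_x$ can send a class supported at $y \in D(f)$ to classes at specialisations $y'$ of $y$ that leave $D(f)$. One must verify that in the colimit only the \emph{internal} specialisations inside $U_z$ contribute; equivalently, for any specialisation $y'$ of $y \in U_z$ with $\mathfrak p_{y'} \not\subseteq \mathfrak p_z$, choosing any $f \in \mathfrak p_{y'} \setminus \mathfrak p_z$ removes $y'$ from $D(f)$, so its contribution vanishes in the limit, and the limiting differential reduces to the Cousin differential on $U_z$.
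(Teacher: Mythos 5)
Your overall reduction---pass to a closed point $x\in\overline{\{z\}}$, view $U_z$ as a filtered limit of opens $D(f)\subseteq U_x$, and invoke Theorem \ref{General_Purity} on $U_x$ together with continuity---is the same starting point as the paper's. The gap is in the decisive step ``filtered colimits of exact sequences being exact, these properties descend to $U_z$.'' The terms of your filtered system are the Cousin complexes of the opens $D(f)$, and Theorem \ref{General_Purity} says nothing about them: $D(f)$ is not a semi-localization of $X$ at closed points, so its Cousin complex is not known to be exact. Nor is it ``the part of the Cousin complex of $U_x$ supported at points of $D(f)$'' in any exactness-preserving sense: since the complement of $D(f)$ in $U_x$ is stable under specialization, the summands at points outside $D(f)$ form a \emph{subcomplex} $K_f$ of $Cous(E,U_x;m)$, and the Cousin complex of $D(f)$ is the \emph{quotient} complex $Cous(E,U_x;m)/K_f$ (your own remark that the differential pushes a class at $y\in D(f)$ to specializations outside $D(f)$ is precisely the statement that it is not a direct summand). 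Exactness does not pass from a complex to its quotient complexes, nor to their filtered colimit $Cous(E,U_z;m)\cong Cous(E,U_x;m)/K_\infty$; to extract it from the long exact sequence of $0\to K_\infty\to Cous(E,U_x;m)\to Cous(E,U_z;m)\to 0$ you would need exactness of $K_\infty$ in the relevant degrees, which is a statement of exactly the same kind as the one being proved. So the descent of (a), (c), (d) from $U_x$ to $U_z$ is unjustified as written; likewise the vanishing of $E^m_{\geq c+1}(U_z)\to E^m_{\geq c}(U_z)$ does not follow merely from the vanishing on $U_x$ plus exactness of colimits.

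What repairs it---and is the paper's actual proof---is to run the argument at the level of elements with supports, i.e.\ to prove (c) for $U_z$ directly: given $a\in E^m_Z(U_z)$ with $Z\subset U_z$ closed of codimension $\geq c+1$, use continuity (your limit description of $U_z$) to find a closed point $x\in\overline{\{z\}}$ and a class $a_x\in E^m_{\overline{Z}\cap U_x}(U_x)$ with $a_x|_{U_z}=a$; apply Theorem \ref{General_Purity}(c) on $U_x$ to get a closed $Z^{new}\supseteq\overline{Z}\cap U_x$ of codimension $\geq c$ with $a_x\mapsto 0$ in $E^m_{Z^{new}}(U_x)$; restrict back to conclude $a\mapsto 0$ in $E^m_{Z^{new}\cap U_z}(U_z)$, codimensions of generizations of $z$ being preserved under localization. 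This gives (c); (d) is formal from (c); (b) follows from the vanishing of $ext_{2,1}$ exactly as in the proof of Theorem \ref{General_Purity}; and (a) follows from (b), (c), (d). A separate small slip: the existence of a closed point of $X$ in $\overline{\{z\}}$ should not be argued via $\overline{\{z\}}\cap X_v\neq\emptyset$, which can fail (the closure of $z$ may lie entirely in the generic fibre); rather, $\overline{\{z\}}$ is a non-empty quasi-compact closed subset of $X$, hence has a point closed in it, and such a point is closed in $X$.
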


\begin{rem}\label{General_Purity_2_z}
The item (b) states the following.
Let $\alpha\in E^m(\eta)$ be an element such that for each codimension one point $y\in U_z$
it can be lifted to an element $\alpha_y \in E^m(\mathcal O_{X,y})$. Then $\alpha$ can be lifted to an element
$E^m(\mathcal O_{X,z})$. So,
{\it purity holds for the presheaf $E^m$ on $Sm/V$ and the ring $\mathcal O_{X,z}$.
}
\end{rem}

\begin{thm}\label{cousin_1_z}
If the maps $\eta^*: E^i(U_z)\to E^i(\eta)$ are injective for $i=m$ and $i=m+1$, then the Cousin complex
$Cous(E,U_z;m)$ is exact.

Particularly, if for each closed point $x\in X$ the maps
$\eta^*: E^i(U_x)\to E^i(\eta)$ are injective for $i=m$ and $i=m+1$, then the Cousin complex
$Cous(E,U_z;m)$ is exact.
\end{thm}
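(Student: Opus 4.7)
The plan is to deduce the exactness of $Cous(E,U_z;m)$ in two moves: first use Theorem~\ref{General_Purity_z} to localize the possible failure of exactness to two specific terms, and then close each gap using one of the two injectivity hypotheses.

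By Theorem~\ref{General_Purity_z}(a), $Cous(E,U_z;m)$ is already exact at every position except possibly at $E^m(U_z)$ and at $E^{m+1}_{(1)}(U_z)=\oplus_{y\in U_z^{(1)}}E^{m+1}_y(U_z)$. Exactness at $E^m(U_z)$ is literally the injectivity of $\eta^*$ in degree $m$. To get exactness at $E^{m+1}_{(1)}(U_z)$, I would combine Theorem~\ref{General_Purity_z}(d) applied with $c=1$ and $c=2$ to identify $E^{m+1}_{\geq 1}(U_z)$ with $\ker\bigl(\partial:E^{m+1}_{(1)}(U_z)\to E^{m+2}_{(2)}(U_z)\bigr)$, so that the desired exactness amounts to surjectivity of $\partial:E^m(\eta)\to E^{m+1}_{\geq 1}(U_z)$. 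That surjectivity is read off from the localization long exact sequence
\[
E^m(\eta)\xrightarrow{\partial} E^{m+1}_{\geq 1}(U_z)\to E^{m+1}(U_z)\xrightarrow{\eta^*} E^{m+1}(\eta),
\]
since injectivity of $\eta^*$ in degree $m+1$ forces the middle map to be zero.

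For the ``particularly'' clause, the plan is to upgrade the closed-point injectivity to injectivity at $U_z$ and then reapply the first part. Given $\alpha\in E^i(U_z)$ with $\eta^*(\alpha)=0$, continuity of $E^i$ represents $\alpha$ by some $\tilde\alpha\in E^i(W)$ with $W\subseteq X$ an open neighbourhood of $z$. Choosing a closed point $x\in X$ lying in $\overline{\{z\}}\cap W$ reduces the problem to the hypothesis, since $\tilde\alpha|_{U_x}\in E^i(U_x)$ still has vanishing image in $E^i(\eta)$, hence vanishes; continuity of $E^i$ at $U_x$ combined with the fact that Zariski opens are stable under generization (so any open containing $x$ also contains $z$) then produces $W'\subseteq W$ with $z\in W'$ on which $\tilde\alpha$ already vanishes, giving $\alpha=0$.

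The main obstacle is precisely the selection of the closed point $x\in\overline{\{z\}}\cap W$: while closed points of $X$ are dense in $\overline{\{z\}}$ by Jacobson-ness of $X_v$ (when $z\in X_v$, in which case $\overline{\{z\}}\subseteq X_v$) or of $X_K$ (when $z\in X_K$), the open $W$ on which $\tilde\alpha$ happens to be defined may avoid every such $x$. The clean way around is to apply the first part of the theorem to every closed $U_x$ to see that $Cous(E,U_x;m)$ is exact, and then to realize $Cous(E,U_z;m)$ as a subquotient of $Cous(E,U_x;m)$ indexed by the generalizations of $z$ (the ``non-$U_z$'' summands form a genuine subcomplex, since specializations of a point not generalizing $z$ still do not generalize $z$); the exactness of the ambient complex then propagates injectivity of $\eta^*$ from all closed stalks down to the stalk $U_z$, at which point the first part applies verbatim.
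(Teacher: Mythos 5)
Your proof of the first assertion is correct and coincides in substance with the paper's own argument: the paper likewise observes that the two injectivity hypotheses, combined with the localization sequence for supports of codimension $\geq 1$, yield exactness of $0\to E^m(U_z)\to E^m(\eta)\to E^{m+1}_{\geq 1}(U_z)\to 0$, and then splices in item (d) of Theorem \ref{General_Purity_z}; your use of (a) and of (d) for $c=1,2$ is that argument written out in detail. For the ``particularly'' clause, your opening move (extend $a\in\ker\eta^*$ to an open $W\ni z$ by continuity, choose a closed point $x\in\overline{\{z\}}\cap W$, restrict to $U_x$, apply the hypothesis) is exactly the paper's proof, and the difficulty you flag is genuine and is passed over in silence there: when $W\cap\overline{\{z\}}\subseteq X_K$ while $\overline{\{z\}}$ meets $X_v$, Jacobson-ness of $X_K$ only supplies points closed in $X_K$, which need not be closed in $X$; for instance, if $z$ is closed in $X_K$ but not in $X$ and $W\subseteq X_K$, then $W\cap\overline{\{z\}}=\{z\}$ contains no closed point of $X$ at all.

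Your proposed repair, however, does not close this gap. Exactness of $Cous(E,U_x;m)$ for closed points $x$ is a statement about $E^m(U_x)$, $E^m(\eta)$ and the summands $E^{m+c}_y(U_x)$; the group $E^i(U_z)$ occurs in none of these terms, so no subquotient manipulation of that complex can reach it. Concretely, the summands indexed by points not generalizing $z$ do form a subcomplex in degrees $\geq 1$, as you say, and the quotient is the $U_z$-indexed complex with $E^m(\eta)$ in degree $0$; but this cannot be prolonged to the augmentation term, since $E^m(U_x)\to E^m(U_z)$ need not be surjective, and the resulting long exact sequence only controls the image of $E^m(U_x)$ inside $E^m(\eta)$ together with the cohomology of the discarded subcomplex, which is not visibly acyclic. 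In particular it gives no hold on $\ker(\eta^*\colon E^i(U_z)\to E^i(\eta))$, which is precisely what must vanish before the first part can be applied ``verbatim''; the only channel through which the group $E^i(U_z)$ ever enters is continuity, i.e.\ the very extension step whose availability you questioned. So the sentence ``the exactness of the ambient complex then propagates injectivity of $\eta^*$ \dots down to the stalk $U_z$'' is an unsupported claim, and your proof of the second assertion is incomplete. To finish one must either show that $a$ extends to a neighbourhood of some closed point of $X$ lying in $\overline{\{z\}}$ (the delicate case being $z\in X_K$), or handle points of the generic fibre by a separate equicharacteristic-zero argument; some such supplement is in fact also needed to make the paper's own two-line proof of this clause airtight.
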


\begin{cor}\label{E_m_sheaf}
Under the hypothesis of Theorem \ref{cousin_1_z} the $p$-th cohomology group
\begin{equation}\label{Cousin_cohom}
H^p[0 \to E^q(\eta) \xrightarrow{\partial} \oplus_{y\in X^{(1)}} E^{q+1}_{y}(U_y) \xrightarrow{\partial} ... \xrightarrow{\partial}
\oplus_{x\in X^{(d+1)}}E^{q+d+1}_{x}(U_x) \to 0]
\end{equation}
coincides with the cohomology groups $H^p_{Zar}(X,\underline {E}^q)$, where $\underline {E}^q$ is the Zariski sheaf on $X$
associated with the presheaf $W\mapsto E^q(W)$.

If for each integer $i$ and each closed point $x\in X$ the map
$\eta^*: E^i(U_x)\to E^i(\eta)$ is injective, then there is a spectral sequence of the form
$H^p_{Zar}(X,\underline {E}^q)\Rightarrow E^{p+q}(X)$.
\end{cor}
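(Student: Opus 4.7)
\emph{Proof plan.} The strategy is to sheafify the global Cousin complex on $X$, exhibit it (under the hypothesis) as a flasque resolution of the Zariski sheaf $\underline{E}^q$, and then obtain the spectral sequence from the standard coniveau exact couple associated to the filtration of $E^n(X)$ by codimension of support.

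First I would form the complex of Zariski sheaves
$$\mathcal{C}^\bullet:\qquad \mathcal{C}^p \;=\; \bigoplus_{y\in X^{(p)}} (i_y)_* E^{q+p}_y(U_y)$$
on $X$ with differential induced from the Cousin $\partial$, and augmented by the natural generic-restriction map $\underline{E}^q\to \mathcal{C}^0$. Each term is a direct sum of skyscraper sheaves, hence flasque; since $X$ is Noetherian, sections of these direct sums are computed termwise, so $\Gamma(X,\mathcal{C}^\bullet)$ is exactly the complex whose cohomology is stated in the corollary. The crucial computation is that of stalks: for $y\in X^{(p)}$ and $z\in X$, the stalk of $(i_y)_* E^{q+p}_y(U_y)$ at $z$ vanishes unless $y\in U_z$, in which case excision identifies $E^{q+p}_y(U_y)$ with $E^{q+p}_y(U_z)$; and by continuity (Agreement \ref{theory:cont}) the stalk $(\underline{E}^q)_z$ equals $E^q(U_z)$. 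Therefore the augmented stalk complex at $z$ coincides with $Cous(E,U_z;q)$, which is exact by the ``particularly'' clause of Theorem \ref{cousin_1_z} applied to the closed-point injectivity hypothesis. Consequently $\underline{E}^q\to \mathcal{C}^\bullet$ is a flasque resolution, and computing Zariski cohomology via it gives the first assertion.

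For the spectral sequence, I would filter $E^n(X)$ by codimension of support,
$$F^p E^n(X) \;=\; \operatorname{colim}_{\substack{Z\subseteq X\\ \operatorname{codim}_X Z\ge p}} E^n_Z(X),$$
and form the exact couple coming from the localization long exact sequences $E^n_Z(X)\to E^n(X)\to E^n(X-Z)\to E^{n+1}_Z(X)$ for nested closed pairs $Z'\subseteq Z$. After using excision and continuity to evaluate the colimits, the $E_1$ page identifies with the Cousin complex, $E_1^{p,q}=\bigoplus_{x\in X^{(p)}} E^{p+q}_x(U_x)$, with $d_1$ given by $\partial$. Since $\dim X = d+1<\infty$ the filtration is bounded, so the spectral sequence converges to $E^{p+q}(X)$. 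Under the stronger global injectivity hypothesis the first part applies for every $q$, giving $E_2^{p,q}=H^p_{Zar}(X,\underline{E}^q)$ as required.

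The main delicacy is the uniform-in-$z$ stalkwise identification: one must reconcile excision, giving $E^m_y(U_y)=E^m_y(U_z)$ for every $y\in U_z$, with continuity, giving $(\underline{E}^q)_z=E^q(U_z)$, so that Theorem \ref{cousin_1_z} applies at every $z\in X$ and not merely at closed points, ensuring exactness of $\mathcal{C}^\bullet$ at every stalk. Once this is in place, flasqueness of skyscrapers and the finite Krull dimension of $X$ handle the rest by standard homological algebra, together with the routine verification that the $d_1$ of the coniveau exact couple coincides with the Cousin differential $\partial$ on each summand $E^{p+q}_x(U_x)$.
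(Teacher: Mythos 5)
Your proposal is correct and follows exactly the route the paper intends (and which it leaves implicit, citing the Bloch--Ogus/CHK pattern): sheafify the Cousin complex into a sum of flasque skyscraper-type sheaves, identify each stalk complex with $Cous(E,U_z;q)$ via continuity and excision, invoke the ``particularly'' clause of Theorem \ref{cousin_1_z} for exactness at every $z$, and then obtain the spectral sequence from the bounded coniveau filtration whose $E_1$-page is the Cousin complex. No essential difference from the paper's approach, and no gaps of substance.
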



\section{Cousin complexes for certain theories are exact}
The following result is {\it a very weak form of} \cite[Theorem, page 2]{GiP}.
However the proof of Theorem \ref{cousin_BW} uses only the Balmer--Witt functor,
whereas the main tool used in \cite{GiP} is the coherent Witt-groups developed by S.Gille \cite{Gi1}, \cite{Gi2}, \cite{Gi3}, \cite{Gi4}.
\begin{thm}\label{cousin_BW}[S.Gille,I.Panin \cite{GiP}]
Suppose $p\neq 2$. Let $(W,\partial)$ be the Balmer--Witt cohomology theory on $Sm'Op/V$. Then for each integer $m$
the Cousin complex $Cous(W,U,m)$ is exact. Particularly, the complex $Cous(W,U,0)$ is exact. Moreover, for each $z\in U^{(c)}$
there is a non-canonical isomorphism $W^{c}_z(U)\cong W(z)$.
So, a Gersten--Witt complex
\begin{equation}\label{G_Witt}
0\to W(U) \xrightarrow{\eta^*} W(\eta) \xrightarrow{\partial} \oplus_{y\in U^{(1)}} W(y)
\xrightarrow{\partial} ... \xrightarrow{\partial}
\oplus_{x\in U^{(d+1)}}W(x) \to 0
\end{equation}
is exact. Particularly, the map $W(\mathcal O)\to W(\mathcal K)$ is injective and the purity holds for the functor $W$.
Namely, if a class $[\phi]\in W(\mathcal K)$ can be lifted to the group $W(\mathcal O_y)$ for each point $y$ in $U^{(1)}$,
then the class $[\phi]$ can be lifted to the group $W(\mathcal O)$.
\end{thm}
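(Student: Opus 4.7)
The entire theorem reduces, via Theorem \ref{cousin_1_z} applied to $E = W$, to a single injectivity statement: for every point $z \in X$ and every integer $i$, the restriction $\eta^*: W^i(U_z) \to W^i(\eta)$ is injective (and similarly for $U$ itself). Once this injectivity is in hand, Theorem \ref{cousin_1_z} immediately produces exactness of $Cous(W, U; m)$ in every degree $m$. The assertion $W^c_z(U) \cong W(k(z))$ is a standard devissage: choose a regular system of parameters at the regular local ring $\mathcal O_{X,z}$ and iterate the Balmer second residue homomorphism, which also identifies the Cousin differential with the classical Gersten--Witt differential up to signs, so that exactness of the Gersten--Witt complex \eqref{G_Witt} follows from that of $Cous(W, U; m)$. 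The injectivity $W(\mathcal O) \hookrightarrow W(\mathcal K)$ and purity are the degree-zero specializations.

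The proof of injectivity goes as follows. Given $\alpha \in W^i(U)$ with $\eta^*(\alpha) = 0$, continuity of $W^i$ (Agreement \ref{theory:cont}) lifts $\alpha$ to $\tilde\alpha \in W^i(X'')$ for some affine open neighborhood $X'' \subseteq X'$ of the $x_j$'s, and one arranges (after shrinking) that $\tilde\alpha$ vanishes on $X'' - Y$ for a codimension-one closed $Y \subset X''$ containing no component of $X''_v$. After the standard reduction to infinite residue field, using Springer-type injectivity under odd-degree extensions of $A$, Corollary \ref{Homotopy_w_Traces_2} applied to this $Y$ produces a diagram
$$\mathbb A^1_{U'} \xleftarrow{\tau} \mathcal X \xrightarrow{p_X} X''$$
with a finite surjective $\tau$ étale over $\{0\} \times U'$ and $\{1\} \times U'$, a section $\Delta$ with $p_X \circ \Delta = can$, a trivialization $\omega_{\mathcal X/V} \cong \mathcal O_{\mathcal X}$, and the disjointness properties $\mathcal D_1 \cap \mathcal Y = \emptyset$ and $\mathcal D_0' \cap \mathcal Y = \emptyset$ for $\mathcal D_0 = \Delta(U') \sqcup \mathcal D_0'$. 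The trivialization of the relative canonical bundle makes the Balmer--Witt pushforward $\tau_*: W^i(\mathcal X) \to W^i(\mathbb A^1_{U'})$ available. Setting $\beta = p_X^*(\tilde\alpha)$, the choice of $Y$ together with properties (3)--(4) guarantees that $\beta$ vanishes on an open containing $\mathcal D_1 \sqcup \mathcal D_0'$. Homotopy invariance of Witt groups in characteristic $\neq 2$ then gives $i_0^*\tau_*\beta = i_1^*\tau_*\beta$ in $W^i(U')$; a base-change computation using étaleness of $\tau$ at the two sections identifies $i_1^*\tau_*\beta$ with a sum of restrictions of $\beta$ to the components of $\mathcal D_1$, which is zero, and $i_0^*\tau_*\beta$ with $\Delta^*\beta$ plus a contribution from $\mathcal D_0'$, which also vanishes. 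Hence $\alpha|_{U'} = \Delta^*\beta = 0$, and since $U'$ is a Zariski neighborhood of all the $x_j$ inside $U$, $\alpha = 0$.

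The main technical obstacle is the construction and base-change compatibility of the Balmer--Witt pushforward $\tau_*$ along the finite surjective morphism $\tau$ under the chosen trivialization of $\omega_{\mathcal X/V}$, together with the careful bookkeeping needed to separate the contributions of $\Delta(U')$ and $\mathcal D_0'$ at the zero section. Once these transfer formalities are in place, the same argument applies verbatim with $U$ replaced by $U_z$ and yields injectivity of $\eta^*: W^i(U_z) \to W^i(\eta)$ for every point $z \in X$; Theorem \ref{cousin_1_z} then provides exactness of $Cous(W, U; m)$, and the Gersten--Witt complex \eqref{G_Witt} follows by the devissage identification above.
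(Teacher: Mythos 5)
Your proposal is correct in outline and follows the same two-step strategy as the paper: reduce exactness of the Cousin complex to injectivity of $\eta^*$ on Witt groups via the general criterion (Theorem \ref{cousin_1}), then prove that injectivity with the geometric presentation corollary and an Ojanguren--Panin-style trace argument along the finite morphism $\tau$, killing the contributions of $\mathcal D_1$ and $\mathcal D'_0$ and extracting $\Delta^*(\alpha)=0$. The differences are worth noting. First, the paper does not reduce to infinite residue fields at all: it applies Corollary \ref{Homotopy_w_Traces} (valid over any residue field, no \'etaleness of $\tau$ at the two sections) and replaces your finite-\'etale pushforward by the Euler trace $Tr^{\mathcal E}$ built from the chosen trivialization $\mathcal O_{\mathcal X}\cong\omega_{\mathcal X/V}$ as in \cite[Section 12]{OP2}, which yields directly the identity $Tr^{\mathcal E}_{\mathcal D_1/U'}(\alpha|_{\mathcal D_1})=Tr^{\mathcal E}_{\mathcal D'_0/U'}(\alpha|_{\mathcal D'_0})+u\cdot\Delta^*(\alpha)$; your route through Corollary \ref{Homotopy_w_Traces_2} additionally needs Springer-type injectivity of Witt groups under odd-degree finite \'etale extensions of semi-local rings (deducible from Witt decomposition plus Scully's Artin--Springer theorem, as the paper does for the isotropy theorem), a step you assert but do not justify. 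Second, for the degrees $i$ with $4\nmid i$ the paper simply invokes Balmer's vanishing $W^i(\mathcal O)=W^i(\mathcal K)=0$, so only degree-zero injectivity is ever proved; you instead propose running the transfer argument for all shifted groups $W^i$, which works but requires the heavier pushforward and base-change formalism for shifted Witt groups that the paper deliberately avoids. Finally, the paper obtains the crucial divisor $Y=\{f=0\}$ missing all components of $X'_v$ by using injectivity of $W$ of the semi-local Dedekind ring $\mathcal O_{X',X'_v}$ into $W(\mathcal K)$, a point you gloss over with ``one arranges after shrinking''; and the passage to $U_z$ is handled in the paper by continuity and the second part of Theorem \ref{cousin_1_z} (reducing to closed points), not by running the geometric argument verbatim at a non-closed $z$, since the geometric theorems are stated for closed points only.
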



\begin{defn}\label{defn:et_coh_theory}
Let $\mathcal F$ be an \'{e}tale sheaf on on the Big \'{e}tale site $(Sch'/V)_{Et}$.
Define a contravariant functor on $Sm'Op/V$ by
$$(Y,Y-Z)\mapsto H^*_Z(Y,\mathcal F):=Ext^n(\mathbb Z(Y)/\mathbb Z(Y-Z),\mathcal F),$$
where the Ext-groups are taken in the category of sheaves on the small \'{e}tale site $Y_{et}$.
The boundary maps $\partial_{(Y,Y-Z)}: H^n(Y-Z,\mathcal F)\to H^{n+1}_Z(Y,\mathcal F)$ is induced
by the short exact sequence
$0\to \mathbb Z(Y-Z) \to \mathbb Z(Y)\to \mathbb Z(Y)/\mathbb Z(Y-Z)\to 0$
of representable sheaves on the small \'{e}tale site $Y_{et}$.

Let $r>1$ be an integer such that for
any $Y\in Sch'/V$ one has $r\cdot \mathcal F(Y)=0$ and $r$ is coprime to $p$.
Then for each
$Y\in Sch'/V$ and each $n\geq 0$ and the projection $pr: Y\times \mathbb A^1\to Y$ the map
$pr^*: H^n_{et}(Y,\mathcal F)\to H^n_{et}(Y\times \mathbb A^1,\mathcal F)$ is an isomorphism.
Thus, the contravariant functor $(Y,Y-Z)\mapsto H^*_Z(Y,\mathcal F)$ together with the boundary maps
$\partial_{(Y,Y-Z)}$ form a cohomology theory on $Sm'Op/V$ in the sense of
\cite[Definition 2.1]{PW} (inspired by \cite[Definition 2.1]{PSm}).
\end{defn}

\begin{thm}\label{cousin_H_et_mu}
Suppose $r$ is coprime to the prime $p$ and $s$ be an integer. Let $(H^*_{et}(-, \mu^{\otimes s}_r),\partial)$
be the \'{e}tale cohomology theory on $Sm'Op/V$. Then for each integer $m$
the Cousin complex $Cous(H^*_{et}(-, \mu^{\otimes s}_r),U,m)$ is exact. Moreover, for each $z\in U^{(c)}$
there is a canonical isomorphism $H^{m+c}_{et,z}(U, \mu^{\otimes s}_r)=H^{m-c}_{et}(z,\mu^{\otimes (s-c)}_r)$.
So, the Bloch--Ogus type complex
\begin{equation}\label{Bloch__Ogus}
0\to H^{m}_{et}(U,\mu^{\otimes s}_r) \xrightarrow{\eta^*} H^{m}_{et}(\eta,\mu^{\otimes s}_r)
\xrightarrow{\partial} ... \xrightarrow{\partial}
\oplus_{x\in U^{(d+1)}}H^{m-d-1}_{et}(x,\mu^{\otimes (s-d-1)}_r) \to 0
\end{equation}
is exact.
\end{thm}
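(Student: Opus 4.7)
The plan is to verify the hypotheses of Theorem \ref{cousin_1} for the \'{e}tale cohomology theory $E^n = H^n_{et}(-, \mu_r^{\otimes s})$ and, independently, to identify the local cohomology terms by absolute purity. Once both ingredients are in place, the Cousin complex is exact by Theorem \ref{cousin_1}, and substituting the purity isomorphism yields exactness of the Bloch--Ogus complex \eqref{Bloch__Ogus}.

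For the identification of local terms I would invoke Gabber's absolute cohomological purity theorem. Since $r$ is coprime to $p$, the sheaf $\mu_r^{\otimes s}$ is locally constant on every scheme in $Sm'/V$. For a codimension $c$ point $z \in U$, the inclusion $\{z\} \hookrightarrow U_z = \Spec \mathcal O_{X,z}$ is a regular closed immersion of codimension $c$ between regular schemes, so Gabber's theorem gives a canonical isomorphism $H^{m+c}_z(U_z, \mu_r^{\otimes s}) \cong H^{m-c}(k(z), \mu_r^{\otimes (s-c)})$ via the standard cohomological shift by $2c$. Excision passes this to $H^{m+c}_z(U, \mu_r^{\otimes s})$, and summing over $z \in U^{(c)}$ identifies the Cousin complex term by term with \eqref{Bloch__Ogus}.

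Next I would prove that $\eta^*: H^i_{et}(U, \mu_r^{\otimes s}) \to H^i_{et}(\eta, \mu_r^{\otimes s})$ is injective in every degree $i$. Given $\alpha$ in the kernel, continuity of the \'{e}tale presheaves produces a lift $\tilde\alpha \in H^i_{et}(X', \mu_r^{\otimes s})$, and a further application of continuity (using $\eta^*\tilde\alpha = 0$) yields a closed subset $Y \subseteq X'$ on whose complement $\tilde\alpha$ vanishes; after enlarging $Y$ and shrinking $X'$, one may assume that $Y$ has pure codimension $1$ and does not contain any component of $X'_v$, so the hypotheses of Corollary \ref{Homotopy_w_Traces} are met. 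The corollary supplies a finite surjective morphism $\tau: \mathcal X \to \mathbb A^1_{U'}$, a map $p_X: \mathcal X \to X'$, a section $\Delta$, and divisors $\mathcal D_0, \mathcal D_1$ with the stated properties. Since $\tau$ is finite between regular $V$-schemes, miracle flatness makes it flat; together with the trivialization $\omega_{\mathcal X/V} \cong \mathcal O_{\mathcal X}$, this provides an unshifted trace $\Tr_\tau: H^i_{et}(\mathcal X, \mu_r^{\otimes s}) \to H^i_{et}(\mathbb A^1_{U'}, \mu_r^{\otimes s})$ compatible with arbitrary base change.

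Setting $\beta := \Tr_\tau(p_X^*\tilde\alpha)$, $\mathbb A^1$-homotopy invariance of \'{e}tale cohomology with $r$-torsion coefficients gives $i_0^*\beta = i_1^*\beta$ in $H^i_{et}(U', \mu_r^{\otimes s})$. By base change for the trace, $i_1^*\beta = \Tr_{\mathcal D_1/U'}(p_X^*\tilde\alpha|_{\mathcal D_1})$, which vanishes since property (2) gives $\mathcal D_1 \cap \mathcal Y = \emptyset$, hence $\mathcal D_1 \subseteq p_X^{-1}(X' - Y)$, where $\tilde\alpha$ is trivial. Similarly, the $\mathcal D'_0$-contribution to $i_0^*\beta$ vanishes by property (3), while the $\Delta(U')$-contribution equals $\Delta^* p_X^*\tilde\alpha = \tilde\alpha|_{U'} = \alpha$ by property (1). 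Combining gives $\alpha = i_0^*\beta = i_1^*\beta = 0$, and passing to the colimit over neighborhoods yields $\alpha = 0$ in $H^i_{et}(U, \mu_r^{\otimes s})$. The main obstacle I anticipate is constructing the trace $\Tr_\tau$ with the correct base-change compatibility along $\Delta$, $i_0$ and $i_1$; the trivialization $\omega_{\mathcal X/V} \cong \mathcal O_{\mathcal X}$ supplied by Corollary \ref{Homotopy_w_Traces} is precisely what removes the Tate twist from the trace and makes the component-by-component computation on $\mathcal D_0 = \Delta(U') \sqcup \mathcal D'_0$ and $\mathcal D_1$ come out cleanly.
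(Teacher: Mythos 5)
Your overall strategy is the one the paper uses (the paper deduces the theorem from Theorem \ref{cousin_H_et_loc_const_F}, which in turn is Theorem \ref{inj_main} --- a trace argument built on Corollary \ref{Homotopy_w_Traces} and homotopy invariance --- followed by Theorem \ref{cousin_1}, with the local terms identified by absolute purity), but there is a genuine gap at the point where you produce the closed subset $Y$. Continuity only tells you that $\tilde\alpha$ vanishes on \emph{some} nonempty open $W\subseteq X'$, i.e.\ off a closed subset $Y=X'-W$ that avoids the generic point $\eta$ of $X'$; it gives no control whatsoever over $Y$ at the generic points of the closed fibre $X'_v$. The hypothesis of Theorem \ref{geom_pres_mixed_char} (hence of Corollary \ref{Homotopy_w_Traces}) is that $Y$ contains \emph{no irreducible component of $X'_v$}, and your fix ``after enlarging $Y$ and shrinking $X'$'' cannot achieve this: enlarging $Y$ only makes the condition worse, and $X'$ cannot be shrunk away from a component of $X'_v$ contained in $Y$ because, by the standing assumptions of Section \ref{agreements}, $X'$ must contain all generic points of $X_v$. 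What is actually needed is that $\tilde\alpha$ already vanishes on the semi-local Dedekind scheme $\mathcal S=\operatorname{Spec}\mathcal O_{X',X'_v}$ at the generic points of $X'_v$ (Notation \ref{inj_to_gen_point}); then one gets an $f$ with $\tilde\alpha|_{X'_f}=0$ and $Y=\{f=0\}$ containing no component of $X'_v$. This is the ``$\dim 1$ property'' of Definition \ref{dim_one_property}, and for \'etale cohomology with $\mu_r^{\otimes s}$-coefficients it is exactly Lemma \ref{dim_one_Et_Coh} (Colliot-Th\'el\`ene--Hoobler--Kahn): injectivity of $H^i_{et}(\mathcal S,\mu_r^{\otimes s})\to H^i_{et}(\eta,\mu_r^{\otimes s})$ for a semi-local Dedekind scheme whose closed points have residue characteristic $p$ and whose generic point has characteristic $0$. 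This is a substantive mixed-characteristic input (rigidity/proper or affine base change type), not a formal consequence of continuity, and your sketch neither proves nor cites it; without it the application of Corollary \ref{Homotopy_w_Traces} is illegitimate and the argument collapses.

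Two smaller points. First, the trace you need is not the duality-type trace requiring the trivialization $\omega_{\mathcal X/V}\cong\mathcal O_{\mathcal X}$: for a finite locally free morphism and a locally constant $r$-torsion sheaf the transfer $\tau_*\tau^*\mathcal F\to\mathcal F$ exists and is compatible with arbitrary base change with no dualizing-sheaf considerations (this is how the paper proceeds, via the standard trace structure of Examples \ref{Loc_Const_is_w_Tr} and \ref{Tr_on_Et_Coh} and the general Theorem \ref{inj_main}); the $\omega$-trivialization is only needed for the Witt-group argument (Theorem \ref{cousin_BW}). Second, note that Corollary \ref{Homotopy_w_Traces} (unlike Corollary \ref{Homotopy_w_Traces_2}) does not assert that $\tau$ is \'etale over $\{0\}\times U'$ and $\{1\}\times U'$; with the finite-flat trace and its base-change and additivity properties this does not matter, but your phrasing suggests you are implicitly using the \'etale version, which requires $k(v)$ infinite. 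Your use of Gabber's absolute purity to identify $H^{m+c}_{et,z}(U,\mu_r^{\otimes s})$ with $H^{m-c}_{et}(z,\mu_r^{\otimes(s-c)})$ is consistent with what the paper asserts.
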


\begin{thm}\label{cousin_H_et_loc_const_F}
Suppose $r$ is coprime to the prime $p$.
Let $\cal F$ be a locally constant $r$-torsion
sheaf on the big \'{e}tale site $(Sch'/V)_{et}$.
Let $(H^*_{et}(-, \mathcal F),\partial)$
be the \'{e}tale cohomology theory on $Sm'Op/V$.
Then for each integer $m$
the Cousin complex $Cous(H^*_{et}(-, \mathcal F),U,m)$ is exact. Moreover, for each $z\in U^{(c)}$
there is a canonical isomorphism $H^{m+c}_{et,z}(U, \mathcal F)=H^{m-c}_{et}(z,\mathcal F(-c))$.
So, the Bloch--Ogus type complex
\begin{equation}\label{Bloch__Ogus}
0\to H^{m}_{et}(U,\mathcal F) \xrightarrow{\eta^*} H^{m}_{et}(\eta,\mathcal F)
\xrightarrow{\partial} ... \xrightarrow{\partial}
\oplus_{x\in U^{(d+1)}}H^{m-d-1}_{et}(x,\mathcal F(-d-1)) \to 0
\end{equation}
is exact.
\end{thm}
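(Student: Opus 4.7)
The plan is to show that the proof of Theorem \ref{cousin_H_et_mu} for $\mu_r^{\otimes s}$-coefficients carries over almost verbatim to a locally constant $r$-torsion sheaf $\mathcal F$ with $r$ invertible on $V$, since the two essential inputs (homotopy invariance and Gabber's absolute cohomological purity) hold equally in this broader generality.

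First I would check that $(Y, Y-Z) \mapsto H^*_Z(Y, \mathcal F)$ together with $\partial$ defines a $\mathbb Z$-graded continuous cohomology theory on $Sm'Op/V$ in the sense of \cite[Definition 2.1]{PW}. \'Etale excision and continuity of $H^n_{et}(-,\mathcal F)$ are standard; the nontrivial axiom is $\mathbb A^1$-homotopy invariance, which is the classical SGA 4 XV result for $r$-torsion locally constant coefficients when $r$ is coprime to $p = \chsm k(v)$. Having this cohomology-theory structure in place, Theorem \ref{cousin_1_z} reduces exactness of $Cous(H^*_{et}(-,\mathcal F), U, m)$ to proving injectivity of the generic restriction $\eta^*: H^i_{et}(U_x, \mathcal F) \to H^i_{et}(\eta, \mathcal F)$ at each closed point $x \in X$ and for $i = m, m+1$.

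For the injectivity, let $\alpha \in H^i_{et}(U_x, \mathcal F)$ be a class killed by $\eta^*$. After shrinking to an affine open $X'$ containing the closed points, $\alpha$ is supported on a closed subset $Z \subset X'$ of positive codimension. By the moving lemma (Theorem \ref{An extended_moving_lemma}), there exist an affine open $U' \ni x_1, \dots, x_n$, a closed subset $Z^{new} \supset Z$ of codimension $c-1$, and a Nisnevich-sheaf homotopy $\Phi_t: \Aff^1 \times U'/(U'-Z^{new}) \to X'/(X'-Z)$ with $\Phi_0$ the canonical map and $\Phi_1$ the zero map. Applying the cohomology functor $H^i_{et}(-, \mathcal F)$ and invoking $\mathbb A^1$-homotopy invariance of this functor on smooth $V$-schemes, the class $\alpha$ restricts trivially to $U'$, hence vanishes on $U_x$, proving the desired injectivity.

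The identification $H^{m+c}_{et, z}(U, \mathcal F) \cong H^{m-c}_{et}(z, \mathcal F(-c))$ for $z \in U^{(c)}$ then follows from Gabber's absolute cohomological purity applied to the regular closed immersion $\overline{\{z\}} \cap U \hookrightarrow U$; the twist $\mathcal F(-c) = \mathcal F \otimes \mu_r^{\otimes(-c)}$ is the usual stalk-level Tate twist and specialises to $\mu_r^{\otimes(s-c)}$ when $\mathcal F = \mu_r^{\otimes s}$. The main obstacle is the injectivity step: while its structure mirrors the $\mu_r^{\otimes s}$-case, one must confirm that the Nisnevich-sheaf homotopy supplied by Theorem \ref{An extended_moving_lemma} interacts correctly with an arbitrary locally constant coefficient. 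This reduces to naturality of $\mathbb A^1$-homotopy invariance in the coefficient sheaf, which is immediate, so no new geometric input beyond Theorem \ref{cousin_H_et_mu} is needed.
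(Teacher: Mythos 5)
Your reduction of the theorem to the injectivity of $\eta^*\colon H^i_{et}(U,\mathcal F)\to H^i_{et}(\eta,\mathcal F)$ for $i=m,m+1$ via Theorem \ref{cousin_1} is the right skeleton, and your treatment of the cohomology-theory axioms and of the purity isomorphism $H^{m+c}_{et,z}(U,\mathcal F)\cong H^{m-c}_{et}(z,\mathcal F(-c))$ (Gabber's absolute purity) is fine. The genuine gap is your injectivity step: you try to obtain it from the moving lemma (Theorem \ref{An extended_moving_lemma}), but that lemma requires the support $Z$ to have pure codimension $c\geq 2$ and only produces the vanishing of the support-extension maps $E^m_{\geq c+1}(U)\to E^m_{\geq c}(U)$, i.e.\ items (c),(d) of Theorem \ref{General_Purity}. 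A class $\alpha$ killed at the generic point is, after shrinking, supported on a closed subset of codimension $1$ (a divisor), where the moving lemma does not apply; and even formally, moving a codimension-$c$ support to codimension $c-1$ never reaches the statement ``$\alpha$ vanishes in $H^i_{et}(U',\mathcal F)$''. This is precisely why Theorem \ref{General_Purity}(a) excludes the first two terms of the Cousin complex and why Theorem \ref{cousin_1} takes the injectivity of $\eta^*$ as a \emph{hypothesis} rather than deriving it.

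The paper closes this gap by a transfer argument that your proposal never invokes: by Examples \ref{Loc_Const_is_w_Tr} and \ref{cont_of_et_coh} the presheaf $H^m_{et}(-,\mathcal F)$ is an additive continuous presheaf with transfers on $Sch'/V$ (the standard trace structure on a locally constant sheaf), it is of cohomological type, and by Lemma \ref{dim_one_Et_Coh} (Colliot-Th\'el\`ene--Hoobler--Kahn) it satisfies the dim~$1$ property over the semi-local Dedekind ring $\mathcal O_{X',X'_v}$. Theorem \ref{inj_main} then gives the injectivity: one spreads the class to some $X'$, kills it on $X'_f$ using the dim~$1$ property, applies the geometric presentation of Corollary \ref{Homotopy_w_Traces} to get a finite surjective $\tau\colon\mathcal X\to\Aff^1_{U'}$ with section $\Delta$, and uses the trace identity $Tr_{\mathcal D_1/U'}(\alpha|_{\mathcal D_1})=Tr_{\mathcal D'_0/U'}(\alpha|_{\mathcal D'_0})+\Delta^*(\alpha)$ together with homotopy invariance to conclude $\tilde a|_{U'}=0$. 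Without the trace structure and the Dedekind-level injectivity input, your argument cannot establish the injectivity of $\eta^*$, so the proof as proposed does not go through.
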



Consider the Thomason-Throughbor $K$-groups with finite coefficients $\mathbb Z/r$.
They form {\it a cohomology theory} on the category $Sm'Op/V$ in the sense of \cite{PSm}.
Namely, for each integer $n$ and each pair $(X,X-Z)$ in $Sm'Op/V$ put
$K^{-n}(X,X-Z;\mathbb Z/r)=K_{n}(X on Z;\mathbb Z/r)$ (see \cite{TT}).
The definition of $\partial$ is contained in \cite[Theorem 5.1]{TT}
Repeating literally arguments of
\cite[Example 2.1.8]{PSm} we see that that this way we get a cohomology theory
on $Sm'Op/V$. Moreover, if $X$ is quasi-projective then
$K_n(X on X; \mathbb Z/r)$ coincides with the Quillen's $K$-groups $K^Q_n(X;\mathbb Z/r)$ by
\cite[Theorems 3.9 and 3.10]{TT}. Write $K^{-n}_Z(X;\mathbb Z/r)$ for $K^{-n}(X,X-Z;\mathbb Z/r)$.

\begin{thm}\label{cousin_K_w_Z_rZ}[Gillet, H., Levine M. \cite{GL}]
Suppose $r$ is coprime to the prime $p$. Let $(K^*(-;\mathbb Z/r),\partial)$ be the Thomason-Throughbor $K$-theory with finite coefficients $\mathbb Z/r$.
It is a cohomology theory on $Sm'Op/V$  as explained just above.
Then for each integer $m$
the Cousin complex $Cous(K^*(-;\mathbb Z/r),U,m)$ is exact.
Moreover, for each $z\in U^{(c)}$
there is a canonical isomorphism
$K^{m+c}_z(U;\mathbb Z/r)=K^{m+c}(z;\mathbb Z/r)=K_{-m-c}(z;\mathbb Z/r)$.
So, the Gersten complex
\begin{equation}\label{thm:Gersten_w_Z/r}
0\to K_m(U;\mathbb Z/r) \xrightarrow{\eta^*} K_m(\eta;\mathbb Z/r) \xrightarrow{\partial} ... \xrightarrow{\partial}
\oplus_{x\in U^{(d+1)}}K_{m-d-1}(x;\mathbb Z/r) \to 0
\end{equation}
is exact.
\end{thm}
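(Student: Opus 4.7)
The overall strategy is to adapt the classical Gillet--Levine argument to mixed characteristic, using Corollary \ref{Homotopy_w_Traces} as the geometric input and Theorem \ref{cousin_1_z} as the formal reduction.

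I would first verify that the assignment $(X, X-Z) \mapsto K^{-n}(X, X-Z; \Z/r) := K_n(X \text{ on } Z; \Z/r)$ together with the Thomason--Trobaugh boundary forms a continuous $\Z$-graded cohomology theory on $Sm'Op/V$ in the sense of Agreement \ref{theory:cont}. Excision and the localization sequence are built into \cite{TT}, homotopy invariance on regular schemes is Quillen's theorem, and continuity follows from the compatibility of $K$-theory with filtered colimits of rings. Theorem \ref{cousin_1_z} then reduces the exactness of $Cous(K^*(-;\Z/r), U_z; m)$, for every $z \in X$ and every $m$, to the statement that $\eta^* \colon K^i(U_x; \Z/r) \to K^i(\eta; \Z/r)$ is injective for every closed point $x \in X$ and every integer $i$.

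Next I would prove this injectivity by the classical trace trick. Let $\alpha \in K^i(U; \Z/r)$ satisfy $\eta^*(\alpha) = 0$. Continuity produces a representative $\tilde\alpha \in K^i(X'; \Z/r)$ on some affine neighborhood $X' \ni x_1, \ldots, x_n$; the localization sequence together with the Noetherian property of $X'$ then yields a closed subset $Y \subset X'$ of pure codimension one, containing no irreducible component of $X'_v$, such that $\tilde\alpha$ lifts to a class $\tilde\alpha_Y \in K^i_Y(X'; \Z/r)$. Apply Corollary \ref{Homotopy_w_Traces} to the pair $(X', Y)$ to obtain the diagram $\Aff^1_{U'} \xleftarrow{\tau} \mathcal X \xrightarrow{p_X} X'$ with $\tau$ finite surjective between $V$-smooth schemes, with $\omega_{\mathcal X/V} \cong \cO_{\mathcal X}$, and with the support conditions (0)--(3). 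Since $\mathcal X$ and $\Aff^1_{U'}$ are $V$-smooth and hence Cohen--Macaulay, the finite map $\tau$ is flat by miracle flatness, so Thomason--Trobaugh proper pushforward provides a transfer $\tau_* \colon K^i_{\mathcal Y}(\mathcal X; \Z/r) \to K^i_{\tau(\mathcal Y)}(\Aff^1_{U'}; \Z/r)$, where $\mathcal Y = p_X^{-1}(Y)$. Set $\beta = p_X^*(\tilde\alpha_Y)$ and $\gamma = \tau_*\beta$; by (0), $\tau(\mathcal Y)$ is quasi-finite over $U'$.

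Homotopy invariance of $K$-theory with mod-$r$ coefficients on the regular scheme $U'$ gives $i_0^* = i_1^* \colon K^i(\Aff^1_{U'}; \Z/r) \to K^i(U'; \Z/r)$. Base change along $\{1\} \times U' \hookrightarrow \Aff^1_{U'}$, combined with (2), forces $i_1^*(\gamma) = 0$ because $\beta$ vanishes on $\mathcal D_1$; base change along $\{0\} \times U' \hookrightarrow \Aff^1_{U'}$ together with the decomposition $\mathcal D_0 = \Delta(U') \sqcup \mathcal D_0'$ with $\mathcal D_0' \cap \mathcal Y = \emptyset$ and the relations $\tau\circ\Delta = i_0$, $p_X\circ\Delta = can$ reduces $i_0^*(\gamma)$ to $\Delta^*(\beta) = can^*(\tilde\alpha_Y) = \alpha$. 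Hence $\alpha = i_0^*\gamma = i_1^*\gamma = 0$, establishing the required injectivity. The identification $K^{m+c}_z(U; \Z/r) = K^{m+c}(z; \Z/r) = K_{-m-c}(z; \Z/r)$ for $z \in U^{(c)}$ then follows from Quillen's d\'evissage applied to the regular local scheme $\Spec \cO_{X,z}$, and the exactness of the Gersten complex is a rewriting of the Cousin exactness via this identification. The main obstacle is the verification of the base-change identities $i_\epsilon^* \circ \tau_* = (\tau|_{\mathcal D_\epsilon})_* \circ i_{\mathcal D_\epsilon}^*$ for $\epsilon = 0, 1$ at the level of Thomason--Trobaugh $K$-theory with supports and with mod-$r$ coefficients; the needed Tor-independence comes from the flatness of $\tau$ together with the regularity of the closed immersions $i_0, i_1$.
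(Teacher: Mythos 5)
Your overall route is the paper's route: continuity to spread the class to a neighborhood $X'$, Corollary \ref{Homotopy_w_Traces} applied to a pair $(X',Y)$, a finite-flat transfer plus $\mathbb A^1$-homotopy invariance to contract the class to $\Delta^*(\beta)=\alpha$, and then the formal Cousin machinery (Theorems \ref{General_Purity}, \ref{cousin_1}/\ref{cousin_1_z}) together with d\'evissage for the identification of the terms. (The paper packages the transfer step abstractly, via presheaves with transfers and Theorem \ref{inj_main}, rather than via Thomason--Trobaugh pushforward with supports and base change, but that difference is cosmetic.) However, there is a genuine gap at the sentence ``the localization sequence together with the Noetherian property of $X'$ then yields a closed subset $Y\subset X'$ of pure codimension one, \emph{containing no irreducible component of $X'_v$}, such that $\tilde\alpha$ lifts to $K^i_Y(X';\mathbb Z/r)$.'' Vanishing of $\tilde\alpha$ at the generic point only gives a nonempty open $W\subseteq X'$ with $\tilde\alpha|_W=0$, hence a support $Y=X'\setminus W$ of codimension $\geq 1$; nothing forces $W$ to contain the generic points of the closed fibre, so $Y$ may well contain components of $X'_v$ (e.g.\ $W$ could lie entirely in the generic fibre), and then Corollary \ref{Homotopy_w_Traces} simply cannot be applied to $(X',Y)$ -- the hypothesis on $Y$ in Theorem \ref{geom_pres_mixed_char} is violated and the crucial quasi-finiteness of $\mathcal Y/U'$ is lost.

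This is exactly the point where the paper inserts a nontrivial extra input: the ``dim 1 property'' for $K_*(-;\mathbb Z/r)$, i.e.\ injectivity of $K_m(\mathcal O_{X',X'_v};\mathbb Z/r)\to K_m(\eta;\mathbb Z/r)$ for the semi-local Dedekind ring at the generic points of $X'_v$ (Notation \ref{inj_to_gen_point}), proved in Lemma \ref{dim_one_K} by Suslin rigidity. Only after knowing that $\tilde\alpha$ already dies over $\mathcal O_{X',X'_v}$ can one choose $f$ nonvanishing at all generic points of $X'_v$ with $\tilde\alpha|_{X'_f}=0$, so that $Y=\{f=0\}$ avoids every component of the closed fibre. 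A symptom of the omission in your write-up is that the hypothesis $\gcd(r,p)=1$ is never actually used: homotopy invariance and flat pushforward hold integrally, so your argument as written would prove the integral mixed-characteristic Gersten statement, which is not accessible by these methods; the coprimality enters precisely through the rigidity step you skipped. Supplying Lemma \ref{dim_one_K} (or an equivalent injectivity over the semi-local Dedekind ring of the closed fibre) closes the gap, and the rest of your argument then goes through essentially as in the paper.
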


\begin{thm}\label{cousin_K2}
Suppose $r$ is coprime to the prime $p$.
The following Gersten type complexes are exact, where $K_*$ is the Quillen K-theory,
\begin{equation}\label{thm:Gersten_is_Exact}
0\to K_2(U) \xrightarrow{\eta^*} K_2(\eta) \xrightarrow{\partial} \oplus_{y\in U^{(1)}} K_1(k(y)) \xrightarrow{\partial}
\oplus_{x\in U^{(2)}}K_0(k(x)) \to 0
\end{equation}
\begin{equation}\label{thm:Gersten_mod_n_is_Exact}
0\to K_2(U)/r \xrightarrow{\eta^*} K_2(\eta)/r \xrightarrow{\partial} \oplus_{y\in U^{(1)}}K_1(k(y))/r \xrightarrow{\partial}
\oplus_{x\in U^{(2)}}K_0(k(x))/r \to 0
\end{equation}
\end{thm}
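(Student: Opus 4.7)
The plan is to treat the two Gersten complexes separately, each as a specialisation of the general Cousin machinery of Section~4. For the mod-$r$ complex \eqref{thm:Gersten_mod_n_is_Exact}, one directly applies Theorem~\ref{cousin_K_w_Z_rZ} at $m=-2$. The identification $K^{m+c}_z(U;\mathbb Z/r)=K_{-(m+c)}(k(z);\mathbb Z/r)$ furnished there turns $Cous(K^*(-;\mathbb Z/r),U;-2)$ into the complex \eqref{thm:Gersten_mod_n_is_Exact}, and because negative $K$-theory of a field with finite coefficients vanishes, the complex truncates automatically after the codimension-$2$ term.

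For the integral complex \eqref{thm:Gersten_is_Exact}, I would first check that Quillen's $K$-theory fits into the $\mathbb Z$-graded continuous cohomology theory framework on $Sm'Op/V$ in the sense of \cite{PSm}. On our quasi-projective schemes Quillen's $K$-theory coincides with Thomason--Trobaugh's by \cite[Thm.~3.9,~3.10]{TT}, so the argument of \cite[Ex.~2.1.8]{PSm} used for the mod-$r$ coefficients carries over verbatim to the integral case. Theorem~\ref{cousin_1_z} then reduces exactness of the integral Cousin complex at $m=-2$ to injectivity of $\eta^*\colon K_i(\mathcal O)\to K_i(\mathcal K)$ for $i=1$ and $i=2$. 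The case $i=1$ is immediate: $K_1(\mathcal O)=\mathcal O^*$ injects into $\mathcal K^*$ since $\mathcal O$ is a regular semi-local domain.

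The main obstacle is therefore injectivity of $\eta^*\colon K_2(\mathcal O)\to K_2(\mathcal K)$. To establish it I would combine the mod-$r$ result for every $r$ coprime to $p$ with a geometric homotopy drawn from Corollary~\ref{Homotopy_w_Traces}. Given $\alpha\in\ker\eta^*$, the mod-$r$ injectivity already proved and the $K$-theoretic universal coefficient sequence pin down strong divisibility properties of $\alpha$ away from $p$; to eliminate $\alpha$ outright one applies the diagram $\mathbb A^1_{U'}\xleftarrow{\tau}\mathcal X\xrightarrow{p_X}X'$ of Corollary~\ref{Homotopy_w_Traces} to a suitable codimension-one closed subset $Y\subset X'$ supporting $\alpha$. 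The finite surjective morphism $\tau$ together with the triviality of $\omega_{\mathcal X/V}$ yields an integral transfer $\tau_*\colon K_2(\mathcal X)\to K_2(\mathbb A^1_{U'})$, while the two sections $i_0\circ\Delta$ and $i_1$ of $\mathbb A^1_{U'}$, combined with the disjointness of $\mathcal D_0\setminus\Delta(U')$ and $\mathcal D_1$ from $\mathcal Y$, allow the $\mathbb A^1$-homotopy invariance of $K_2$ over the regular base $U'$ to compare $\alpha$ with its specialisation at $\{1\}\times U'$, which is zero. Showing that this transfer-plus-specialisation argument produces a genuinely integral, rather than merely torsion, nullhomotopy of $\alpha$ is the decisive step and the main new input beyond the formal machinery of Section~4.
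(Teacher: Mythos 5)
Your reduction of \eqref{thm:Gersten_is_Exact} to the injectivity of $\eta^*\colon K_i(\mathcal O)\to K_i(\mathcal K)$ for $i=1,2$ is the right skeleton, but the decisive step --- injectivity for $K_2$ --- is not actually proved in your proposal, and you say so yourself. The route you sketch (mod-$r$ injectivity for all $r$ coprime to $p$ plus ``divisibility'' of a class in the kernel, then a transfer built from the triviality of $\omega_{\mathcal X/V}$) does not close: prime-to-$p$ divisibility of $\alpha\in\ker\eta^*$ kills nothing, and the Euler-trace construction via $\omega_{\mathcal X/V}\cong\mathcal O_{\mathcal X}$ is the mechanism needed for Witt groups (Theorem \ref{cousin_BW}), not for $K$-theory, which already carries genuine transfers for finite flat morphisms. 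What the paper actually does is feed $K_2$ into the general injectivity theorem for continuous presheaves with transfers (Theorem \ref{inj_main}): the inputs are (i) the finite-flat trace structure on Thomason--Trobaugh $K$-theory (Example \ref{Tr_on_TT_K_th}), (ii) homotopy invariance and Mayer--Vietoris on $Sm'/V$, and (iii) the ``dim $1$ property'', i.e.\ injectivity of $K_2(W)\to K_2(\eta)$ for the semi-local irreducible Dedekind scheme $W=\Spec\,\mathcal O_{X',X'_v}$, proved in Lemma \ref{dim_one_K_th} via Matsumoto's theorem and surjectivity of the boundary $K_3(\eta)\to K_2(\underline w)$. This last ingredient is entirely absent from your sketch, yet it is exactly what lets one choose the function $f$ (hence the divisor $Y=\{f=0\}$) so that $\tilde a|_{X'_f}=0$ \emph{and} $Y$ contains no component of $X'_v$ --- without it, Corollary \ref{Homotopy_w_Traces} cannot even be invoked, since its hypotheses on $Y$ may fail.

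There is also a genuine inaccuracy in your treatment of \eqref{thm:Gersten_mod_n_is_Exact}: the Cousin complex $Cous(K^*(-;\mathbb Z/r),U;-2)$ of Theorem \ref{cousin_K_w_Z_rZ} is \eqref{thm:Gersten_w_Z/r} with terms $K_2(U;\mathbb Z/r)$ and $K_2(\eta;\mathbb Z/r)$, which differ from $K_2(U)/r$ and $K_2(\eta)/r$ by the nontrivial torsion groups ${}_rK_1(U)=\mu_r(\mathcal O)$ and ${}_rK_1(\eta)=\mu_r(\mathcal K)$ in the universal coefficient sequence; so Theorem \ref{cousin_K_w_Z_rZ} does not ``directly'' yield \eqref{thm:Gersten_mod_n_is_Exact}. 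The paper bridges this by the short exact sequence of complexes $0\to(\ref{thm:Gersten_mod_n_is_Exact})\to(\ref{thm:Gersten_w_Z/r})_2\to(\ref{mu_and_mu})\to 0$, where the quotient complex is exact because $\mu_r(\mathcal O)\cong\mu_r(\mathcal K)$ ($\mathcal O$ being normal); this extra comparison step, or some substitute for it, is needed in your argument as well. In short: the mod-$r$ part needs the coefficient-sequence comparison, and the integral part needs the transfer-plus-dim-$1$ mechanism of Theorem \ref{inj_main} and Lemma \ref{dim_one_K_th}, which your proposal neither supplies nor replaces.
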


\section{Suslin's exact sequence in mixed characteristic}

The following result gives a mixed characteristic version of the Suslin exact sequence.
\begin{thm}\label{thm:Suslin_exact}
Suppose $r$ is coprime to the prime $p$.
Then there is an exact sequence of the form
\begin{equation}
\label{Suslin_sequence_2}
0\to H^1_{Zar}(X,\underline K_2)/r \xrightarrow{\alpha} NH^{3}_{et}(X,\mu^{\otimes 2}_r) \xrightarrow{\beta} {_{r}}H^2_{Zar}(X,\underline K_2)\to 0,
\end{equation}
where $NH^3_{et}(X,\mu^{\otimes 2}_r)=ker[H^3_{et}(X,\mu^{\otimes 2}_r)\xrightarrow{\eta^*} H^3_{et}(\eta,\mu^{\otimes 2}_r)]$.
\end{thm}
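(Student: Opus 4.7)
The plan is to derive the sequence by combining a Bockstein for the sheaf $\underline K_2$ with a comparison between $\underline K_2/r$ and the \'etale cohomology sheaf $\underline{\mathcal H}^2_{et}(\mu_r^{\otimes 2})$, and then to read off the group $NH^3_{et}$ from the Bloch--Ogus spectral sequence provided by Corollary \ref{E_m_sheaf}. First, the standard Bockstein associated to the short exact sequence $0\to\underline K_2\xrightarrow{r}\underline K_2\to\underline K_2/r\to 0$ of Zariski sheaves on $X$ yields
$$0\to H^1_{Zar}(X,\underline K_2)/r\to H^1_{Zar}(X,\underline K_2/r)\to {}_r H^2_{Zar}(X,\underline K_2)\to 0.$$
It remains to identify $H^1_{Zar}(X,\underline K_2/r)$ with $NH^3_{et}(X,\mu_r^{\otimes 2})$.

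For this, Theorem \ref{cousin_H_et_mu} supplies the injectivity of $\eta^\ast\colon H^i_{et}(U_x,\mu_r^{\otimes 2})\to H^i_{et}(\eta,\mu_r^{\otimes 2})$ at each closed $x\in X$, so Corollary \ref{E_m_sheaf} gives the Bloch--Ogus spectral sequence
$$E_2^{p,q}=H^p_{Zar}(X,\underline{\mathcal H}^q_{et}(\mu_r^{\otimes 2}))\Rightarrow H^{p+q}_{et}(X,\mu_r^{\otimes 2}).$$
The residue identification $H^{q+c}_{et,z}(U,\mu_r^{\otimes 2})=H^{q-c}_{et}(k(z),\mu_r^{\otimes(2-c)})$ from the same theorem shows that the Gersten resolution of $\underline{\mathcal H}^q_{et}(\mu_r^{\otimes 2})$ has length at most $q$, whence $E_2^{p,q}=0$ for $p>q$. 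Next, the Merkurjev--Suslin norm residue isomorphism $K_2(F)/r\cong H^2_{et}(F,\mu_r^{\otimes 2})$, together with the Kummer isomorphism $F^\ast/r\cong H^1_{et}(F,\mu_r)$ and the obvious $\mathbb Z/r\cong H^0_{et}(F,\mathbb Z/r)$, provides a termwise isomorphism between the Gersten complex of $\underline K_2/r$ given by Theorem \ref{cousin_K2} and the length-$2$ Gersten complex of $\underline{\mathcal H}^2_{et}(\mu_r^{\otimes 2})$. Since both are flasque resolutions of their respective kernels and the residues in the two complexes match by the Bass--Tate--Kato compatibility, this induces an isomorphism of Zariski sheaves $\underline K_2/r\cong\underline{\mathcal H}^2_{et}(\mu_r^{\otimes 2})$, and therefore $H^p_{Zar}(X,\underline K_2/r)\cong H^p_{Zar}(X,\underline{\mathcal H}^2_{et}(\mu_r^{\otimes 2}))$ for every $p$.

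The final step reads off $NH^3_{et}(X,\mu_r^{\otimes 2})$ from the spectral sequence at total degree $3$. The vanishings $E_2^{2,1}=E_2^{3,0}=0$ force $F^2 H^3_{et}(X,\mu_r^{\otimes 2})=0$ in the Bloch--Ogus filtration. All differentials arriving at $E_r^{1,2}$ or leaving it land in zero groups (again by the $p>q$ vanishing), so $E_\infty^{1,2}=E_2^{1,2}$. The edge map $H^3_{et}(X,\mu_r^{\otimes 2})\to E_\infty^{0,3}\hookrightarrow H^3_{et}(\eta,\mu_r^{\otimes 2})$ factors the generic-fibre restriction, hence its kernel is $NH^3_{et}(X,\mu_r^{\otimes 2})=F^1 H^3_{et}(X,\mu_r^{\otimes 2})=E_\infty^{1,2}=H^1_{Zar}(X,\underline{\mathcal H}^2_{et}(\mu_r^{\otimes 2}))\cong H^1_{Zar}(X,\underline K_2/r)$. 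Chaining this with the Bockstein yields the exact sequence \eqref{Suslin_sequence_2}. The main obstacle is the compatibility of the Merkurjev--Suslin norm residue with the Bass--Tate residue: the termwise isomorphism on fields must be promoted to an isomorphism of Gersten complexes and hence of sheaves. This is classical in the work of Suslin and Kato, but in the present mixed characteristic setting it requires care across the closed fibre $X_v$, where the residues at codimension-one points of $X$ lying over $v$ must be checked to match the corresponding \'etale boundary maps.
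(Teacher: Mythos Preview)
Your overall strategy matches the paper's: first obtain a Bockstein-type short exact sequence relating $H^1_{Zar}(X,\underline K_2/r)$ to $H^*_{Zar}(X,\underline K_2)$, then identify $H^1_{Zar}(X,\underline K_2/r)$ with $NH^3_{et}(X,\mu_r^{\otimes 2})$ via Merkurjev--Suslin and the Bloch--Ogus spectral sequence. The second half, including the residue-compatibility concern you flag at the end, is essentially the paper's argument (the paper simply asserts that the norm residue homomorphisms identify the mod-$r$ Gersten row with the Bloch--Ogus complex and does not dwell on that compatibility either).

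The first step, however, has a real gap. The sequence $0\to\underline K_2\xrightarrow{r}\underline K_2\to\underline K_2/r\to 0$ is \emph{not} short exact as a sequence of Zariski sheaves: the stalk of $\underline K_2$ at the generic point is $K_2(\mathcal K)$, and this carries nontrivial $r$-torsion in general (Tate symbols $\{\zeta,a\}$ with $\zeta\in\mu_r(\mathcal K)$; cf.\ Suslin's description of torsion in $K_2$ of fields). So multiplication by $r$ is not injective on $\underline K_2$ and there is no sheaf Bockstein long exact sequence to invoke. What you actually have is the four-term exact sequence $0\to{}_r\underline K_2\to\underline K_2\xrightarrow{r}\underline K_2\to\underline K_2/r\to 0$, and splicing its two halves into the short exact sequence you want requires control of $H^2_{Zar}(X,{}_r\underline K_2)$.

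The paper handles this by working directly at the level of the Gersten resolutions supplied by Theorem~\ref{cousin_K2}: one writes the diagram whose rows are the Gersten complexes for ${}_rK_2$, $K_2$, $K_2$, and $K_2/r$, with termwise exact columns $0\to{}_rK_*\to K_*\xrightarrow{r}K_*\to K_*/r\to 0$. The chase producing the short exact sequence then rests on the elementary fact that ${}_rK_0(k(y))={}_r\mathbb Z=0$, so the top row is a two-term complex whose $H^2$ vanishes; this vanishing is exactly what replaces the injectivity you assumed. You could repair your argument by establishing $H^{\geq 2}_{Zar}(X,{}_r\underline K_2)=0$ from a length-one flasque resolution of ${}_r\underline K_2$, but that amounts to the same computation.
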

\begin{thm}\label{NH3=torsion_CH}
Suppose additionally that the ring $A$ is henzelian, the residue field of $A$ is finite and let $l$ be a prime different of $p$.
Suppose this time that $X$ is $A$-smooth projective irreducible and has relative dimension $d=2$. Then \\
a) the map ${_{\{l\}}}H^2_{Zar}(X,\underline K_2)\to {_{\{l\}}}CH^2(X_v)$ is an isomorphism;\\
b) the map $\beta: NH^{3}_{et}(X,\mathbb Q_l/\mathbb Z_l(2))\to {_{\{l\}}}H^2_{Zar}(X,\underline K_2)$ is an isomorphism;\\
c) the map $NH^{3}_{et}(X,\mathbb Q_l/\mathbb Z_l(2))\to NH^{3}_{et}(X_v,\mathbb Q_l/\mathbb Z_l(2))$ is an isomorphism;\\
d) the group ${_{\{l\}}}H^2_{Zar}(X,\underline K_2)$ is finite.
\end{thm}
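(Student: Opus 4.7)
The plan is to prove (c) first by proper base change, to use the classical results on smooth projective surfaces over finite fields for Suslin's sequence on $X_v$, and then to transport the vanishing of $H^1_{Zar}(X_v,\underline K_2)\otimes\mathbb Q_l/\mathbb Z_l$ up to $X$ via Gabber--Suslin rigidity; part (b) then drops out of Theorem \ref{thm:Suslin_exact} applied to $X$, and (a), (d) follow by assembling the resulting isomorphisms with the classical identifications on $X_v$.

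For (c), for every $\nu$ proper base change for the proper morphism $X\to V$, combined with the Hochschild--Serre spectral sequences for $\mathrm{Gal}(k(v)^{sep}/k(v))\cong\hat{\mathbb Z}$ on $X$ and on $X_v$, yields $H^n_{et}(X,\mu_{l^\nu}^{\otimes 2})\cong H^n_{et}(X_v,\mu_{l^\nu}^{\otimes 2})$ for every $n$. Passing to the colimit in $\nu$ gives the corresponding isomorphism with coefficients $\mathbb Q_l/\mathbb Z_l(2)$. To restrict this isomorphism in degree three to the $N$-parts I would invoke the Bloch--Ogus identification of $NH^3(-,\mathbb Q_l/\mathbb Z_l(2))$ with $H^1_{Zar}(-,\mathcal H^2)$, where $\mathcal H^2$ is the Zariski sheafification of $U\mapsto H^2_{et}(U,\mathbb Q_l/\mathbb Z_l(2))$, and check term-by-term compatibility of the Bloch--Ogus resolutions on $X$ and $X_v$ via proper base change on the local cohomology presheaves.

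For (b), Theorem \ref{thm:Suslin_exact} with $r=l^\nu$, passed to the colimit in $\nu$, yields both on $X$ and on $X_v$ an exact sequence
$$0\to H^1_{Zar}(-,\underline K_2)\otimes\mathbb Q_l/\mathbb Z_l \xrightarrow{\alpha} NH^3_{et}(-,\mathbb Q_l/\mathbb Z_l(2))\xrightarrow{\beta} {_{\{l\}}}H^2_{Zar}(-,\underline K_2)\to 0.$$
On $X_v$ the left term vanishes and ${_{\{l\}}}H^2_{Zar}(X_v,\underline K_2)={_{\{l\}}}CH_0(X_v)$ is finite, by Colliot-Th\'el\`ene--Sansuc--Soul\'e (\cite{C-T/S 87}). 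On $X$, the Gersten resolution of $\underline K_2$ from Theorem \ref{cousin_K2} has length two, so $H^3_{Zar}(X,\underline K_2)=0$, and the Bockstein long exact sequence in Zariski cohomology yields short exact sequences $0\to H^1_{Zar}(X,\underline K_2)/l^\nu\to H^1_{Zar}(X,\underline K_2/l^\nu)\to {_{l^\nu}}H^2_{Zar}(X,\underline K_2)\to 0$. Gabber--Suslin rigidity for the henselian pair $(X,X_v)$, propagated to every affine Zariski open $(U,U_v)\subseteq(X,X_v)$, identifies $H^1_{Zar}(X,\underline K_2/l^\nu)$ with $H^1_{Zar}(X_v,\underline K_2/l^\nu)$. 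Taking the colimit in $\nu$ then realizes the divisible group $H^1_{Zar}(X,\underline K_2)\otimes\mathbb Q_l/\mathbb Z_l$ as a subgroup of the finite group ${_{\{l\}}}CH_0(X_v)$, forcing it to vanish. Hence $\beta$ is an isomorphism on $X$, which is (b); then (a) follows from (b), (c) and the classical identification $NH^3_{et}(X_v,\mathbb Q_l/\mathbb Z_l(2))\cong{_{\{l\}}}CH_0(X_v)$ on $X_v$, while (d) is immediate from (a) and the finiteness of ${_{\{l\}}}CH_0(X_v)$.

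The main obstacle will be setting up Gabber--Suslin rigidity cleanly in this geometric framework: the henselian pair property must propagate to every Zariski open $(U,U_v)$, and the resulting $K$-theory isomorphism with $\mathbb Z/l^\nu$-coefficients must be compatible with the Gersten--Cousin filtrations used in Theorem \ref{cousin_K_w_Z_rZ}. A secondary subtlety is, in (c), the restriction of the \'etale proper-base-change isomorphism to the $N$-parts, which requires term-by-term compatibility of the Bloch--Ogus resolution with proper base change.
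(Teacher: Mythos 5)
Your overall skeleton (compare the mixed-characteristic Suslin sequence of Theorem \ref{thm:Suslin_exact} on $X$ with the classical one on $X_v$, feed in the known facts over the finite residue field, use proper base change for the proper morphism $X\to V$) is the same as the paper's, but the two transport mechanisms you rely on do not work as stated. First, Gabber--Suslin rigidity: for an affine Zariski open $U\subseteq X$ the pair $(\Gamma(U,\mathcal O_U),\pi\Gamma(U,\mathcal O_U))$ is essentially never henselian --- henselianity of $A$ does not propagate to finite type $A$-algebras (already $\mathbb A^1_A$ fails) --- so rigidity gives no isomorphism $K_*(U;\mathbb Z/l^\nu)\cong K_*(U_v;\mathbb Z/l^\nu)$, and a fortiori no identification $H^1_{Zar}(X,\underline K_2/l^\nu)\cong H^1_{Zar}(X_v,\underline K_2/l^\nu)$. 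Note moreover that via $H^1_{Zar}(-,\underline K_2/l^\nu)\cong NH^3_{et}(-,\mu^{\otimes 2}_{l^\nu})$ this identification is essentially assertion (c) at finite level, so it cannot be an independent input. Hence your proof of the crucial vanishing $H^1_{Zar}(X,\underline K_2)\otimes\mathbb Q_l/\mathbb Z_l=0$ collapses. The same non-properness/non-henselianity problem affects your proof of (c): the local schemes appearing in the Bloch--Ogus resolutions are neither proper over $V$ nor henselian along their closed fibres, so neither the proper base change theorem nor Gabber's affine analogue \cite{Ga} gives the "term-by-term compatibility" you invoke. What proper base change does give for free is only the \emph{injectivity} of $NH^{3}_{et}(X,\mathbb Q_l/\mathbb Z_l(2))\to NH^{3}_{et}(X_v,\mathbb Q_l/\mathbb Z_l(2))$, as the restriction of the isomorphism $H^3_{et}(X,\cdot)\cong H^3_{et}(X_v,\cdot)$; surjectivity onto the $N$-part is exactly the delicate point, and your mechanism does not deliver it.

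The paper's argument avoids both pitfalls and shows how to repair yours: in the commutative diagram of the two Suslin sequences, one quotes \cite[Theorem 2.11]{Pa} for the closed fibre (giving $H^1_{Zar}(X_v,\underline K_2)\otimes\mathbb Q_l/\mathbb Z_l=0$ and that $\beta_v$ is an isomorphism), and uses only the injectivity of $\psi$ (proper base change); the square $\psi\circ\alpha=\alpha_v\circ\phi=0$ together with the injectivity of $\alpha$ and $\psi$ then forces $H^1_{Zar}(X,\underline K_2)\otimes\mathbb Q_l/\mathbb Z_l=0$, which is (b) --- no $K$-theoretic rigidity is needed anywhere. Assertions (a) and (c) are then completed by the diagram, the genuinely remaining content being the surjectivity of $\rho$ (equivalently of $\psi$), which is precisely the point your "term-by-term" step was supposed to, but does not, settle; and (d) is proved in the paper within the proof of Theorem \ref{NH3=torsion_CH_2} via the Weil conjectures, rather than from finiteness of ${}_{\{l\}}CH_0(X_v)$ (which is true, but due to Colliot-Th\'el\`ene--Sansuc--Soul\'e, not the reference \cite{C-T/S 87} you cite). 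So: delete the rigidity step, obtain the vanishing of $H^1_{Zar}(X,\underline K_2)\otimes\mathbb Q_l/\mathbb Z_l$ from the injectivity of $\psi$ as above, and supply a genuine argument for the surjectivity of $\rho$ (or $\psi$) before deducing (a) and (c).
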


\begin{rem}\label{connectivity}
Since $X$ is smooth projective over $V$ and irreducible and $A$ is henzelian it follows
that the closed fibre $X_v$ is
{\it irreducible too}.
To prove this one has to use the Steiner
decomposition of the morphism $p: X\to V$ as $X\xrightarrow{q} W\xrightarrow{\pi} V$,
where $q$ is surjective with connected fibres and $\pi$ is finite surjective.
Since $q$ is projective and surjective and $X$ is irreducible it follows that
$W$ is irreducible. Since $V$ is local henzelian and $\pi$ is finite it follows
that $W$ has a unique closed point, say $w$. Since the fibres of $q$ are connected
it follows that $X_v=p^{-1}(v)=q^{-1}(w)$ is connected. Since $X_v$ is $v$-smooth,
hence $X_v$ is irreducible.
\end{rem}


The following colollary is a version of the Roitman theorem.
\begin{cor}\label{thm:Roitman}
Let $A$,$X$,$d$,$l$ be as in Theorem \ref{NH3=torsion_CH}. Suppose the residue field of $A$ is an algebraic closure of the finite field $\mathbb F_p$.
Then the natural map
$${_{\{l\}}}H^2_{Zar}(X,\underline K_2)\to {_{\{l\}}}Alb(X/V)$$
is an isomorphism. There are equalities
$NH^{3}_{et}(X_v,\mathbb Q_l/\mathbb Z_l(2))=H^{3}_{et}(X_v,\mathbb Q_l/\mathbb Z_l(2))$
and
$NH^{3}_{et}(X,\mathbb Q_l/\mathbb Z_l(2))=H^{3}_{et}(X,\mathbb Q_l/\mathbb Z_l(2))$
and the map $\beta$ is an isomorphism
$$\beta: H^{3}_{et}(X,\mathbb Q_l/\mathbb Z_l(2))=NH^{3}_{et}(X,\mathbb Q_l/\mathbb Z_l(2))\to {_{\{l\}}}H^2_{Zar}(X,\underline K_2).$$
\end{cor}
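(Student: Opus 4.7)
The strategy is to pass to the closed fibre $X_v$, apply the classical Roitman theorem for the smooth projective surface $X_v$ over $\bar{\mathbb F}_p$, and then transport the conclusions back to $X/V$ using Theorem~\ref{NH3=torsion_CH} and proper base change. Nothing new has to be proved on $X_v$ or on $V$ separately; the content of the corollary is that all the ingredients fit together.

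For the Albanese isomorphism, I would first apply Theorem~\ref{NH3=torsion_CH}(a) to obtain ${_{\{l\}}}H^2_{Zar}(X,\underline K_2)\xrightarrow{\sim}{_{\{l\}}}CH^2(X_v)$. Since $X_v$ is a smooth projective surface over the algebraically closed field $\bar{\mathbb F}_p$, one has $CH^2(X_v)=CH_0(X_v)$, and the degree map splits off $\mathbb Z$ (using any $\bar{\mathbb F}_p$-rational point), so ${_{\{l\}}}CH^2(X_v)={_{\{l\}}}CH_0(X_v)_0$. The classical Roitman theorem then yields ${_{\{l\}}}CH_0(X_v)_0\xrightarrow{\sim}{_{\{l\}}}Alb(X_v)(\bar{\mathbb F}_p)$. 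Because $A$ is henselian with algebraically closed residue field and $l\neq p$, the $l$-primary torsion subscheme $Alb(X/V)[l^\infty]$ is \'{e}tale over $V$, and each $\bar{\mathbb F}_p$-point of the closed fibre $Alb(X_v)[l^\infty]$ lifts uniquely to a $V$-point, so ${_{\{l\}}}Alb(X/V)={_{\{l\}}}Alb(X_v)(\bar{\mathbb F}_p)$. Composing these three isomorphisms produces the required ${_{\{l\}}}H^2_{Zar}(X,\underline K_2)\cong {_{\{l\}}}Alb(X/V)$.

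For the \'{e}tale-cohomology assertions I would use proper base change for the proper morphism $X\to V$ with the torsion coefficient sheaf $\mathbb Q_l/\mathbb Z_l(2)$: since $V$ is strictly henselian and the coefficients are prime to $p$, one has $H^n_{et}(X,\mathbb Q_l/\mathbb Z_l(2))\cong H^n_{et}(X_v,\mathbb Q_l/\mathbb Z_l(2))$ for every $n$. The generic point $\eta_v$ of $X_v$ is the spectrum of a field of transcendence degree $2$ over $\bar{\mathbb F}_p$, so its $l$-cohomological dimension is at most $2$ and $H^3_{et}(\eta_v,\mathbb Q_l/\mathbb Z_l(2))=0$. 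Hence the restriction to $\eta_v$ annihilates $H^3_{et}(X_v,\mathbb Q_l/\mathbb Z_l(2))$, giving $NH^3_{et}(X_v,\mathbb Q_l/\mathbb Z_l(2))=H^3_{et}(X_v,\mathbb Q_l/\mathbb Z_l(2))$. Combining with Theorem~\ref{NH3=torsion_CH}(c) (which identifies $NH^3_{et}(X,-)\cong NH^3_{et}(X_v,-)$) and with proper base change in degree $3$, one gets $NH^3_{et}(X,\mathbb Q_l/\mathbb Z_l(2))=H^3_{et}(X,\mathbb Q_l/\mathbb Z_l(2))$. Theorem~\ref{NH3=torsion_CH}(b) already asserts that $\beta$ restricted to $NH^3$ is an isomorphism onto ${_{\{l\}}}H^2_{Zar}(X,\underline K_2)$, so the last claim of the corollary follows.

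The main obstacle is not a computation but a naturality check: one must verify that the map called \emph{natural} in the statement coincides with the composite of Theorem~\ref{NH3=torsion_CH}(a), the classical Roitman map on $X_v$, and the \'{e}tale-lifting of $l$-primary torsion of $Alb(X/V)$ from the closed fibre. This amounts to knowing that $Alb(X/V)$ is functorially the abelian $V$-scheme whose closed fibre is $Alb(X_v)$ and whose relative Albanese map is compatible with specialization of $0$-cycles. I expect this compatibility to be the only non-formal ingredient; the rest is a direct assembly of results already stated in the preprint together with the standard proper base change and cohomological dimension facts for function fields over $\bar{\mathbb F}_p$.
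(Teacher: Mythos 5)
Your proposal is correct and follows essentially the same route as the paper: vanishing of $H^3$ of the function field of $X_v$ (cohomological dimension $2$ over $\bar{\mathbb F}_p$) plus proper base change and Theorem \ref{NH3=torsion_CH}(b),(c) for the $NH^3$ and $\beta$ statements, and for the Albanese claim the composite of Theorem \ref{NH3=torsion_CH}(a), the classical Roitman theorem on the closed fibre, and the henselian lifting of $l$-primary Albanese torsion, which is exactly the paper's commutative square \eqref{Big_Diagram_3} with $\rho$, $\beta_v$, $\epsilon$.
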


\begin{thm}\label{NH3=torsion_CH_2}
Let $A$,$X$,$l$ be as in Theorem \ref{NH3=torsion_CH}, but this time $d=dim_V X$ is arbitrary.
Suppose the residue field of $A$ is an algebraic extension of the finite field $\mathbb F_p$.
Then \\
a) the map $\beta: NH^{3}_{et}(X,\mathbb Q_l/\mathbb Z(2))\to {_{\{l\}}}H^2_{Zar}(X,\underline K_2)$ is an isomorphism;\\
b) the map $NH^{3}_{et}(X,\mathbb Q_l/\mathbb Z(2))\to NH^{3}_{et}(X_v,\mathbb Q_l/\mathbb Z(2))$ is injecive;\\
c) the map ${_{\{l\}}}H^2_{Zar}(X,\underline K_2)\to {_{\{l\}}}CH^2(X_v)$ is injective;\\
d) if the residue field $\bar A$ of $A$ is finite, then the group ${_{\{l\}}}H^2_{Zar}(X,\underline K_2)$ is finite.
\end{thm}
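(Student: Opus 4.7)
The plan is to parallel the $d=2$ argument of Theorem \ref{NH3=torsion_CH}. Applying Theorem \ref{thm:Suslin_exact} with $r=l^n$ and passing to the direct limit over $n$, one obtains the short exact sequence
\begin{equation*}
0 \to H^1_{Zar}(X, \underline{K}_2) \otimes \mathbb{Q}_l/\mathbb{Z}_l \xrightarrow{\alpha} NH^3_{et}(X, \mathbb{Q}_l/\mathbb{Z}_l(2)) \xrightarrow{\beta} {_{\{l\}}}H^2_{Zar}(X, \underline{K}_2) \to 0.
\end{equation*}
Part (a) therefore reduces to the $l$-divisibility of $H^1_{Zar}(X, \underline{K}_2)$. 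To obtain this divisibility I would use the Bloch--Ogus spectral sequence for \'etale cohomology (available through Theorem \ref{cousin_H_et_mu}) together with the Merkurjev--Suslin identification $\mathcal{H}^2(\mu_{l^n}^{\otimes 2}) \cong \underline{K}_2/l^n$ to reformulate the divisibility as a vanishing in \'etale cohomology; proper base change on the henselian pair $(X,X_v)$ then transports the question to $X_v$, which is smooth projective over the algebraic extension $\bar A/\mathbb{F}_p$, and for which the $l$-divisibility of $H^1(X_v,\underline K_2)$ follows from the Colliot-Th\'el\`ene--Raskind circle of results (compare \cite{C-T/S 87}, \cite{C}, \cite{CHK}).

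For (b), proper base change on the henselian pair $(X,X_v)$ yields $H^n_{et}(X,\mathbb Q_l/\mathbb Z_l(2)) \xrightarrow{\sim} H^n_{et}(X_v,\mathbb Q_l/\mathbb Z_l(2))$ for every $n$. The inclusion $NH^3(X) \hookrightarrow H^3(X) = H^3(X_v)$ is injective by definition; to see that its image lies in $NH^3(X_v)$ one invokes Gersten injectivity at the codimension-$1$ point $\eta_{X_v}$ of $X$ (Theorem \ref{cousin_H_et_mu} applied to the DVR $\mathcal{O}_{X,\eta_{X_v}}$), which forces any class killed on $\eta_X$ to be killed on $\eta_{X_v}$ as well. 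For (c), Bloch's formula gives $H^2_{Zar}(X_v,\underline K_2) = CH^2(X_v)$; the equicharacteristic analog of (a) applied to $X_v$ shows $\beta_{X_v}$ is an isomorphism, and functoriality of the Suslin sequence produces the commutative square
$$\begin{array}{ccc}
NH^3_{et}(X, \mathbb{Q}_l/\mathbb{Z}_l(2)) & \xrightarrow{\beta_X,\ \cong} & {_{\{l\}}}H^2_{Zar}(X, \underline{K}_2) \\
\downarrow & & \downarrow \\
NH^3_{et}(X_v, \mathbb{Q}_l/\mathbb{Z}_l(2)) & \xrightarrow{\beta_{X_v},\ \cong} & {_{\{l\}}}CH^2(X_v)
\end{array}$$
whose left vertical arrow is injective by (b), forcing the right vertical arrow to be injective as well. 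Finally, (d) follows from (c) together with the finiteness of ${_{\{l\}}}CH^2(X_v)$ for smooth projective varieties over a finite field, a consequence of the Colliot-Th\'el\`ene--Sansuc--Soul\'e and Kato--Saito type finiteness theorems.

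The main obstacle is the $l$-divisibility of $H^1_{Zar}(X,\underline K_2)$ needed for (a): in mixed characteristic the classical Colliot-Th\'el\`ene--Raskind argument does not transfer verbatim, so one must bootstrap from the closed fibre via proper base change for torsion \'etale cohomology combined with the Bloch--Ogus spectral sequence and the Merkurjev--Suslin isomorphism. The rest of the proof is essentially bookkeeping built upon this divisibility, proper base change, and the equicharacteristic analog of (a) for $X_v$.
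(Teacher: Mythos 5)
Your overall strategy for (a)--(c) is essentially the paper's: the paper proves these by ``repeating'' the proof of Theorem \ref{NH3=torsion_CH}, i.e.\ by placing the mixed-characteristic Suslin sequence of Theorem \ref{thm:Suslin_exact} for $X$ over the classical Suslin sequence for the closed fibre $X_v$, using proper base change for the henselian pair to get injectivity of $\psi: NH^3_{et}(X,\mathbb Q_l/\mathbb Z_l(2))\to NH^3_{et}(X_v,\mathbb Q_l/\mathbb Z_l(2))$, and feeding in the closed-fibre input ($H^1_{Zar}(X_v,\underline K_2)\otimes\mathbb Q_l/\mathbb Z_l=0$ and $\beta_v$ an isomorphism, quoted from \cite{Pa} in the surface case and from the same circle of results in general). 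In particular the ``main obstacle'' you flag --- $l$-divisibility of $H^1_{Zar}(X,\underline K_2)$ --- is not an extra difficulty to be bootstrapped separately: it drops out of the commutative diagram, since $\psi\circ\alpha$ factors through $H^1_{Zar}(X_v,\underline K_2)\otimes\mathbb Q_l/\mathbb Z_l=0$ and $\psi$, $\alpha$ are injective; your sketch via Bloch--Ogus/Merkurjev--Suslin plus proper base change is exactly this argument in disguise, and the only real point to nail down (here and in your square for (c)) is the compatibility of the two Suslin sequences under restriction to $X_v$, which the paper also asserts via its diagram. Your treatment of (b) is in fact more explicit than the paper's (the Gersten injectivity over the DVR $\mathcal O_{X,\eta_{X_v}}$ is covered by Lemma \ref{dim_one_Et_Coh} rather than Theorem \ref{cousin_H_et_mu} as stated, a harmless citation shift). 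The genuine divergence is (d): you deduce finiteness from (c) plus the Colliot-Th\'el\`ene--Sansuc--Soul\'e finiteness of the prime-to-$p$ torsion of $CH^2$ of a smooth projective variety over a finite field, whereas the paper proves finiteness of $H^{3}_{et}(X_v,\mathbb Q_l/\mathbb Z_l(2))$ directly: by the Weil conjectures \cite{D74} the group $H^3_{et}(\bar X_v,\mathbb Q_l(2))$ has no Frobenius (co)invariants, so $H^3_{et}(X_v,\mathbb Q_l/\mathbb Z_l(2))\cong{_{\{l\}}}H^4_{et}(X_v,\mathbb Z_l(2))$, which is finite by Hochschild--Serre over the finite residue field and finite generation of the $\mathbb Z_l$-cohomology of $\bar X_v$ \cite{D}. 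Both routes are legitimate; yours is shorter but outsources to a deeper external theorem, while the paper's stays within proper base change, Weil I and finite generation and directly bounds $NH^3$ itself.
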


\section{Rationally isotropic quadratic spaces}
The results of this section
extend to the mixed characteristic case isotropy and purity theorems for {\it quadratic spaces}
proven in \cite{P}, \cite{PP1}, \cite{PP2}.
Following the method of \cite{PP2} these results are derived in this section
using
Corollary \ref{Homotopy_w_Traces} and Theorem \ref{cousin_BW}.
A proof of Theorem \ref{cousin_BW} is given in Section \ref{sect:cousin_BW}.

Let $A$, $p>0$, $d\geq 1$, $X$, $x_1,x_2,\dots,x_n\in X$, $\mathcal O$ and $U$  be as in Section \ref{agreements}.
We suppose in this section that $1/2$ is in $A$. Write $\mathcal K$ for the fraction field of the ring $\mathcal O$.


We refer to \cite{P} for definitions of a quadratic space and of an isotropic quadratic space.
Here we just indicate the following: if $(Q,\psi)$ is a quadratic space over a Dedekind semi-local domain $R$ with
a fraction field $L$ and $(Q,\psi)\otimes_R L$ is isotropic over $L$, then $(Q,\psi)$ is isotropic.

\begin{thm}[Rationally isotropic spaces are locally isotropic]
\label{Rat_Iso_Loc_Iso}
(1) Suppose each irreducible component of $X_v$ contains at least one of the point $x_i$'s.
Let $(P,\varphi)$ be a quadratic space over $\mathcal O$. If
$(P,\varphi)\otimes_{\mathcal O} \mathcal K$ is isotropic over $\mathcal K$, then
$(P,\varphi)$ is isotropic over $\mathcal O$, that is there exists a unimodular vector $v\in V$
with $\varphi (v)=0$.

(2) If $X_v$ is irreducible, then the hypotheses of the item (1) hold automatically. So,
each quadratic space over $\mathcal O$, which is isotropic over
$\mathcal K$, is isotropic over $\mathcal O$.
\end{thm}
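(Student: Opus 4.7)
The plan is to follow the Panin--Pimenov method, combining the geometric correspondence of Corollary~\ref{Homotopy_w_Traces} with the exactness/purity of the Gersten--Witt complex (Theorem~\ref{cousin_BW}) to transport generic isotropy down to the semi-local ring $\mathcal O$.

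Part~(2) is immediate. Every closed point of $X$ maps to the closed point $v\in V$, hence lies in $X_v$; if $X_v$ is irreducible it is its own unique component and trivially contains each $x_i$, so the hypothesis of (1) is satisfied.

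For part~(1), I would first extend $(P,\varphi)$ to a quadratic space $(P',\varphi')$ defined on an affine Zariski open $X'\subseteq X$ containing $x_1,\dots,x_n$, using standard Noetherian approximation since $\mathcal O$ is a filtered limit of coordinate rings of such neighborhoods and a quadratic space is finitely presented data. The generic isotropy of $\varphi_{\mathcal K}$ yields a unimodular isotropic vector over $\mathcal K$; clearing denominators produces a section of $P'$ which is unimodular and isotropic away from a closed subscheme $Y\subset X'$ of pure codimension $1$. The hypothesis that each irreducible component of $X_v$ meets $\{x_1,\dots,x_n\}$ guarantees, after shrinking $X'$ around the $x_i$'s, that $Y$ contains no generic point of $X'_v$. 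Thus $Y$ fits the hypotheses of Corollary~\ref{Homotopy_w_Traces}, and on $X'\setminus Y$ we have a splitting
$$
(P',\varphi')|_{X'\setminus Y}\ \cong\ \mathbb H\perp (P'',\varphi'').
$$

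Next I apply Corollary~\ref{Homotopy_w_Traces} to obtain an affine neighborhood $U'$ of the $x_i$'s inside $X'$ and the diagram $\Aff^1_{U'}\xleftarrow{\tau}\mathcal X\xrightarrow{p_X}X'$ with $\omega_{\mathcal X/V}\cong\mathcal O_{\mathcal X}$, section $\Delta$, and the stated decomposition of the fibres over $\{0\}\times U'$ and $\{1\}\times U'$. Since $\omega_{\Aff^1_{U'}/V}$ is trivial as well, a trivialization of $\omega_{\mathcal X/\Aff^1_{U'}}$ exists, giving a Scharlau transfer $\tau_*\colon W(\mathcal X)\to W(\Aff^1_{U'})$. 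Put $\Theta:=\tau_*[\,p_X^*(P',\varphi')\,]\in W(\Aff^1_{U'})$. Homotopy invariance of Witt groups over the regular base $U'$, a formal consequence of Theorem~\ref{cousin_BW} applied to $\Aff^1_{U'}$, gives $i_0^*\Theta=i_1^*\Theta$ in $W(U')$. Base change for the transfer computes $i_1^*\Theta=(\tau|_{\mathcal D_1})_*[\varphi_{\mathcal X}|_{\mathcal D_1}]$; since $\mathcal D_1\cap\mathcal Y=\emptyset$ the form $\varphi_{\mathcal X}|_{\mathcal D_1}$ inherits the hyperbolic splitting on $\mathcal X\setminus\mathcal Y$, so $i_1^*\Theta$ equals the Witt class of a form of rank $r-2$. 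At $t=0$ the decomposition $\mathcal D_0=\Delta(U')\sqcup\mathcal D_0'$ with $\mathcal D_0'\cap\mathcal Y=\emptyset$ splits $i_0^*\Theta$ as $[\,\varphi|_{U'}\,]$ (coming from $\Delta$, since $p_X\circ\Delta=\mathrm{can}$) plus the transfer of a class with hyperbolic summand. Equating the two and cancelling the symmetric hyperbolic contributions identifies $[\varphi|_{U'}]$ in $W(U')$ with the class of a quadratic space $\psi$ of rank $r-2$ defined on $U'$. Restricting to $\mathcal O$ and applying Witt cancellation over the semilocal regular ring $\mathcal O$ (valid since $1/2\in\mathcal O$) gives $(P,\varphi)\cong\mathbb H\perp\psi$, producing the desired unimodular isotropic vector.

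The main obstacle lies in the transfer computation: one must arrange the bookkeeping so that the hyperbolic contributions appearing on both sides of the $\Aff^1$-invariance identity cancel to leave $[\varphi|_{\mathcal O}]$ equal to the Witt class of a form of rank $r-2$ defined over $\mathcal O$ itself, rather than only over $\mathcal O[1/y]$ where $y$ cuts out $Y$. This is exactly where the triviality of $\omega_{\mathcal X/V}$ (which makes $\tau_*$ well-defined on Witt groups) and the explicit decomposition $\mathcal D_0=\Delta(U')\sqcup\mathcal D_0'$ with $\mathcal D_0'\cap\mathcal Y=\emptyset$ play their role; combined with the injectivity $W(\mathcal O)\hookrightarrow W(\mathcal K)$ from Theorem~\ref{cousin_BW}, they force $\psi$ to be genuinely defined on $\mathcal O$.
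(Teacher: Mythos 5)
Your reduction of (2) to (1) and the spreading-out of $(P,\varphi)$ to $(P',\varphi')$ on $X'$ match the paper, but the core of your argument has a genuine gap: you work entirely with Witt classes, and Witt-class information does not yield isotropy of the given rank-$r$ form. The homotopy-invariance identity $i_0^*\Theta=i_1^*\Theta$ only gives, after discarding the metabolic pieces, an equality in $W(U')$ of the form $u\cdot[\varphi|_{U'}]=\tau_{1*}[\beta_1]-\tau'_{0*}[\beta_0]$, where $\beta_1,\beta_0$ are the rank-$(r-2)$ complements on $\mathcal D_1$ and $\mathcal D'_0$; these transfers are classes of forms of rank $(r-2)\cdot\deg(\mathcal D_1/U')$ and $(r-2)\cdot\deg(\mathcal D'_0/U')$, so your step ``cancelling the hyperbolic contributions identifies $[\varphi|_{U'}]$ with the class of a quadratic space $\psi$ of rank $r-2$ defined on $U'$'' is unjustified, and the subsequent ``Witt cancellation gives $(P,\varphi)\cong\mathbb H\perp\psi$'' conflates equality of Witt classes with an isometry exhibiting a hyperbolic summand (over a semilocal ring one cannot read off isotropy from a Witt-class equality without rank control and a Witt-decomposition/uniqueness statement you do not supply). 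This is exactly the difficulty that makes the theorem nontrivial, and the paper avoids it by arguing at the level of the quadratic space itself: it uses Corollary \ref{Homotopy_w_Traces_2} (not \ref{Homotopy_w_Traces}), whose property (5) makes $p_X^*(P',\varphi')\cong p^*(P,\varphi)$ as quadratic spaces, so that $(P,\varphi)$ becomes genuinely isotropic over the finite \'etale $U$-schemes $\mathcal D_1$ and $\mathcal D'_0$; the degree identity $[\mathcal D_1:U]=1+[\mathcal D'_0:U]$ forces one of these covers to have odd degree, and the Springer-type odd-degree descent \cite[Theorem 1.1]{PP1} then gives isotropy over $\mathcal O$. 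No Euler/Scharlau transfer in $W$ is used for this theorem; Theorem \ref{cousin_BW} enters only for the purity corollary.

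Two further points. First, your justification for why $Y$ avoids the generic points of $X'_v$ is off: shrinking $X'$ around the $x_i$'s cannot help, since $X'$ must keep containing those generic points; the correct argument (as in the paper) is that isotropy over $\mathcal K$ implies isotropy over the semilocal Dedekind ring $\mathcal O_{X',X'_v}$ of Notation \ref{inj_to_gen_point}, so the isotropic unimodular vector extends over a neighborhood of the generic points of $X'_v$ and $f$ can be chosen nonvanishing there. Second, you never treat the case of finite residue field $k(v)$: Corollary \ref{Homotopy_w_Traces_2} requires $k(v)$ infinite, and the paper handles finite $k(v)$ by a tower of odd-degree finite \'etale extensions $A\subset A_1\subset\cdots\subset A_\infty$, descent to a finite stage, and Scully's Artin--Springer theorem \cite[Theorem 5.1]{Sc}; your proposal (which would need these or some substitute) is silent on this case.
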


To state the first corollary of the theorem we need to recall
the notion of unramified spaces.
Let $R$ be a Noetherian domain and $L$ be its fraction field.
Recall that a quadratic space $(Q, \psi)$ over $L$
is {\it unramified over} $R$ \ if for every height one prime ideal
$\wp$ of $R$ there exists a quadratic space
$(Q_{\mathfrak p},\varphi_{\wp})$
over $R_{\wp}$ such that
the spaces
$(Q_{\wp},\varphi_{\wp}) \otimes_{R_{\wp}} L$ and $(Q,\psi)$ are isomorphic.

\begin{cor}[Purity for quadratic spaces]
\label{Purity_Q}
(1) Suppose each irreducible component of $X_v$ contains at least one of the point $x_i$'s.
Let $(Q,\psi)$ be a quadratic space over $\mathcal K$ which is unramified over
$\mathcal O$. Then there exists a quadratic space $(P,\varphi)$ over $\mathcal O$
extending the space $(Q,\psi)$, that is the spaces
$(P,\varphi)\otimes_{\mathcal O} \mathcal K$ and $(Q,\psi)$ are isomorphic.

(2) If $X_v$ is irreducible, then the hypotheses of the item (1) hold automatically. So,
each quadratic space over $\mathcal K$, which is unramified over
$\mathcal O$, can be lifted to a quadratic space over $\mathcal O$.
\end{cor}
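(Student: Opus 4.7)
The plan is to reduce the purity statement for quadratic spaces to two ingredients that are already available: the purity for the Witt functor $W$ (part of Theorem \ref{cousin_BW}) together with the local isotropy statement of Theorem \ref{Rat_Iso_Loc_Iso}. The key observation is that $W$-purity lifts the \emph{Witt class} of $(Q,\psi)$ from $\mathcal K$ to $\mathcal O$, while Theorem \ref{Rat_Iso_Loc_Iso} allows us to convert the resulting Witt-equivalence over $\mathcal K$ into a genuine isometry by iteratively splitting off hyperbolic summands over $\mathcal O$.

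More precisely, since $(Q,\psi)$ is unramified over $\mathcal O$, its Witt class $[Q,\psi]\in W(\mathcal K)$ lifts to a class in $W(\mathcal O_{\mathcal O,\wp})$ for every height-one prime $\wp$ of $\mathcal O$. Theorem \ref{cousin_BW} asserts precisely that purity holds for $W$ with respect to $\mathcal O$, so $[Q,\psi]$ lifts to a class $\alpha\in W(\mathcal O)$. Choose any quadratic space $(P_0,\varphi_0)$ over $\mathcal O$ representing $\alpha$; then $(P_0\otimes_{\mathcal O}\mathcal K,\varphi_0\otimes\mathcal K)$ and $(Q,\psi)$ have the same Witt class over $\mathcal K$. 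Since $1/2\in A$, Witt cancellation holds over the field $\mathcal K$, so after adjoining a suitable number of hyperbolic planes to one of the two sides we obtain an isometry
\[
(P_0\otimes\mathcal K,\varphi_0\otimes\mathcal K)\perp \mathbb H^{a}\;\cong\;(Q,\psi)\perp \mathbb H^{b}
\]
with $a,b\geq 0$ and $\min(a,b)=0$; note that the ranks automatically have the same parity.

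If $b>0$, I simply replace $(P_0,\varphi_0)$ by $(P_0,\varphi_0)\perp\mathbb H^{b}$ over $\mathcal O$, after which the generic fiber is isometric to $(Q,\psi)$ and we are done. The harder case, and the crux of the argument, is $a>0$: then $(P_0\otimes\mathcal K,\varphi_0\otimes\mathcal K)$ splits off a hyperbolic plane and is in particular isotropic over $\mathcal K$. Here I invoke Theorem \ref{Rat_Iso_Loc_Iso}, whose hypotheses are inherited from the corollary (each irreducible component of $X_v$ contains one of the $x_i$), to conclude that $(P_0,\varphi_0)$ is isotropic already over $\mathcal O$. Since $\mathcal O$ is a regular semi-local domain with $1/2\in\mathcal O$, any isotropic quadratic space over $\mathcal O$ splits off a hyperbolic plane, giving a decomposition $(P_0,\varphi_0)\cong (P_1,\varphi_1)\perp \mathbb H$ over $\mathcal O$.

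The Witt class of $(P_1,\varphi_1)$ still equals $\alpha$, its rank has dropped by $2$, and the generic fiber is still Witt-equivalent to $(Q,\psi)$. Iterating this procedure (at most $a$ times) produces a quadratic space $(P,\varphi)$ over $\mathcal O$ whose generic fiber is Witt-equivalent to $(Q,\psi)$ \emph{and} has the same rank as $(Q,\psi)$; by Witt cancellation over $\mathcal K$ the generic fiber is then isometric to $(Q,\psi)$. Part (2) is immediate since when $X_v$ is irreducible the extra hypothesis of part (1) is trivially satisfied. The only non-routine ingredient is the isotropy step, and the main obstacle is essentially bookkeeping: making sure that the local isotropy theorem applies at each stage (which it does, since the hypotheses on $\mathcal O$ are preserved when one replaces $(P_0,\varphi_0)$ by $(P_1,\varphi_1)$).
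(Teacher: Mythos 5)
Your overall strategy is exactly the paper's: lift the Witt class of $(Q,\psi)$ to $W(\mathcal O)$ using the purity part of Theorem \ref{cousin_BW}, pick a representative $(P_0,\varphi_0)$ over $\mathcal O$, and then trade the Witt-equivalence over $\mathcal K$ for an isometry by repeatedly splitting off hyperbolic planes over $\mathcal O$ via Theorem \ref{Rat_Iso_Loc_Iso} and cancelling over $\mathcal K$ with Witt's theorem. However, as written your case analysis is transposed, and both branches fail literally. With your isometry $(P_0\otimes\mathcal K)\perp\mathbb H^{a}\cong(Q,\psi)\perp\mathbb H^{b}$, $\min(a,b)=0$: if $b>0$ (so $P_0\otimes\mathcal K\cong(Q,\psi)\perp\mathbb H^{b}$), replacing $(P_0,\varphi_0)$ by $(P_0,\varphi_0)\perp\mathbb H^{b}$ makes the generic fiber $(Q,\psi)\perp\mathbb H^{2b}$, not $(Q,\psi)$; and if $a>0$ (so $(P_0\otimes\mathcal K)\perp\mathbb H^{a}\cong(Q,\psi)$), the space $P_0\otimes\mathcal K$ need not be isotropic at all, and splitting hyperbolic planes off $P_0$ would move its rank further away from that of $Q$, so the iteration could never terminate at equal ranks.

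The repair is purely a swap: when $a>0$ the situation is the easy one (just replace $(P_0,\varphi_0)$ by $(P_0,\varphi_0)\perp\mathbb H^{a}$ over $\mathcal O$ and you are done), and when $b>0$ the generic fiber of $P_0$ is $(Q,\psi)\perp\mathbb H^{b}$, hence isotropic, so Theorem \ref{Rat_Iso_Loc_Iso} (whose hypotheses are indeed those of the corollary and are preserved under the replacement) gives $(P_0,\varphi_0)\cong(P_1,\varphi_1)\perp\mathbb H_{\mathcal O}$, and Witt cancellation over $\mathcal K$ gives $P_1\otimes\mathcal K\cong(Q,\psi)\perp\mathbb H^{b-1}$; after at most $b$ iterations the generic fiber is isometric to $(Q,\psi)$. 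With this correction your argument coincides with the paper's proof, which arranges the representative so that its generic fiber is $(Q,\psi)\perp\mathbb H^{n}$ from the start and then performs precisely this descending induction on $n$.
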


\begin{cor}\label{Pres_of_a_unit}
(1) Suppose each connected component of $X_v$ contains at least one of the point $x_i$'s.
Let $(P,\varphi)$ be a quadratic space over $\mathcal O$ and let $u \in \mathcal O^{\times}$ be
a unit. Suppose the equation $\varphi = u$ has a solution over $\mathcal K$
then it has a solution over $\mathcal O$, that is there exists a
vector $v \in V$ with $\varphi (v)= u$.

(2) If $X_v$ is irreducible, then the hypotheses of the item (1) hold automatically. So,
if the equation $\varphi = u$ has a solution over $\mathcal K$
then it has a solution over $\mathcal O$.
\end{cor}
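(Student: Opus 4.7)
The plan is to reduce Corollary \ref{Pres_of_a_unit} to Theorem \ref{Rat_Iso_Loc_Iso} by the classical orthogonal-sum trick, and then to conclude by Witt cancellation over the regular semi-local ring $\mathcal O$.

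First I would reconcile the two hypotheses. Since $X$ is $A$-smooth, the closed fibre $X_v$ is $k(v)$-smooth, hence regular, and a connected regular scheme is irreducible; so connected components of $X_v$ coincide with its irreducible components, and the hypothesis of (1) is precisely the hypothesis appearing in Theorem \ref{Rat_Iso_Loc_Iso}. For (2), every closed point of $X$ automatically maps to the closed point $v\in V$ (because $K$ has characteristic zero while residue fields at closed points have characteristic $p>0$), so each $x_i$ lies in $X_v$; if $X_v$ is irreducible, the component hypothesis is automatically satisfied.

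Next I would form the quadratic space $(\widetilde P,\widetilde\varphi):=(P,\varphi)\perp\langle -u\rangle$ over $\mathcal O$. The assumed solution $v_0\in P\otimes_{\mathcal O}\mathcal K$ of the equation $\varphi=u$ provides an isotropic unimodular vector $(v_0,1)$ in $\widetilde P\otimes_{\mathcal O}\mathcal K$, so $(\widetilde P,\widetilde\varphi)$ is isotropic over $\mathcal K$. Theorem \ref{Rat_Iso_Loc_Iso} then yields an isotropic unimodular vector of $\widetilde P$ over $\mathcal O$, and since $1/2\in\mathcal O$ and $\mathcal O$ is regular semi-local, this vector spans a hyperbolic plane $H$, giving an orthogonal decomposition $(P,\varphi)\perp\langle -u\rangle\cong H\perp(P',\varphi')$ for some quadratic space $(P',\varphi')$ over $\mathcal O$.

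To finish I would add $\langle u\rangle$ to both sides. The vector $(1,1)$ exhibits $\langle -u\rangle\perp\langle u\rangle\cong H$, so $(P,\varphi)\perp H\cong H\perp\langle u\rangle\perp(P',\varphi')$. Witt cancellation for quadratic spaces over the semi-local ring $\mathcal O$ (standard whenever $2$ is invertible) then gives $(P,\varphi)\cong\langle u\rangle\perp(P',\varphi')$, and the generator of the $\langle u\rangle$ summand is the sought vector $v\in P$ with $\varphi(v)=u$. The only substantive ingredient is the invocation of Theorem \ref{Rat_Iso_Loc_Iso}; the remaining steps are formal algebra, and the mismatch between "connected" and "irreducible" components is neutralised by the smoothness of $X_v/k(v)$.
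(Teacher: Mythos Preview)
Your reduction to Theorem \ref{Rat_Iso_Loc_Iso} via the orthogonal sum $(P,\varphi)\perp\langle -u\rangle$ is exactly the paper's approach; where the paper then cites a claim from \cite{C-T} to pass from isotropy of this sum over $\mathcal O$ to a representation $\varphi(v)=u$, you unpack the same fact by splitting off a hyperbolic plane and applying Witt cancellation over the semi-local ring $\mathcal O$, which is a legitimate and essentially equivalent route.

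One small slip in your handling of part~(2): you assert that every closed point of $X$ lies over $v$ because ``residue fields at closed points have characteristic $p>0$''. This is false for affine $X$: already in $\mathbb A^1_A$ the maximal ideal $(\pi t-1)\subset A[t]$ defines a closed point with residue field $K$, of characteristic zero, lying over the generic point of $V$. The paper does not argue this point either and simply asserts (2); the intended reading is that the chosen $x_i$ are assumed to lie in $X_v$, in which case irreducibility of $X_v$ trivially gives the component hypothesis. Your observation that connected and irreducible components of $X_v$ agree by smoothness is, on the other hand, correct and useful for reconciling the wording of the corollary with that of Theorem \ref{Rat_Iso_Loc_Iso}.
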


\begin{notation}\label{inj_to_gen_point}
Let $X_{v,j}\subseteq X_v$ ($j$ runs from $1$ to $l$) be the irreducible components of the scheme $X_v$. Let $\eta_j$ be its generic point.
Write $\mathcal O_{X,X_v}$ for $\mathcal O_{X,\eta_1,...,\eta_l}$. If $X'\subset X$ is an open containing all the points $\eta_j$'s,
then $\mathcal O_{X,\eta_1,...,\eta_l}=\mathcal O_{X',\eta_1,...,\eta_l}$. So, using short notation
$\mathcal O_{X,X_v}=\mathcal O_{X',X'_v}$. Put $\mathcal S=Spec \mathcal \ O_{X,X_v}=Spec \ \mathcal O_{X',X'_v}=\mathcal S'$.
\end{notation}

\begin{proof}[Reducing Theorem \ref{Rat_Iso_Loc_Iso} to Theorem \ref{cousin_BW} and Corollary \ref{Homotopy_w_Traces_2}]
First consider the case when the residue field $k(v)$ of $A$ is infinite.
We may assume that there is an affine open neighborhood $X'$ of all the points $x_i$'s
and a quadratic space $(P',\varphi')$ over $X'$ which extends to $X'$ the space $(P,\varphi)$.
Moreover, this $X'$ can be chosen such that it
contains all generic points of the scheme $X_v$, but it
does not contain any irreducible component of the scheme $X_v$.
By the hypothesis of the theorem the quadratic space $(P',\varphi')$ is isotropic over $\mathcal K$.
Thus, it is isotropic over the semi-local Dedekind ring $\mathcal O_{X',X'_v}$,
where the ring $\mathcal O_{X',X'_v}$ is defined in
Notation \ref{inj_to_gen_point}.
Hence there is an $f\in \Gamma(X',\mathcal O_{X'})$ such that
the space $(P',\varphi')|_{X'_f}$ is isotropic and
the closed subset $Y:=\{f=0\}$ of $X'$ does not contain any irreducible component of the scheme $X'_v$.

Now consider the diagram from Corollary \ref{Homotopy_w_Traces_2}. By the item (5) of the Corollary
the quadratic spaces $p^*_X(P',\varphi')$ and $p^*_U(P,\varphi)$ are isomorphic.
Thus, by Corollary \ref{Homotopy_w_Traces_2} the pull-backs of the quadratic space $(P,\varphi)$ as to
$\mathcal D_1$, so to $\mathcal D'_0$ are both isotropic.

The schemes $\mathcal X$ and $\Aff^1_U$ are both regular and irreducible.
The morphism $\tau$ is finite surjective. By \cite[Corollary 18.17]{E}
the $\mathcal O[t]$-module
$\Gamma(\mathcal X,\mathcal O_{\mathcal X})$
is finitely generated projective (of constant rank).
By the statements (1) and (4) of the Corollary \ref{Homotopy_w_Traces_2} one has an equality
$[\mathcal D_1:U]=1+[\mathcal D'_0:U]$.
Hence one of the degree's
$[\mathcal D_1:U]$ or $[\mathcal D'_0:U]$ is an {\it odd number}.
By the statement (3) of the Corollary \ref{Homotopy_w_Traces_2}
as $\mathcal D_1$, so $\mathcal D'_0$ are both finite \'{e}tale over $U$.
Applying now \cite[Theorem 1.1]{PP1}
we conclude that the quadratic space $(P,q)$ is isotropic.

Now consider the case when the residue field $k(v)$ of $A$ is finite.
Let $l$ be an odd prime number different of $p$.
Consider a tower of finite \'{e}tale extensions
$$A\subset A_1\subset A_2\subset A_3 ... \subset A_{\infty}$$
such that each $A_m$ is a d.v.r., each extension  $A_m\subset A_{m+1}$
is finite \'{e}tale of degree $l$,
for each point $x_i$'s the $k(x_i)$-algebra $k(x_i)\otimes_A A_m$ is a field.
Replace $A$ with $A_{\infty}$, $X$ with $X_{\infty}=X\otimes_A A_{\infty}$, points
$x_1,x_2,\dots,x_n\in X$ with points $x_{1,\infty},x_{2,\infty},\dots, x_{n,\infty}\in X_{\infty}$,
$\mathcal O$ with $\mathcal O_{\infty}=\mathcal O\otimes_A A_{\infty}$
and $U$ with $U_{\infty}=U\otimes_A A_{\infty}$.
Write $\mathcal K_{\infty}$ for $\mathcal K\otimes_A A_{\infty}$.
Write $(P,\varphi)_{\infty}$ for the quadratic space
$(P,\varphi)\otimes_A A_{\infty}$
over $\mathcal O_{\infty}$.
Since $X$ is $A$-smooth it follows that $X_{\infty}$ is $A_{\infty}$-smooth. Thus,
$X_{\infty}$ and $U_{\infty}$ are regular schemes.
Since $U$ is irreducible, $U_{\infty}$ is regular and $x_{1,\infty},x_{2,\infty},\dots, x_{n,\infty}$
are all its closed points it follows that $U_{\infty}$ is irreducible. Thus,
$\mathcal K_{\infty}$ is a field (it is the fraction field of
$\mathcal O_{\infty}$).

By the first part of the proof the the quadratic space
$(P,\varphi)\otimes_A A_{\infty}$
is isotropic over
$\mathcal O_{\infty}$.
Thus, there exists an integer $m>0$ such that
the quadratic space
$(P,\varphi)\otimes_A A_m$
is isotropic over
$\mathcal O_{m}=\mathcal O\otimes_A A_m$.
The extension $\mathcal O\subset \mathcal O_{m}$
is a finite \'{e}tale extension of odd degree.
Applying now \cite[Theorem 5.1]{Sc}
we conclude that the quadratic space $(P,q)$ is isotropic.

\end{proof}

\begin{proof}[Reducing Theorem \ref{Purity_Q} to Theorems \ref{cousin_BW} and \ref{Rat_Iso_Loc_Iso}]
By Theorem \ref{cousin_BW} there exist a
quadratic space $(V,\varphi)$ over $\mathcal O$ and an integer $n \geq 0$
such that $(P,\varphi) \otimes_{\mathcal O} \mathcal K \cong (Q,\psi) \perp \mathbb H^{n}_{\mathcal K}$,
where $\mathbb H_{\mathcal K}$ is a hyperbolic plane. If $n > 0$ then
the space $(V,\varphi) \otimes_{\mathcal O} \mathcal K$ is isotropic. By the Theorem
\ref{Rat_Iso_Loc_Iso}
the space $(V,\varphi)$ is isotropic too. Thus
$(V,\varphi) \cong (V^{\prime}, \varphi^{\prime}) \perp \mathbb H_{\mathcal O}$
for a quadratic space
$(V^{\prime}, \varphi^{\prime})$ over $\mathcal O$.
Now Witt's Cancellation theorem over a field
\cite[Chap.I, Thm.4.2]{La}
shows that
$(V^{\prime}, \varphi^{\prime}) \otimes_{\mathcal O} \mathcal K \cong (W,\psi) \perp \mathbb H^{n-1}_{\mathcal K}$.
Repeating
this procedure several times we may assume that $n=0$, which means
that $(V,\varphi) \otimes_\mathcal O \mathcal K \cong (W,\psi)$.
\end{proof}

\begin{proof}[Reducing Corollary \ref{Pres_of_a_unit} to Theorem \ref{Rat_Iso_Loc_Iso}]
Let $(\mathcal O,-u)$ be a the rank one quadratic space over $\mathcal O$
corresponding the unit $-u$. The space
$(V,\varphi)_\mathcal K \perp (\mathcal K,-u)$ is isotropic. Thus, the space
$(V,\varphi) \perp (\mathcal O,-u)$
is isotropic by Theorem \ref{Rat_Iso_Loc_Iso}. By the Claim below
there exists
a vector $v \in V$ with $\varphi (v)= u$.
Clearly $v$ is unimodular.

{\it Claim.}
Let $(W, \psi)=(V,\phi) \perp (R,-u)$. The space $(W, \psi)$
is isotropic if and only if there exists
a vector $v \in V$ with $\varphi (v)= u$.

This Claim is proved in
\cite[the proof of Proposition 1.2]{C-T}.
\end{proof}

\section{Grothendieck--Serre conjecture for $SL_{1,D}$ }
Let $A$, $p>0$, $d\geq 1$, $X$, $x_1,x_2,\dots,x_n\in X$, $\mathcal O$ and $U$  be as in Section \ref{agreements}.
Write $\mathcal K$ for the fraction field of the ring $\mathcal O$.

\begin{thm}[Grothendieck--Serre conjecture for $SL_{1,D}$]
\label{Gr_Serre_for Azumaya}
Let $D$ be an Azumaya $\mathcal O$-algebra
and $Nrd_D: D^{\times}\to \mathcal O^{\times}$ be the reduced norm homomorphism.
Let $a \in \mathcal O^{\times}$. If $a$ is a reduced norm for the central simple
$\mathcal K$-algebra $D\otimes_{\mathcal O} \mathcal K$,
then $a$ is a reduced norm for the algebra $D$.
\end{thm}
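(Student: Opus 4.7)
My plan is to follow the scheme of the proof of Theorem \ref{Rat_Iso_Loc_Iso}, using Corollary \ref{Homotopy_w_Traces_2} together with Suslin's classical norm principle for reduced norms of central simple algebras in place of the odd-degree/isotropy arguments used there for quadratic spaces. Concretely, I would first spread the data: choose an affine open $X'\subset X$ containing $x_1,\dots,x_n$ and all generic points of $X_v$ but no component of $X_v$, an Azumaya $\mathcal O_{X'}$-algebra $D'$ restricting to $D$, and an extension of $a$ to a unit on $X'$. Since $a$ is a reduced norm of $D'_{\mathcal K}$, and hence of $D'$ already over the semilocal Dedekind ring $\mathcal O_{X',X'_v}$, one finds $f\in\Gamma(X',\mathcal O_{X'})$ not vanishing at any generic point of $X'_v$ such that $a$ is a reduced norm of $D'$ on the principal open $X'_f$. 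Put $Y=\{f=0\}$: a codimension one closed subset of $X'$ missing every component of $X'_v$.

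Assuming first that $k(v)$ is infinite, I would apply Corollary \ref{Homotopy_w_Traces_2} to $X'$, $\{x_i\}$, $Y$ to obtain
$$\mathbb A^1_{U'}\xleftarrow{\tau}\mathcal X\xrightarrow{p_X}X',$$
with a section $\Delta$ of $p=\mathrm{pr}_{U'}\circ\tau$ satisfying $p_X\circ\Delta=can$ and $\tau\circ\Delta=i_0$, and divisors $\mathcal D_1=\tau^{-1}(\{1\}\times U')$, $\mathcal D_0=\Delta(U')\sqcup\mathcal D'_0=\tau^{-1}(\{0\}\times U')$, both finite \'etale over $U'$, with $\mathcal Y:=p_X^{-1}(Y)$ disjoint from $\mathcal D_1$ and from $\mathcal D'_0$. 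At this stage I also need an Azumaya-algebra analogue of item (5) of the Corollary, namely an isomorphism $p_X^*D'\simeq p_{U'}^*D$ of Azumaya algebras on $\mathcal X$, to be established by the same homotopy construction used for quadratic spaces, exploiting that both sides restrict to $D$ along the section $\Delta(U')$; by Morita invariance an equality of Brauer classes would already suffice, since the subgroup of reduced norms in $\Gamma(U',\mathcal O)^\times$ depends only on the class in $Br(U')$.

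Granting this, the pull-back $\tilde a=p^*a$ is a reduced norm of $p_{U'}^*D$ on $\mathcal X\setminus\mathcal Y$, and so on each of $\mathcal D_1$ and $\mathcal D'_0$. Since the Azumaya algebra on each cover is the pull-back of $D$ along a finite \'etale structure morphism to $U'$, Suslin's norm principle for reduced norms gives
$$a^{[\mathcal D_1:U']}=N_{\mathcal D_1/U'}(\tilde a|_{\mathcal D_1})\in Nrd_D(D^\times),\quad a^{[\mathcal D'_0:U']}=N_{\mathcal D'_0/U'}(\tilde a|_{\mathcal D'_0})\in Nrd_D(D^\times),$$
and finite flatness of $\tau$ yields $[\mathcal D_1:U']=[\mathcal D_0:U']=1+[\mathcal D'_0:U']$, so the ratio of the two displayed elements equals $a$; localising from $U'$ to $\mathcal O$ completes the infinite residue field case. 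The finite residue field case reduces to it exactly as in the proof of Theorem \ref{Rat_Iso_Loc_Iso}: fix a prime $l$ coprime to $p$ and to $\deg D$, climb a tower $A\subset A_1\subset\cdots\subset A_\infty$ of finite \'etale d.v.r.\ extensions of degree $l$, solve the problem over $A_\infty$, and descend by the reduced-norm analogue of Scharlau's theorem, since $a^{\deg D}$ is automatically a reduced norm of $D$ and a B\'ezout combination recovers $a$ from the two coprime exponents $l^m$ and $\deg D$. The main obstacle I expect is the Azumaya analogue of Corollary \ref{Homotopy_w_Traces_2}(5); once that compatibility is in hand, every remaining step is a direct application of well-known algebraic inputs.
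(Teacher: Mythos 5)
Your route is genuinely different from the paper's: the paper deduces the theorem from Proposition \ref{Purity_for Azumaya} (exactness of $K_1(D)\to K_1(D\otimes_{\mathcal O}\mathcal K)\to\oplus_{y\in U^{(1)}}K_0(D\otimes_{\mathcal O}k(y))$, proved by killing the support extension map for $K'_*(-;D)$ via Theorem \ref{geom_pres_mixed_char} and the Panin--Suslin method) followed by a diagram chase with reduced norm maps; you instead try to transplant the quadratic-space argument built on Corollary \ref{Homotopy_w_Traces_2} and a norm principle. As written, your version has a genuine gap at its central step, beyond the missing Azumaya analogue of item (5) that you flag yourself. What the spreading-out step actually gives is that $a'$ is a reduced norm of $D'$ on $X'_f$, hence that $p_X^*a'$ is a reduced norm of $p_X^*D'$ on $\mathcal X\setminus\mathcal Y$. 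Your key display, however, norms down $\tilde a=p^*a$, the pullback along $p=\mathrm{pr}_{U'}\circ\tau$. These are two different units on $\mathcal X$: they agree on $\Delta(U')$ but nowhere else in general, and an item-(5)-type statement can only identify the two pullbacks of the \emph{algebra}, not of the unit. If you keep $\tilde a=p^*a$, the claim that $\tilde a|_{\mathcal D_1}$ and $\tilde a|_{\mathcal D'_0}$ are reduced norms is unjustified --- it is a statement of exactly the same nature as the theorem being proved, now over the bigger rings $\mathcal O(\mathcal D_1)$, $\mathcal O(\mathcal D'_0)$, so the argument is circular; if instead you take $\tilde a=p_X^*a'$, the memberships become plausible (granting the Brauer identification and a norm principle), but $N_{\mathcal D_1/U'}(\tilde a|_{\mathcal D_1})$ is then not $a^{[\mathcal D_1:U']}$, so the degree bookkeeping and the final ``ratio equals $a$'' collapse.

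A repair exists, but it runs differently from your degree count: put $c=p_X^*a'$; since $\tau$ is finite flat and $\Gamma(\mathbb A^1_{U'},\mathcal O)^\times=\Gamma(U',\mathcal O)^\times$, the norm $N_{\mathcal X/\mathbb A^1_{U'}}(c)$ is constant in the $\mathbb A^1$-direction, whence $N_{\mathcal D_0/U'}(c|_{\mathcal D_0})=N_{\mathcal D_1/U'}(c|_{\mathcal D_1})$, and splitting $\mathcal D_0=\Delta(U')\sqcup\mathcal D'_0$ with $\Delta^*c=a$ gives $a=N_{\mathcal D_1/U'}(c|_{\mathcal D_1})\cdot N_{\mathcal D'_0/U'}(c|_{\mathcal D'_0})^{-1}$; one then needs each factor to be a reduced norm of $D$, which still requires the (unproved in this paper) Azumaya analogue of Corollary \ref{Homotopy_w_Traces_2}(5), or at least equality of Brauer classes over $\mathcal D_1\sqcup\mathcal D'_0$, together with a norm principle for the finite \'{e}tale maps $\mathcal D_1,\mathcal D'_0\to U'$ (note $U'$ is not semilocal, so this needs care, e.g.\ via $K_1$-transfers and then restriction to $U$). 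The finite-residue-field reduction via the tower and a B\'ezout argument is fine in spirit, but the core step above is where the proposal, as it stands, does not prove the theorem; the paper's proof avoids the issue entirely by never tracking the unit $a$ geometrically and instead proving purity for $K$-theory with coefficients in $D$.
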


The following result is the "constant" case of Theorem \ref{Gr_Serre_for Azumaya}. Its proof
is easier than the one of Theorem \ref{Gr_Serre_for Azumaya}. This is why we decided
to include it in the paper.
\begin{thm}[Grothendieck--Serre conjecture for $SL_{1,\mathcal A}$]
\label{Gr_Serre_for Azumaya_const}
Let $\mathcal A$ be an Azumaya $A$-algebra.
Let $a \in \mathcal O^{\times}$. If $a$ is a reduced norm for the central simple
$\mathcal K$-algebra $\mathcal A\otimes_{A} \mathcal K$,
then $a$ is a reduced norm for the Azumaya $\mathcal O$-algebra $\mathcal A\otimes_A \mathcal O$.
\end{thm}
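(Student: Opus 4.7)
\emph{Plan.} The argument would follow the template of the quadratic-space proof of Section 6, replacing Scharlau's odd-degree theorem and \cite[Theorem 1.1]{PP1} by the norm-compatibility of the reduced norm on $K_1$ of Azumaya algebras. Because $\mathcal{A}$ is constant over $A$, for any finite \'{e}tale morphism $\pi\colon S\to R$ of $A$-algebras the transfer $\pi_*\colon K_1(\mathcal{A}\otimes_A S)\to K_1(\mathcal{A}\otimes_A R)$ is well-defined (via restriction of scalars, using that $(\mathcal{A}\otimes_A S)^n$ is finite projective of rank $n[S:R]$ over $\mathcal{A}\otimes_A R$) and satisfies $Nrd\circ\pi_*=N_{S/R}\circ Nrd$, with $N_{S/R}(r)=r^{[S:R]}$ for $r\in R^\times$. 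This compatibility is the only new input beyond the quadratic-space argument.

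Assuming first that $k(v)$ is infinite, I would spread $\xi$ to an element $\tilde\xi\in (\mathcal{A}\otimes_A \Gamma(X'_f,\mathcal{O}))^\times$ on some Zariski open $X'_f\subseteq X'$ with $Nrd(\tilde\xi)=a|_{X'_f}$ and with $Y:=\{f=0\}$ not containing any irreducible component of $X'_v$; for this last condition one first invokes the classical DVR case of Grothendieck--Serre for $SL_{1,\mathcal{A}}$ to make $\xi$ live in $(\mathcal{A}\otimes_A \mathcal{O}_{X',X'_v})^\times$, and then clears denominators in a neighborhood of the $x_i$'s. Now apply Corollary \ref{Homotopy_w_Traces_2} to $X'$, the points $x_i$, and $Y$, producing $\Aff^1_{U'}\xleftarrow{\tau}\mathcal{X}\xrightarrow{p_X} X'$ together with $\mathcal{Y}:=p_X^{-1}(Y)$. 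The restrictions of $p_X^*\tilde\xi$ to $\mathcal{D}_1$ and $\mathcal{D}_0':=\mathcal{D}_0\setminus\Delta(U')$---both disjoint from $\mathcal{Y}$ and finite \'{e}tale over $U'$ by items (2)--(4) of the corollary---have reduced norms $a|_{\mathcal{D}_1}$ and $a|_{\mathcal{D}_0'}$. Pushing forward to $K_1(\mathcal{A}\otimes_A \Gamma(U',\mathcal{O}))$ yields classes with reduced norms $a^d$ and $a^{d-1}$ respectively, where $d:=[\mathcal{D}_1:U']$ is the common degree of $\tau$ and $[\mathcal{D}_0':U']=d-1$ by $\mathcal{D}_0=\Delta(U')\sqcup \mathcal{D}_0'$. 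Their quotient has reduced norm $a$. Localizing from $U'$ to $\Spec \mathcal{O}$ and using that over the semi-local regular ring $\mathcal{O}$ one has $K_1(B)=B^\times/[B^\times,B^\times]$ (so that the reduced-norm image from $K_1$ coincides with $Nrd(B^\times)$) produces the desired $\eta\in (\mathcal{A}\otimes_A\mathcal{O})^\times$ with $Nrd(\eta)=a$.

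For the case of finite $k(v)$ I would reduce to the infinite one exactly as in the proof of Theorem \ref{Rat_Iso_Loc_Iso}: choose a tower of finite \'{e}tale degree-$l$ extensions $A\subset A_1\subset\cdots\subset A_\infty$ with $l$ an odd prime coprime to $p\cdot \deg\mathcal{A}$, so that $k(v_\infty)$ is infinite, and base-change. Once $a$ is known to be a reduced norm over $\mathcal{O}_\infty$, hence over some $\mathcal{O}_m$ with $[\mathcal{O}_m:\mathcal{O}]=l^m$, the transfer gives $a^{l^m}\in Nrd((\mathcal{A}\otimes_A\mathcal{O})^\times)$; combined with the trivial fact $a^{\deg\mathcal{A}}=Nrd(a\cdot 1_\mathcal{A})$ and $\gcd(l^m,\deg\mathcal{A})=1$, Bezout finishes the argument.

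The main obstacle will be the spread-out step. In the quadratic-space case the existence of an isotropic vector over $\mathcal{K}$ automatically yields one over the semi-local Dedekind ring $\mathcal{O}_{X',X'_v}$ by classical valuation arguments; here $\xi$ and $\xi^{-1}$ may have poles along the generic points of $X_v$, so one must first invoke the DVR version of Grothendieck--Serre for $SL_{1,\mathcal{A}}$ to extend $\xi$ across $\eta_1,\ldots,\eta_l$ before clearing denominators. Once this input is recorded, the remaining verifications---the transfer on $K_1$ of constant Azumaya algebras and its compatibility with $Nrd$, and the $K_1\to B^\times$ passage over the semi-local $\mathcal{O}$---are routine.
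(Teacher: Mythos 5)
The central computation in your infinite-residue-field step is not justified and, as written, is false: you claim that pushing the restrictions of $p_X^*\tilde\xi$ to $\mathcal D_1$ and $\mathcal D'_0$ forward to $K_1(\mathcal A\otimes_A\Gamma(U',\mathcal O_{U'}))$ "yields classes with reduced norms $a^d$ and $a^{d-1}$". The reduced norm of $p_X^*\tilde\xi$ is $p_X^*\tilde a$, and its restriction to $\mathcal D_1$ (resp.\ $\mathcal D'_0$) is the pullback of $\tilde a$ along $p_X|_{\mathcal D_1}$ (resp.\ $p_X|_{\mathcal D'_0}$), \emph{not} along the finite \'etale structure maps $\mathcal D_1\to U'$, $\mathcal D'_0\to U'$, which factor through $p=pr_{U'}\circ\tau$; the two morphisms $p_X$ and $p$ agree only on $\Delta(U')$. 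The projection formula $N_{S/R}(\pi^*r)=r^{[S:R]}$ that you invoke applies only to elements pulled back from the base, so the reduced norms of your transferred classes are $N_{\mathcal D_1/U'}\bigl((p_X^*\tilde a)|_{\mathcal D_1}\bigr)$ and $N_{\mathcal D'_0/U'}\bigl((p_X^*\tilde a)|_{\mathcal D'_0}\bigr)$, which are not powers of $a$ in general; nothing in Corollary \ref{Homotopy_w_Traces_2} identifies $(p_X^*\tilde a)|_{\mathcal D_i}$ with the pullback of $a|_{U'}$ (its item (5) is a statement about quadratic spaces and gives at best an isomorphism of spaces, not the equality of units you need).

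The gap is repairable, but by a different mechanism than the one you state: after shrinking $X'$ so that $a$ extends to a unit $\tilde a\in\Gamma(X',\mathcal O_{X'})^\times$, the norm $N_{\mathcal X/\Aff^1_{U'}}(p_X^*\tilde a)$ along the finite flat morphism $\tau$ lies in $\Gamma(U',\mathcal O_{U'})[t]^\times=\Gamma(U',\mathcal O_{U'})^\times$, hence its specializations at $t=0$ and $t=1$ coincide; together with $\mathcal D_0=\Delta(U')\sqcup\mathcal D'_0$ and $p_X\circ\Delta=can$ this gives $N_{\mathcal D_1/U'}\bigl((p_X^*\tilde a)|_{\mathcal D_1}\bigr)=a\cdot N_{\mathcal D'_0/U'}\bigl((p_X^*\tilde a)|_{\mathcal D'_0}\bigr)$, so the quotient of your two transferred $K_1$-classes does have reduced norm $a$ --- the same homotopy-invariance-of-traces mechanism that drives the proof of Theorem \ref{inj_main}. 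With that correction (and your other inputs: the Dedekind/DVR case to spread $\xi$, $K_1$ of the semi-local ring being generated by units, and the coprime-degree transfer plus Bezout for finite $k(v)$, all of which are fine), your route works, but note it is genuinely different from the paper's: the paper deduces the theorem from Proposition \ref{Purity_for_K_Adzum} --- exactness of $K_1(\mathcal A\otimes_A\mathcal O)\to K_1(\mathcal A\otimes_A\mathcal K)\to\oplus_{y\in U^{(1)}}K_0(\mathcal A\otimes_A k(y))$, a special case of Theorem \ref{General_Purity}(b) --- by a diagram chase against $\mathcal O^\times\to\mathcal K^\times\to\oplus_y K_0(k(y))$ using injectivity of the right-hand reduced-norm map; that argument requires no transfers on $K_1$, no case split on the residue field, and no DVR input.
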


\begin{prop}\label{Purity_for_K_Adzum}
Let $\mathcal A$ be an Azumaya algebra over $A$.
Let $K_*$ be the Quillen $K$-functor. Then for each integer $n\geq 0$ one has an exact sequence
\begin{equation}
K_n(\mathcal A\otimes_A \mathcal O)\to K_n(\mathcal A\otimes_A \mathcal K)\xrightarrow{\partial} \oplus_{y\in U^{(1)}}K_{n-1}(\mathcal A\otimes_A k(y)).
\end{equation}
Particularly, this sequence is exact for $n=1$.
\end{prop}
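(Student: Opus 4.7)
The plan is to realize the desired sequence as the exactness statement of Theorem \ref{General_Purity}(b) applied to a $\mathbb{Z}$-graded cohomology theory built from the Quillen $K$-theory of $\mathcal A$-modules. For each $Y\in Sm'/V$ let $\mathcal A_Y:=\mathcal A\otimes_A\mathcal O_Y$, a sheaf of Azumaya $\mathcal O_Y$-algebras on $Y$, and for each pair $(Y,W)\in Sm'Op/V$ set
$$E^n_{\mathcal A}(Y,W)\ :=\ K_{-n}^{Y\setminus W}(\mathcal A_Y),$$
the Thomason--Trobaugh $K$-theory of perfect $\mathcal A_Y$-complexes acyclic on $W$, together with the standard localisation boundary $\partial\colon K_i(\mathcal A_W)\to K_{i-1}^{Y\setminus W}(\mathcal A_Y)$.

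First I would check that $(E^\bullet_{\mathcal A},\partial)$ is a continuous $\mathbb Z$-graded cohomology theory on $Sm'Op/V$ in the sense of \cite[Def.~2.1]{PSm}. Localization and \'etale excision come from Thomason--Trobaugh applied to the Azumaya-twisted derived category; $\mathbb A^1$-homotopy invariance $K_n(\mathcal A_Y)\cong K_n(\mathcal A_Y[t])$ holds because $\mathcal A_Y$ is locally free of finite rank over $\mathcal O_Y$, so $\mathcal A_Y$ is noetherian regular of finite global dimension whenever $Y$ is regular, and Quillen's theorem for regular rings applies. Continuity of each presheaf $E^n_{\mathcal A}$ on $Sm'/V$ follows because tensor product with the fixed projective $A$-module $\mathcal A$ commutes with filtered colimits of rings, and Quillen $K$-theory itself commutes with filtered colimits of rings.

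Next I would apply Theorem \ref{General_Purity}(b) to $(E^\bullet_{\mathcal A},\partial)$ with $m=-n$; this gives exactness at the middle term of
$$0\to E^{-n}_{\mathcal A}(U)\xrightarrow{\eta^*}E^{-n}_{\mathcal A}(\eta)\xrightarrow{\partial}\oplus_{y\in U^{(1)}} E^{-n+1}_{\mathcal A,y}(U).$$
To finish, I would identify $E^{-n+1}_{\mathcal A,y}(U)=K_{n-1}^{y}(\mathcal A\otimes_A\mathcal O_{X,y})$ with $K_{n-1}(\mathcal A\otimes_A k(y))$ by Quillen d\'evissage: a coherent $\mathcal A_{X,y}$-module supported at the closed point $y$ is filtered by powers of the maximal ideal $\mathfrak m_y$, and each subquotient is an $\mathcal A\otimes_A k(y)$-module, which yields the asserted isomorphism on $K$-theory. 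Combining, we recover the sequence claimed by the proposition.

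The main obstacle is the careful verification of the cohomology-theory axioms of \cite{PSm} for Azumaya-twisted $K$-theory: in particular the localization, excision and $\mathbb A^1$-invariance axioms over essentially smooth bases $Y\in Sm'/V$ where $\mathcal A_Y$ is merely a twisted form of a matrix algebra, and the check that the boundary produced by Thomason--Trobaugh agrees (up to sign) with the boundary used in the construction of the Cousin complex \eqref{thm:Cousin_Exact}. Once these structural properties are secured, the conclusion of the proposition is formal from Theorem \ref{General_Purity}(b) and the d\'evissage identification of support $K$-groups.
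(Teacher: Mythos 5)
Your proposal is correct and follows essentially the same route as the paper: the paper also realizes Azumaya-twisted Thomason--Trobaugh $K$-theory, $E^n(Y,Y-Z)=K_{-n}(Y \text{ on } Z;\mathcal A)$ with the localization boundary of \cite[Theorem 5.1]{TT}, as a cohomology theory on $Sm'Op/V$ in the sense of \cite{PSm} (verifying the axioms as in \cite[Example 2.1.8]{PSm}) and then deduces the exact sequence directly from item (b) of Theorem \ref{General_Purity}, using \cite[Theorems 3.9, 3.10]{TT} to compare with Quillen's groups. Your added d\'evissage identification of the supported terms with $K_{n-1}(\mathcal A\otimes_A k(y))$ is the same standard step the paper leaves implicit.
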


\begin{prop}\label{Purity_for Azumaya}
Let $D$ be an Azumaya $\mathcal O$-algebra.
Let $K_*$ be the Quillen $K$-functor.
Then for each integer $n\geq 0$
the sequence is exact
\begin{equation}
K_n(D)\to K_n(D\otimes_{\mathcal O} \mathcal K)\xrightarrow{\partial} \oplus_{y\in U^{(1)}}K_{n-1}(D\otimes_{\mathcal O} k(y))
\end{equation}
Particularly, it is exact for $n=1$.
\end{prop}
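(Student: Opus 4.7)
The plan is to promote twisted $K$-theory with coefficients in $D$ to a $\mathbb Z$-graded continuous cohomology theory in the Panin--Smirnov sense, and then to invoke Theorem \ref{General_Purity}(b). The constant case, Proposition \ref{Purity_for_K_Adzum}, provides the template; the extra work here is that $D$ is defined only on the semi-local ring $\mathcal O$ rather than on all of $V$, so one must first spread it out.

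First I would spread out: by a standard limit argument (Azumaya algebras are finitely presented, and $\mathcal O = \operatorname{colim}_{X''' \ni \{x_i\}} \Gamma(X''', \mathcal O_{X'''})$ as $X'''$ ranges over affine open neighborhoods of $\{x_1,\dots,x_n\}$ in $X'$), there is such an $X''' \subseteq X'$ together with an Azumaya $\mathcal O_{X'''}$-algebra $\mathcal D$ whose restriction to $\mathcal O$ is $D$. After replacing $X'$ by $X'''$, I may assume $\mathcal D$ is defined on all of $X'$.

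Next, for each $f: Y \to X'$ in $Sm'/V$ and each open $W \subseteq Y$ with closed complement $Z$, I would set
$$E^{-n}(Y,W) := K_n(Y \text{ on } Z;\, f^*\mathcal D),$$
the Thomason--Trobaugh $K$-theory of perfect complexes of $f^*\mathcal D$-modules acyclic on $W$, equipped with the boundary maps from \cite{TT}. Functoriality, the localization long exact sequence, and Nisnevich excision are formal consequences of the setup of \cite{TT}. Homotopy invariance is obtained by choosing an \'{e}tale cover $\tilde X \to X'$ that splits $\mathcal D$, using Morita equivalence to reduce to the untwisted case, applying Quillen's homotopy invariance for regular schemes, and descending along the cover. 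Combined with the devissage identification $E^{-n+1}_y(U) \cong K_{n-1}(D \otimes_{\mathcal O} k(y))$ for $y \in U^{(1)}$, this makes $(E,\partial)$ a $\mathbb Z$-graded continuous cohomology theory on the subcategory of $Sm'Op/V$ of pairs mapping to $X'$; this subcategory contains every ingredient used in the Section 9 construction of \cite{Pan0} applied to $U$. Theorem \ref{General_Purity}(b) then yields exactness of the Cousin complex at $E^{-n}(\eta) = K_n(D \otimes_{\mathcal O} \mathcal K)$, which is precisely the exactness claimed.

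The main obstacle I anticipate is the homotopy-invariance step for the twisted theory in mixed characteristic. \'{E}tale-locally $\mathcal D$ is a matrix algebra, but descending from the splitting cover requires care because of the positive residue characteristic $p$. One can circumvent $p$-primary problems by splitting $\mathcal D$ over a finite \'{e}tale (Galois) cover of $X'$ and using the induced transfer maps in Thomason--Trobaugh $K$-theory to push the argument back down, much as in the untwisted treatments of \cite{TT} and \cite{GL}; the remainder of the proof is then formal, consisting of unpacking Theorem \ref{General_Purity}(b) in the indexing used to define $E$.
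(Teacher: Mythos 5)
There is a genuine gap, and it is precisely the one the paper flags in Remark \ref{two_propositions}: the Thomason--Trobaugh $K$-groups with coefficients in $D$ do \emph{not} form a cohomology theory on $Sm'Op/V$ in the sense used by Theorem \ref{General_Purity}, because $D$ (even after spreading out to an Azumaya algebra $\mathcal D$ on a neighborhood $X'''\subseteq X'$) does not come from $V$. Your restriction to the slice category of pairs mapping to $X'$ does suffice to \emph{write down} the Cousin complex of \cite[Section 9]{Pan0}, but it does not suffice to run the \emph{proof} of Theorem \ref{General_Purity}(b): that proof goes through items (c),(d), i.e.\ the vanishing of the support extension maps, which is obtained from the moving lemma (Theorem \ref{An extended_moving_lemma}). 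The homotopy $\Phi_t\colon \mathbb A^1\times U/(U-Z^{new})\to X'/(X'-Z)$ is a morphism of pointed Nisnevich sheaves built from the correspondence $\mathbb A^1_{U'}\xleftarrow{\tau}\mathcal X\xrightarrow{p_X}X'$; it is not a morphism over $X'$. Consequently the pullback $\Phi_t^*$ has no meaning in your theory with the \emph{fixed} twist $pr^*(\mathcal D|_{U'})$ on $\mathbb A^1\times U'$: the class one actually obtains lives in $K$-theory of $\mathcal X$ twisted by $p_X^*\mathcal D$, which is not identified with $p^*(\mathcal D|_{U'})$, so homotopy invariance cannot be invoked to compare $i_0^*$ and $i_1^*$ inside one twisted theory. (Separately, your proposed proof of homotopy invariance by \'etale splitting and ``descending along the cover'' is not available either, since $K$-theory does not satisfy \'etale descent integrally; this particular point could be repaired via $K'$-theory with coefficients in an Azumaya algebra, but it does not cure the functoriality problem above.)

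The paper therefore uses a different method: it proves directly the vanishing of the support extension map $ext_{2,1}\colon K'_n(U;D)_{\geq 2}\to K'_n(U;D)_{\geq 1}$. Using Theorem \ref{geom_pres_mixed_char} one arranges $Z$ finite over a base $B$, forms $\mathcal Z=Z\times_B X_B$ with its two projections $\Pi$ and $p_Z$, and works with the Azumaya algebra ${_{Z}}D_X=\Pi^*(D)\otimes p^*_Z(D^{op}_Z)$ on $\mathcal Z$; following \cite{PS} the class $[\Delta_*(D_Z)]\in K_0(\mathcal Z;{_{Z}}D_X)$ has rank zero, hence vanishes after shrinking semi-locally around $Z\times_B Z$, and the pairing together with the finite transfer $\Pi_*$ then shows that the pushforward $i_*\colon K'_n(Z;D)\to K'_n(Z_{new};D)$ is zero, where $Z_{new}=\Pi(\mathcal Z_{red})$ has codimension $\geq 1$. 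This bimodule/correspondence trick is exactly what replaces the unavailable ``cohomology theory plus moving lemma'' formalism in the twisted case; your proposal, as written, would need this (or an equivalent device) and so does not close.
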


\begin{rem}\label{two_propositions}
Proposition \ref{Purity_for_K_Adzum} is a particular case of the item (b) of Theorem \ref{General_Purity}.
This is the case since the Thomason-Throughbor $K$-groups with coefficients in $\mathcal A$
form {\it a cohomology theory} on the category $SmOp/V$ in the sense of \cite{PSm}.
Details are given below in this section.

However, the Thomason-Throughbor $K$-groups with coefficients in $D$ does not form
a cohomology theory on the category $SmOp/V$ in the sense of \cite{PSm} (indeed,
the algebra $D$ does not come from $V$). This show that a proof of Proposition
\ref{Purity_for Azumaya} requires a different method (see below in this section).
\end{rem}

\begin{proof}[Reducing Theorem \ref{Gr_Serre_for Azumaya_const} to Proposition \ref{Purity_for_K_Adzum}]
Consider a commutative diagram of groups
\begin{equation}
\label{RactangelDiagram}
    \xymatrix{
K_1(\mathcal A\otimes_A \mathcal O) \ar[rr]^{\eta^*} \ar[d]_{Nrd}&& K_1(\mathcal A\otimes_A \mathcal K)\ar[rr]^{\partial} \ar[d]^{Nrd} &&
\oplus_{y\in U^{(1)}}K_0(\mathcal A\otimes_A k(y)) \ar[d]^{Nrd}  \\
\mathcal O^{\times} \ar[rr]^{\eta^*}&& \mathcal K^{\times} \ar[rr]^{\partial} && \oplus_{y\in U^{(1)}}K_0(k(y))\\ }
\end{equation}
By Proposition \ref{Purity_for_K_Adzum} the complex on the top is exact. The bottom map $\eta^*$ is injective.
The right hand side vertical map $Nrd$ is injective. Thus, the map
$$\mathcal O^{\times}/Nrd(K_1(\mathcal A\otimes_A \mathcal O))\to \mathcal K^{\times}/Nrd(K_1(\mathcal A\otimes_A \mathcal K))$$
is injective.
The image of the left vertical map coincides with
$Nrd((\mathcal A\otimes_A \mathcal O)^{\times})$
and the image of the middle vertical map coincides with
$Nrd((\mathcal A\otimes_A \mathcal K)^{\times})$. Thus, the map
$$\mathcal O^{\times}/Nrd((\mathcal A\otimes_A \mathcal O)^{\times})\to \mathcal K^{\times}/Nrd((\mathcal A\otimes_A \mathcal K)^{\times})$$
is injective. The derivation of Theorem \ref{Gr_Serre_for Azumaya_const} from Proposition
\ref{Purity_for_K_Adzum} is completed.
\end{proof}

\begin{proof}[Proof of Proposition \ref{Purity_for_K_Adzum}]
There are two ways of proving this Proposition. To use the Thomason-Throughbor $K$-groups with coefficients in $\mathcal A$
or to use Quillen's $K'$-groups with coefficients in $\mathcal A$.
We prefer to use the Thomason-Throughbor $K$-groups with coefficients in $\mathcal A$.
They form {\it a cohomology theory} on the category $SmOp/V$ in the sense of \cite{PSm}.
Namely, for each integer $n$ and each pair $(X,X-Z)$ in $SmOp/V$ put
$E^n(X,X-Z)=K_{-n}(X on Z;\mathcal A)$ (see \cite{TT}).
The definition of $\partial$ is contained in \cite[Theorem 5.1]{TT}.
Repeating literally arguments of
\cite[Example 2.1.8]{PSm} we see that this way we get a cohomology theory
on $SmOp/V$. Moreover, If $X$ is quasi-projective then
$K(X on X; \mathcal A)$ coincides with the Quillen's $K$-groups $K^Q_n(X)$ by
\cite[Theorems 3.9 and 3.10]{TT}.
Now our proposition follows from the item (b) of Theorem \ref{General_Purity}.
\end{proof}

\begin{proof}[Reducing Theorem \ref{Gr_Serre_for Azumaya} to Proposition \ref{Purity_for Azumaya}]
Consider the commutative diagram of groups
\begin{equation}
\label{RactangelDiagram}
    \xymatrix{
K_1(D) \ar[rr]^{\eta^*} \ar[d]_{Nrd}&& K_1(D\otimes_{\mathcal O} \mathcal K)\ar[rr]^{\partial} \ar[d]^{Nrd} &&
\oplus_{y\in U^{(1)}}K_0(D\otimes_{\mathcal O} k(y)) \ar[d]^{Nrd}  \\
\mathcal O^{\times} \ar[rr]^{\eta^*}&& \mathcal K^{\times} \ar[rr]^{\partial} && \oplus_{y\in U^{(1)}}K_0(k(y))\\ }
\end{equation}
By Proposition \ref{Purity_for Azumaya} the complex on the top is exact. Repeat now the arguments
from reducing
Theorem \ref{Gr_Serre_for Azumaya_const} to Proposition
\ref{Purity_for_K_Adzum}.
These will complete reducing the theorem to
Proposition \ref{Purity_for Azumaya}.
\end{proof}

\begin{proof}[Proof of Proposition \ref{Purity_for Azumaya}]
To prove this proposition
it sufficient to prove vanishing of the support extension map
$ext_{2,1}: K'_{n}(U;D)_{\geq 2}\to K'_{n}(U;D)_{\geq 1}$.
Prove that $ext_{2,1}=0$.
Take an $a\in K'_n(U;D)_{\geq 2}$.
We may assume that $a\in K'_n(Z;D)$ for a closed $Z$ in $U$ with $codim_U(Z)\geq 2$.
Enlarging $Z$ we may assume that each its irreducible component
contains at least one of the point $x_i$'s and still $codim_U(Z)\geq 2$.
Our aim is to find a closed subset $Z_{ext}$ in $U$ containing $Z$ such that
the element $a$ vanishes in $K'_n(Z_{ext};D)$ and
$codim_U(Z_{ext})\geq 1$. We will follow the method of \cite{PS}.

One can find closed points  $x_{n+1},..., x_M$ in $X$ such that \\
(a) each irreducible component of $X_v$ contains at least one of the point $x_j$'s ($j\in \{1,...,M\}$);\\
(b) for the ring
$\mathcal O'=\mathcal O_{X,x_1,..., x_M}$
and the scheme
$U'=Spec~\mathcal O'$
the set $Z$ is closed in $U'$.
Let $X^*$ be a neighborhood of the points
$x_j$'s ($j\in \{1,...,M\}$)
(containing all generic points of the scheme $X_v$,
but not containing any irreducible component of the scheme $X_v$).
Let
$Z^*$ be the closure of $Z$ in $X^*$.

By Theorem \ref{geom_pres_mixed_char} one can find inside $X^*$
an affine neighborhood $X^{\circ}$ of points
$x_1,x_2,\dots,x_M$, an open affine subscheme $S\subseteq \Pro^{d-1}_A$ and a flat $A$-morphism \\
$$q: X^{\circ}\to S$$
such that $q$ is smooth in a neighborhood of $q^{-1}(S_v)$ and
$Z^{\circ}/S$ is finite, where $Z^{\circ}=Z^*\cap X^{\circ}$.

Now take back to the points $x_1,...,x_n$, the scheme $U$ and the Azumaya $\mathcal O$-algebra $D$.
Put $s_i=q(x_i)$. Consider the semi-local ring
$\mathcal O_{S,s_1,...,s_n}$, put $B=Spec \mathcal O_{S,s_1,...,s_n}$ and
$X_B=q^{-1}(B)\subset X^{\circ}$. Put $Z_B=Z^{\circ}\cap X_B$.
Write $q_B$ for $q|_{X_B}: X_B\to B$. Note that $Z_B$ is finite over $B$.
Since $B$ is semi-local, hence so is $Z_B$. Also, $Z_B$ contains all the points
$x_1,...,x_n$. Hence any neighborhood of all the point $x_1,...,x_n$ in $X_B$
contains $Z_B$.
These show that $Z_B=Z$ and $U$ is in $X_B$. Since now on write $Z$ for $Z_B$.

Replacing $X_B$ with an appropriate neighborhood of $U$ in $X_B$ we may assume that
the Azumaya algebra $D$ is an Azumaya algebra over $X_B$. Since $Z\subset U$ and $Z$ is finite over $S$
it is closed also in the chosen neighborhood of $U$. We will still write $X_B$ for
the chosen neighborhood. Recall that we are given with the element $a\in K'_n(Z;D)$.

Now put $\mathcal Z=Z\times_B X_B$. Let
$\Pi: \mathcal Z\to X_B$
be the projection to $X_B$
and
$p_Z: \mathcal Z\to Z$ be the projection to $Z$.
The closed embedding $in: Z\hookrightarrow X_B$ defines a section
$\Delta=(id\times in): Z\to \mathcal Z$
of the projection $p_Z$. Also one has $in=\Pi \circ \Delta$.
Put $D_Z=in^*(D)$ and write ${_{Z}}D_X$ for the Azumaya algebra
$\Pi^*(D)\otimes p^*_Z(D^{op}_Z)$
over $\mathcal Z$.
The $\mathcal O_{\mathcal Z}$-module
$\Delta_*(D_Z)$
has an obvious left ${_{Z}}D_X$-module structure.
And it is equipped with an obvious epimorphism
$\pi: {_{Z}}D_X\to \Delta_*(D_Z)$
of the left ${_{Z}}D_X$-modules.
Following \cite{PS} one can see that $I:=Ker(\pi)$
is a left projective ${_{Z}}D_X$-module. Hence the left ${_{Z}}D_X$-module
$\Delta_*(D_Z)$ defines an element $[\Delta_*(D_Z)]=[{_{Z}}D_X]-[I]$
in $K_0(\mathcal Z;{_{Z}}D_X)$. This element has rank zero.
Hence by \cite{} it vanishes semi-locally on $\mathcal Z$.

Since $Z/B$ is finite the morphism $\Pi$ is finite.
Hence any neighborhood of $Z\times_B Z$ in $\mathcal Z$
contains a neighborhood of the form $Z\times_B W$,
where $W\subset X_B$ is an open containing $Z$.
Write $\mathcal Z_W$ for $Z\times_B W$.
The set $Z$ is finite over $B$ and semi-local.
Hence $Z\times_B Z$ is finite over the first coordinate
and semi-local. Hence there is an open $W$ in $X_B$ containing $Z$ such that
$$0=[\Delta_*(D_Z)]|_{\mathcal Z_W}\in K_0(\mathcal Z_W;{_{Z}}D_X).$$
To simplify notation we will write below in this proof $X_B$ for this $W$,
$\mathcal Z$ for this $\mathcal Z_W$, $\Pi$ for $\Pi|_{\mathcal Z_W}: \mathcal Z_W\to W$ and
$q_Z: \mathcal Z\to Z$ for the projection $\mathcal Z_W\to Z$.

Since $\Pi$ is a finite morphism.
Hence the image
$\Pi(\mathcal Z_{red})$ is closed in $X_B$.
Write $Z_{new}$ for $\Pi(\mathcal Z_{red})$ in $X_B$.
Clearly, we have closed inclusions $Z\subset Z_{new}\subset X_B$.
The nearest aim is to show that for the inclusion
$i: Z\hookrightarrow Z_{new}$
one has $i_*(a)=0$ in $K'_n(Z_{new};D)$.
This will allow to put $Z_{ext}=Z_{new}\cap U$
and to show that $a$ vanishes in $K'_n(Z_{ext};D)$.
Recall that the morphism $q_B: X_B\to B$ is flat. Hence so is the morphism $p_Z$.

 The functor $(P,M)\mapsto P\otimes_{p^*_Z(D_Z)} M$
induces a bilinear pairing
$$\cup_{q^*_Z(D_Z)}: K_0(\mathcal Z;{_{Z}}D_X)\times K_n(\mathcal Z;q^*_Z(D_Z))\to K_n(\mathcal Z;q^*_X(D))$$
Each element
$\alpha\in K_0(\mathcal Z;{_{Z}}D_X)$
defines a group homomorphism
$$\alpha_*=\Pi_*\circ ( \alpha \cup_{q^*_Z(D_Z)} - )\circ q^*_Z: K'_n(Z;D)=K'_n(Z;D_Z)\to K'_n(Z_{new};D)$$
which takes an element $b\in K'_n(Z;D_Z)$ to the one
$b\mapsto \Pi_*(\alpha \cup_{q^*_Z(D_Z)} q^*_Z(b))$
in $K'_n(Z_{new};D)$.

Following \cite{PS} we see that the map $[\Delta_*(D_Z)]_*$ coincides with the map
$i_*: K'_n(Z;D)=K'_n(Z;D_Z)\to K'_n(Z_{new};D)$. The equality
$0=[\Delta_*(D_Z)]\in K_0(\mathcal Z;{_{Z}}D_X)$ proven just above
shows that the map $i_*$ vanishes.

Since $Z$ is in $U\cap Z_{new}=Z_{ext}$, hence we have a closed inclusion $in: Z\hookrightarrow Z_{ext}$.
The inclusion $U\xrightarrow{can} X_B$ is a flat morphism. Hence the inclusion
$inj: Z_{ext}=Z_{new}\cap U\to Z_{new}$ is also a flat morphism. Thus the map
$inj^*: K'_n(Z_{new};D)\to K'_n(Z_{ext};D)$
is well-defined. Moreover $inj^*\circ i_*=in_*$.
The map $i_*$ vanishes, hence the map $in_*$ vanishes also.
Particularly, $in_*(a)=0$.
The proposition is proved.
\end{proof}

\section{Proofs of Theorems \ref{General_Purity}, \ref{cousin_1}, \ref{General_Purity_z}, \ref{cousin_1_z}.}
\begin{proof}[Reducing Theorem \ref{General_Purity} to Theorem \ref{An extended_moving_lemma}]
The nearest aim is to prove the assertions (c) and (d).
So, we must prove the vanishing of the support extension map
$$ext_{2,1}: E^m_{\geq 2}(U)\to E^m_{\geq 1}(U).$$
Prove that $ext_{2,1}=0$.
Take an $a\in E^m_{\geq 2}(U)$. We may assume that $a\in E^m_Z(U)$ for a closed $Z$ in $U$ with
$codim_U(Z)\geq 2$.
One can find closed points  $x_{n+1},..., x_M$ in $X$ such that \\
(a) each irreducible component of $X_v$ contains at least one of the point $x_j$'s ($j\in \{1,...,M\}$);\\
(b) for the ring
$\mathcal O''=\mathcal O_{X,x_1,..., x_M}$
and the scheme
$U''=Spec~\mathcal O''$
the set $Z$ is closed in $U''$.
By the excision property for the open embedding $in: U\hra U''$ the pull-back
$$in^*: E^m_Z(U'')\to E^m_Z(U)$$
is an isomorphism. Let $a''\in E^m_Z(U'')$ be a unique element such that $in^*(a'')=a$.
For an appropriate neighborhood $X''$ of the points
$x_j$'s ($j\in \{1,...,M\}$)
(containing all generic points of the scheme $X_v$,
but not containing any irreducible component of the scheme $X_v$)
and
the closure $Z''$ of $Z$ in $X''$ one can find an element
$$\tilde a''\in E^m_{Z''}(X'')$$
such that
$\tilde a''|_{U''}=a''\in E^m_Z(U'')$.

Applying Theorem \ref{An extended_moving_lemma} to the scheme
$X$, the points $x_1,x_2,\dots,x_M \in X$, the open $X''\subseteq X$, the closed subset $Z''\subset X''$, the ring $\mathcal O''$ and the scheme
$U''=Spec \mathcal~O''$,
we get a closed subset $Z^{new}$ in $X''$ containing $Z''$ with
$codim_{X''}(Z^{new})\geq 1$
and a morphism of pointed Nisnevich sheaves
$$\Phi_t: \Aff^1 \times U/(U-Z^{new}) \to X''/(X''-Z'')$$
in the category $Shv_{nis}(Sm/V)$. Moreover, \\
$$\Phi_0=[ U/(U-Z^{new})\xrightarrow{can} X/(X-Z^{new})\xrightarrow{p} X''/(X''-Z'')],$$
and $\Phi_1$ takes $ U/(U-Z^{new})$ to the distinguished point $*\in X''/(X''-Z'')$. \\
The
morphism $\Phi_t$ induces a map
$\Phi^*_t: E^m_{Z''}(X'')\to E^m_{\Aff^1\times Z^{new}}(\Aff^1\times U'')$
such that \\
(a) $i^*_1\circ \Phi^*_t: E^m_{Z''}(X'')\to E^m_{Z^{new}}(U'')$
is the zero map and \\
(b) $i^*_0\circ \Phi^*_t: E^m_{Z''}(X'')\to E^m_{Z^{new}}(U'')$
is the composite map of pull-backs \\
$E^m_{Z''}(X'')\to E^m_{Z}(U'')\to E^m_{Z^{new}\cap U''}(U'')$.\\\\
Since the pull-bach map
$E^m_{Z^{new}}(U'')\to E^m_{\Aff^1\times Z^{new}}(\Aff^1\times U'')$
is an isomorphism one has an equality
$i^*_1\circ \Phi^*_t=i^*_0\circ \Phi^*_t$.
Since $0=i^*_1\circ \Phi^*_t$ the map
$$E^m_{Z''}(X'')\to E^m_{Z}(U'')\xrightarrow{ext} E^m_{Z^{new}\cap U''}(U'')$$
vanishes.
Since $\tilde a''|_{U''}=a''\in E^m_Z(U'')$, hence $ext(a'')=0$ in $E^m_{Z^{new}\cap U''}(U'')$.
Recall that $Z$ is closed in $U''$. Consider a commutative diagram
\begin{equation}
\label{RactangelDiagram}
    \xymatrix{
a''\in E^m_{Z}(U'') \ar[rr]^{ext}\ar[d]_{in^*}&& E^m_{Z^{new}\cap U''}(U'')\ar[d]^{} &\\
a\in E^m_Z(U) \ar[rr]^{ext}&& E^m_{Z^{new}\cap U}(U), &\\ }
\end{equation}
in which the horizontal arrows are the support extension maps and the vertical ones are
the pullback maps (any support extension map is a pullback map).
Since $ext(a'')=0$ and $a=in^*(a'')$, hence $ext(a)=0$.
We proved that $ext_{2,1}=0$.

Let $c\geq 1$ and $m\in \mathbb Z$. Repeating literally the arguments above we prove the vanishing of the support extension map
$ext_{c+1,c}: E^m_{\geq c+1}(U)\to E^m_{\geq c}(U)$.
This gives for each integer $m$ a short exact sequence
$0\to E^m_{\geq c}(U)\to E^m_{(c)}(U)\xrightarrow{\partial} E^{m+1}_{\geq c+1}(U)\to 0$.

Prove now the assertion (b).
The vanishing of the map $ext_{2,1}$ yields the injectivity of the map pull-back map
$E^m_{\geq 1}(U)\to \oplus_{y\in U^{(1)}}E^{m+1}_y(U)$.
Consider the composite map
$$\partial: E^m(\eta)\xrightarrow{\partial_{\geq 1}} E^{m+1}_{\geq 1}(U)\to \oplus_{y\in U^{(1)}}E^{m+1}_y(U).$$
Since the second arrow in this composition is injective, hence
$\ker(\partial)=\ker(\partial_{\geq 1})$. But
$\ker(\partial_{\geq 1})=Im[E^m(U)\to E^m(\eta)].$
Thus, the sequence
$$E^m(U)\to E^m(\eta)\xrightarrow{\partial} \oplus_{y\in U^{(1)}}E^{m+1}_y(U)$$
is exact. The purity for $E^m$ is proved.
The assertion (a) is a direct consequence of statements (b),(c) and (d). The theorem is proved.
\end{proof}

\begin{proof}[Reducing Theorem \ref{cousin_1} to Theorem \ref{An extended_moving_lemma}]
Let $m\in \mathbb Z$ be as in Theorem \ref{cousin_1}. The hypotheses of the theorem yield
exactness of the complex
$0\to E^m(U)\to E^m(\eta)\xrightarrow{\partial} E^{m+1}_{\geq 1}(U)\to 0$.
This observation together with the item (d) of Theorem
\ref{General_Purity}
complete the proof of the theorem.
\end{proof}

\begin{proof}[Reducing Theorem \ref{General_Purity_z} to Theorem \ref{An extended_moving_lemma}]
The nearest aim is to prove the assertions (c) and (d).
So, take integers $m$ and $c$ with $c\geq 1$ and prove the vanishing of the support extension map
$$ext_{c+1,c}: E^m_{\geq c+1}(U_z)\to E^m_{\geq c}(U_z).$$
Take an $a\in E^m_{\geq c+1}(U)$. We may assume that $a\in E^m_Z(U)$ for a closed $Z$ in $U_z$ with
$codim_U(Z)\geq c+1$. For each closed point $x\in X$ which is in the closure $\bar {\{z\}}$ of $\{z\}$ in $X$
one has $U_z\subset U_x$. Write $\bar Z$ for the closure of $Z$ in $X$.
The continuity of the presheaf $E^m$ shows that there exist $x\in \bar {\{z\}}$
and an element $a_x\in E^m_{\bar Z\cap U_x}(U_x)$ such that $a_x|_{U_z}=a\in E^m_Z(U)$.
By the item (c) of Theorem \ref{General_Purity} there exists a closed subset $Z_{new}$ in $U_x$
containing $\bar Z\cap U_x$ and
of codimension $\geq c$ in $U_x$ such that $a_x$ vanishes in $E^m_{Z_{new}}(U_x)$. Thus,
$a$ vanishes in $E^m_{Z_{new}\cap U_z}(U_z)$. The assertion (c) is proved. Since the assertion (c)
yields the assertion (d) the assertion (d) is proved too.

The assertion (b) is an easy consequence of vanishing of the support extension map
$ext_{2,1}: E^m_{\geq c+1}(U_z)\to E^m_{\geq c}(U_z)$. Just repeat the arguments from the proof
of the item (b) of Theorem \ref{General_Purity_z}.

The assertion (a) is a direct consequence of statements (b),(c) and (d). The theorem is proved.
\end{proof}

\begin{proof}[Reducing Theorem \ref{cousin_1_z} to Theorem \ref{An extended_moving_lemma}]
Let $m\in \mathbb Z$ be as in Theorem \ref{cousin_1_z}. The hypotheses of the theorem yield
exactness of the complex
$0\to E^m(U_z)\to E^m(\eta)\xrightarrow{\partial} E^{m+1}_{\geq 1}(U_z)\to 0$.
This observation together with the item (d) of Theorem
\ref{General_Purity_z}
complete the proof of the first part of the theorem.

Now prove the second part. Let $i$ be an integer as in the second part. It is sufficient to show that the map
$E^i(U_z)\to E^i(\eta)$ is injective for $i=m$ and $i=m+1$. Let $\bar {\{z\}}$ be the closure of $\{z\}$ in $X$.
Then for each closed point $x\in \bar {\{z\}}$ one has the embedding $U_z\subseteq U_x$. Let $a\in E^i(U_z)$ be
an element vanishing in $E^i(\eta)$. By the continuity of the presheaf $E^i$ there exist a closed point
$x\in \bar {\{z\}}$ and an element $a_x\in E^i(U_x)$ such that $a_x|_{U_z}=a$. Since $a|_{\eta}=0$ it follows
that $a_x|_{\eta}=0$. Thus $a_x=0$ by the hypotheses of the second part of the theorem. Hence $a=0$.
The second part is proved.
\end{proof}

\section{Proof of Theorem \ref{cousin_BW}}\label{sect:cousin_BW}
\begin{proof}[Proof of Theorem \ref{cousin_BW}]
It is known \cite{B} that for each integer $m$ which is not divisible by $4$ the Balmer--Witt groups
$W^m(\mathcal O)$ and $W^m(\mathcal K)$ vanish. Particularly, for these integers $m$ the map
$\eta^*: W^m(\mathcal O)\to W^m(\mathcal K)$ is injective. Thus, by Theorem \ref{cousin_1}
the injectivity of the map $\eta^*: W(\mathcal O)\to W(\mathcal K)$
yields the exactness of
the Cousin complex
$Cous(W,U,m)$ for each integer $m$.
To prove the injectivity of the map $\eta^*: W(\mathcal O)\to W(\mathcal K)$ we closely follow ideas of \cite{OP2}.

Let $a\in W(\mathcal O)$ which vanishes in $W(\mathcal K)$.
The Mayer-Vietories property and the continuity of the presheaf $W|_{Sm'/V}$ on $Sm'/V$
allows to find appropriate closed points $x_{n+1}, ...,x_N$ in $X$,
 a Zariski neighborhood $X'$ of all the points $x_i$'s
($i=1,2,...,N$) as in the section \ref{agreements}
and an element $\tilde a\in W(X')$ such that
$a=\tilde a|_U$. Recall that
$X'$ contains
{\it all generic points}
of the scheme $X_v$,
but
{\it does not contain any irreducible component}
of the scheme $X_v$.

By the hypothesis of the theorem the element $\tilde a$ vanishes at the generic point $\eta$ of the scheme $X'$.
Let $\mathcal O_{X',X'_v}$ be the ring defined in
Notation \ref{inj_to_gen_point}. It is a semi-local Dedekind domain.
Thus, the homomorphism
$W(\mathcal O_{X',X'_v})\to W(\mathcal K)$ is injective. Hence $\tilde a$ vanishes in $W(\mathcal O_{X',X'_v})$.
So, there is a non-zero $f\in \Gamma(X',\mathcal O_{X'})$ such that
the closed subset $Y:=\{f=0\}$ of $X'$
does not contain any irreducible component of the scheme $X'_v$.
and the element $\tilde a|_{X'_f}$ vanishes.

Applying Corollary \ref{Homotopy_w_Traces} to the scheme
$X$, the points $x_1,x_2,\dots,x_N \in X$, the open $X'\subseteq X$á the closed subset $Y\subset X'$
we get an affine Zariski open neighborhood $U'$ of points
$x_1,x_2,\dots,x_N \in X'$ and
a diagram of $A$-schemes and $A$-morphisms of the form
$$\Aff^1_{U'} \xleftarrow{\tau} \mathcal X \xrightarrow{p_X} X'$$
with an $A$-smooth irreducible scheme $\mathcal X$ and a finite surjective morphism $\tau_{new}$
such that the canonical sheaf $\omega_{\mathcal X/V}$ is isomorphic to the structure sheaf $\mathcal O_{\mathcal X}$.
Moreover, if we write $p: \mathcal X\to U'$ for the composite map $pr_{U'}\circ \tau$
and $\mathcal Y$ for $p^{-1}_X(Y)$, then these data
enjoy the following properties:\\
(0) the closed subset $\mathcal Y$ of $\mathcal X$ is quasi-finite over $U'$;\\
(1) there is a section $\Delta_{}: U'\to \mathcal X$ of the morphism $p_{}$ such that $\tau_{}\circ \Delta_{}=i_0$ and $p_X\circ \Delta_{}=can$,
where $i_0$ is the zero section of $\Aff^1_{U'}$ and $can: U'\hookrightarrow X'$ is the inclusion;\\
(2) the morphism $\tau_{}$ is \'{e}tale as over $\{0\}\times U'$, so over $\{1\}\times U'$;\\
(3) for $\mathcal D_1:=\tau_{}^{-1}(\{1\}\times U')$ and $\mathcal Y:=p^{-1}_X(Y)$ one has $\mathcal D_1\cap \mathcal Y=\emptyset$; \\
(4) for $\mathcal D_0:=\tau_{}^{-1}(\{0\}\times U')$ one has $\mathcal D_0=\Delta_{}(U')\sqcup \mathcal D'_0$ and $\mathcal D'_0\cap \mathcal Y=\emptyset$;\\

Consider a category $\text{Aff}$ of affine $\Aff^1_{U'}$-schemes and $\Aff^1_{U'}$-morphisms.
For a scheme $T\in \text{Aff}$ write $\mathcal T$ for $T\times_{\Aff^1_{U'}} \mathcal X$.
There are two interesting presheaves on $\text{Aff}$:
$$T\mapsto W(\mathcal T) \ \text{and} \ T\mapsto W(T).$$
Choose an isomorphism
$l: \mathcal O_{\mathcal X}\to \omega_{\mathcal X/V}$
of the $\mathcal O_{\mathcal X}$-modules. Similarly to \cite[Section 12]{OP2} this isomorphism defines
a functor transformation (an Euler trace)
$$Tr^{\mathcal E}: T\mapsto [Tr^{\mathcal E}_{\mathcal T/T}:  W(\mathcal T)\to W(T)],$$
which enjoy properties similar to the properties (1) to (6) as in \cite[Section 12]{OP2}.
Set $\alpha=p^*_X(\tilde a)\in W(\mathcal X)$. Then following \cite[Section 3]{OP2} one gets an equality in $W(U')$
$$Tr^{\mathcal E}_{\mathcal D_1/U'}(\alpha|_{\mathcal D_1})=Tr^{\mathcal E}_{\mathcal D'_0/U'}(\alpha|_{\mathcal D'_0})+
u\cdot \Delta_{}^*(\alpha)$$
for a unit
$u\in \Gamma(U', \mathcal O^{\times}_{})$. Our choice of the element $f$ and the properties (3),(4) show that
$\alpha|_{\mathcal D_1}=0$ and $\alpha|_{\mathcal D'_0}=0$. Hence $\Delta_{}^*(\alpha)=0$.
By the property (1) one has $p_X\circ \Delta_{}=can$.
Thus, $\tilde a|_{U'}=\Delta_{}^*(\alpha)=0$. Hence $0=(\tilde a|_{U'})|_U=a$ in $W(U)$.
The injectivity of the map $W(\mathcal O)\to W(\mathcal K)$ is proved.
Theorem \ref{cousin_BW} is proved.
\end{proof}

\section{Presheaves with transfers}
Let $A$, $p>0$, $V$, $Sm'/V$, $Sch'/V$ be as in Section \ref{agreements}.

\begin{defn}\label{Pre_w_Tr}
A presheaf with transfers on $Sch'/V$ is an additive presheaf \ $\cal F$ such that for each finite flat
morphism $\pi: Y\to S$ in $Sch'/V$ there is given a homomorphism $\Tr_{Y/S}: \mathcal F(Y)\to \mathcal F(S)$.
Moreover these homomorphisms enjoy the following properties \\
(i) base change: if $f: S'\to S$ is a $V$-morphism, then for $Y'=S'\times_S Y$ and $F=pr_Y: Y'\to Y$
one has an equality $f^*\circ \Tr_{Y/S}=\Tr_{Y'/S'}\circ F^*: \mathcal F(Y)\to \mathcal F(S')$;\\
(ii) additivity: if $Y=Y'\sqcup Y''$, then $Tr_{Y/S}=Tr_{Y'/S}+Tr_{Y''/S}$;\\
(iii) normalization: if $\pi=id_S: S\to S$, then $Tr_{Y/S}=id$.

An assignment $[\pi: Y\to S]\mapsto [Tr_{Y/S}: \mathcal F(Y)\to \mathcal F(S)]$
subjecting conditions (1)--(3) is called often a trace structure on the presheaf $\mathcal F$.
So, a presheaf with transfers on $Sch'/V$ is an additive presheaf $\cal F$ equipped with a trace structure.

An \'{e}tale sheaf with transfers on $Sch'/V$ is a presheaf with transfers which is an  \'{e}tale sheaf
on $(Sch'/V)_{Et}$.
\end{defn}

\begin{exam}\label{Tr_on_Et_Coh}
Let $\mathcal F$ be an \'{e}tale sheaf with transfers on $Sch'/V$ and let $m\geq 0$ be an integer.
Let $\pi: Y\to S$ in $Sch'/V$ be a finite flat
morphism.
Let $S_{et}$ be the small \'{e}tale site of $S$.
Each $S'\in S_{et}$ is in $Sch'/V$ and each morphism $S''\to S'$ in $S_{et}$ is a morphism in $Sch'/V$.
And similar observation is true for the small \'{e}tale site $Y_{et}$.
Then on the small \'{e}tale site $S_{et}$ there are two \'{e}tale sheaves:
$\mathcal F$ and $\pi_*(\mathcal F)$.
Since
$\mathcal F$ is an \'{e}tale sheaf with transfers on $Sch'/V$ it follows there is given a sheaf morphism
$\Tr_{Y/S}: \pi_*(\mathcal F)\to \mathcal F$ on $S_{et}$. This sheaf morphism induces a group homomorphism
$$H^{m}_{et}(Y,\mathcal F)=H^{m}_{et}(S,\pi_*(\mathcal F))\xrightarrow{Tr^m_{Y/S}} H^{m}_{et}(S,\mathcal F).$$
Clearly, the additive presheaf $H^m_{et}(-,\mathcal F)$ on $Sch'/V$ together with the homomorphisms $Tr^m_{Y/S}$
is a presheaf with transfers on $Sch'/V$. The trace structure $Y/S \mapsto Tr^m_{Y/S}$ on the
presheaf $H^m_{et}(-,\mathcal F)$ is called
{\it the induced trace structure}.
\end{exam}

The following two examples show that
{\it constant and locally constant \'{e}tale sheaves
} on $Sch'/V$
are naturally sheaves with transfers. And therefore their cohomology presheaves
are equipped with distinguished trace structures.
\begin{exam}\label{Const_is_w_Tr}
Let $A$ be an abelian group and let $d$ be an integer. Write $m_d: A\to A$ for the multiplication by $d$.
Let $\underline A$ be the constant \'{e}tale sheaf on $(Sch'/V)_{et}$.
We show now that the sheaf $\underline A$ is naturally a sheaf with transfers on $Sch'/V$.
Let $\pi: Y\to S$ be a surjective finite flat morphism in $Sch'/V$. Suppose schemes $S$ and $Y$ are connected. Then
$\pi_*(\mathcal O_Y)$ is a locally free $\mathcal O_S$ module. Since $S$ is connected this module
has a constant rank, say $[Y:S]$. In this case put $Tr_{Y/S}=m_{[Y:S]}: A=\underline A(Y)\to \underline A(S)=A$.

Suppose $S$ is connected and $Y=\sqcup_{i\in I}Y_i$, where each $Y_i$ is connected. Then each $Y_i$ is surjective finite flat over $S$.
Put $Tr_{Y/S}=\Sigma_{i\in I}Tr_{Y_i/S}: \oplus_{i\in I}\underline A(Y_i)\to \underline A(S)$. Finally, let
$S=\sqcup_{j\in J}S_j$, where each $S_j$ is connected. Put $Y_j=\pi^{-1}(S_j)$. Since $\pi$ is surjective it follows
that $Y_j$ is not empty. Put
$$Tr_{Y/S}=\oplus_{j\in J}Tr_{Y_j/S_j}: \oplus_{j\in J}\underline A(Y_j)\to \oplus_{j\in J}\underline A(S_j).$$
It is easy to check that the assignment
$[\pi: Y\to S]\mapsto [Tr_{Y/S}: \underline A(Y)\to \underline A(S)]$
defines a trace structure on the constant sheaf $\underline A$. This trace structure is called
{\it the standard trace structure} on $\underline A$.

By Example \ref{Tr_on_Et_Coh} for each integer $m$ the additive presheaf $H^m_{et}(-,\underline A)$ on $Sch'/V$ is equipped
with the trace structure induced the standard trace structure on $\underline A$. It is called
{\it the standard trace structure} on $H^m_{et}(-,\underline A)$.
\end{exam}

\begin{exam}\label{Loc_Const_is_w_Tr}
Recall that an \'{e}tale sheaf $\mathcal F$ on $Sch'/V$ is called locally constant if there exists an abelian group $A$
such that the sheaf $\mathcal F$ is isomorphic to the constant sheaf $\underline A$ locally for the \'{e}tale topology on
$Sch'/V$.

We prove now that the sheaf $\mathcal F$ is naturally a sheaf with transfers on $Sch'/V$.
For each $S\in Sch'/V$ and each point $x\in S$ denote $\mathcal F_x$ the stalk of $\mathcal F$ at the point $x$.
That is $\mathcal F_x=\mathcal F(S^{sh}_x)$, where $S^{sh}_x$ is the strict henselization of $S$ at the point $x$.
First note that for any surjective finite flat morphism $\pi: Y\to S$ and each point $x\in S$ and each point
$y\in Y$ over $x$ the pull-back map $\mathcal F_x\to \mathcal F_y$ is an isomorphism. This yields
that $\pi_*(\mathcal F)_x=\oplus_{y/x}\mathcal F_y=\oplus_{y/x}\mathcal F_x$.
Put $Tr_{\pi/x}=\Sigma_{y/x}m_{[Y^{sh}_y:S^{sh}_x]}: \oplus_{y/x}\mathcal F_x \to \mathcal F_x$.

The following claim we left to the reader:
there exists a unique trace structure
$$[\pi: Y\to S]\mapsto [Tr_{Y/S}: \pi_*(\mathcal F)(S)=\mathcal F(Y)\to \mathcal F(S)]$$
on $\mathcal F$ such that for each $S\in Sch'/V$ and each point $x\in S$ one has an equality
$$(Tr_{Y/S})_x=Tr_{\pi/x}: \pi_*(\mathcal F)_x\to \mathcal F_x.$$
This trace structure is called
{\it the standard trace structure} on $\mathcal F$.

By Example \ref{Tr_on_Et_Coh} for each integer $m$ the additive presheaf $H^m_{et}(-,\mathcal F)$ on $Sch'/V$ is equipped
with the trace structure induced by the standard trace structure on $\mathcal F$. It is called
{\it the standard trace structure} on $H^m_{et}(-,\mathcal F)$.
\end{exam}

\begin{exam}\label{Tr_on_TT_K_th}
Let $m$ be an integer.
Clearly, the Thomason-Throughbor $K_m$-groups is a presheaf with transfers on $Sch'/V$.
The Thomason-Throughbor $K_m$-groups with finite coefficients $\mathbb Z/r$ is also a presheaf with transfers on $Sch'/V$.
If $\mathcal A$ be an Azumaya algebra over $A$, then the Thomason-Throughbor $K_m$-groups with coefficients in $\mathcal A$
is also a presheaf with transfers on $Sch'/V$.
\end{exam}



\begin{defn}\label{dim_one_property}
Let $\mathcal F$ be a presheaf of abelian groups on $Sch'/V$. One says that $\mathcal F|_{Sm'/V}$ satisfies
$\text{dim 1 property}$, if for each data
$d\geq 1$, $X$, $X'$, $\eta$
as in Section \ref{agreements} and for $\mathcal S=\mathcal S'$ as in Notation \ref{inj_to_gen_point}
the map $\eta^*: \mathcal F(\mathcal S)=\mathcal F(\mathcal S')\to \mathcal F(\eta)$
is injective.
\end{defn}

\begin{defn}\label{coh_type}
Let $\mathcal F$ be a presheaf on $Sch'/V$. One says that $\mathcal F|_{Sm'/V}$
is of cohomological type \\
a) if for each scheme $S\in Sm'/V$
the map $p^*_{S}: \mathcal F(S)\to \mathcal F(\mathbb A^1_S)$ is an isomorphism;\\
b) if for each $S\in Sm'/V$ and each Zariski open cover $S_1\cup S_2=S$ the standard sequence is exact
$$\mathcal F(S)\to \mathcal F(S_1)\oplus \mathcal F(S_2)\to \mathcal F(S_1\cap S_2).$$
\end{defn}

\begin{defn}\label{cont_presh}(Continuity property)
One says that a presheaf $G$ on $Sch'/V$ is continuous
if for each $S\in Sch'/V$, each left filtering system $(S_j)_{j\in J}$
as in section \ref{agreements}
and each $V$-scheme isomorphism $S\to \lim_{j\in J} S_j$
the map
$$\text{colim}_{j\in J}G(S_j)\to G(S)$$ is an isomorphism.
Clearly, if a presheaf $G$ on $Sch'/V$ is continuous, then
$G|_{Sm'/V}$ is continuous.
\end{defn}



\begin{thm}\label{inj_main}{\rm
Let $\mathcal F$ be an additive continuous presheaf with transfers on $Sch'/V$. Suppose
$\mathcal F|_{Sm'/V}$ satisfies the $\text{dim 1 property}$
and is of cohomological type.
Then for each data $d\geq 1$, $X$, $x_1,x_2,\dots,x_n\in X$, $\mathcal O$, $U$, $\eta$
as in Section \ref{agreements} the map
$\eta^*: \mathcal F(U)\to \mathcal F(\eta)$ is injective.
}
\end{thm}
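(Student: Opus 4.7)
The plan is to run the argument of Theorem~\ref{cousin_BW} (the injectivity of $W(\mathcal O)\to W(\mathcal K)$), with the Euler trace of Balmer--Witt theory replaced by the universal trace structure that any presheaf with transfers carries. Fix $a\in \mathcal F(U)$ with $\eta^*(a)=0$. Using continuity, I would choose additional closed points $x_{n+1},\dots,x_N\in X$ so that every irreducible component of $X_v$ contains some $x_i$, and then use the Mayer--Vietoris part of the cohomological-type hypothesis to find an affine Zariski neighborhood $X'\subseteq X$ of $\{x_1,\dots,x_N\}$ satisfying the conventions of Section~\ref{agreements}, together with a lift $\tilde a\in \mathcal F(X')$ of $a$. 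The vanishing $\eta^*(a)=0$ forces $\tilde a|_\eta=0$, and the $\text{dim 1}$ property applied to the semi-local Dedekind ring $\mathcal O_{X',X'_v}$ from Notation~\ref{inj_to_gen_point} gives $\tilde a|_{\mathcal S}=0$. Continuity of $\mathcal F$ then produces a non-zero $f\in \Gamma(X',\mathcal O_{X'})$ with $\tilde a|_{X'_f}=0$, whose zero locus $Y=\{f=0\}$ contains no irreducible component of $X'_v$.

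Next I would feed the data $(X,\{x_1,\dots,x_N\},X',Y)$ into Corollary~\ref{Homotopy_w_Traces} to produce an affine neighborhood $U'\subseteq X'$ of the chosen points and a diagram
$$\Aff^1_{U'}\xleftarrow{\tau}\mathcal X\xrightarrow{p_X}X'$$
with $\tau$ finite surjective, a section $\Delta$ of $p=pr_{U'}\circ\tau$ satisfying $p_X\circ\Delta=can$, and divisors $\mathcal D_0=\Delta(U')\sqcup\mathcal D'_0$ and $\mathcal D_1$ both disjoint from $\mathcal Y:=p_X^{-1}(Y)$. Since $\mathcal X$ and $\Aff^1_{U'}$ are both regular of the same dimension and $\tau$ is finite and surjective, miracle flatness makes $\tau$ finite flat; the base changes $\mathcal D_0\to U'$ and $\mathcal D_1\to U'$ are therefore finite flat as well, so the trace maps associated to $\mathcal F$ are defined along $\tau$ and along each $\mathcal D_j\to U'$.

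Set $\alpha=p_X^*(\tilde a)\in \mathcal F(\mathcal X)$. Because $\tilde a|_{X'_f}=0$ and $\mathcal D_1,\mathcal D'_0\subseteq \mathcal X\setminus \mathcal Y=p_X^{-1}(X'_f)$, we have $\alpha|_{\mathcal D_1}=0$ and $\alpha|_{\mathcal D'_0}=0$. The homotopy-invariance clause of the cohomological-type hypothesis gives $i_0^*=i_1^*$ on $\mathcal F(\Aff^1_{U'})$; applying this equality to $\Tr_\tau(\alpha)$ and invoking the base change and additivity axioms of the trace structure yields
$$0=\Tr_{\mathcal D_1/U'}(\alpha|_{\mathcal D_1})=\Tr_{\Delta(U')/U'}(\alpha|_{\Delta(U')})+\Tr_{\mathcal D'_0/U'}(\alpha|_{\mathcal D'_0})$$
in $\mathcal F(U')$. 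The second summand vanishes, and normalization combined with $p_X\circ\Delta=can$ identifies the first with $\Delta^*(\alpha)=can^*(\tilde a)=\tilde a|_{U'}$. Hence $\tilde a|_{U'}=0$, so $a=\tilde a|_U=0$.

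The main obstacle I expect is the very first step: carefully combining continuity with Mayer--Vietoris so as to produce a single $\tilde a\in \mathcal F(X')$ on a common $X'$ that simultaneously contains all required closed points, contains every generic point of $X_v$, and contains no whole irreducible component of $X_v$, so that both the $\text{dim 1}$ property and Corollary~\ref{Homotopy_w_Traces} can be applied uniformly. Once this lift is in hand, the transfer calculation is a direct consequence of the formal properties of a presheaf with transfers, exactly as in the Witt-group argument of Theorem~\ref{cousin_BW}.
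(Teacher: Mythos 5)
Your proposal is correct and follows essentially the same route as the paper's own proof: lift $a$ to $\tilde a\in\mathcal F(X')$ via continuity and Mayer--Vietoris, kill it on $X'_f$ using the $\text{dim 1}$ property and Notation \ref{inj_to_gen_point}, then apply Corollary \ref{Homotopy_w_Traces} and conclude by the trace identity $\Tr_{\mathcal D_1/U'}(\alpha|_{\mathcal D_1})=\Tr_{\mathcal D'_0/U'}(\alpha|_{\mathcal D'_0})+\Delta^*(\alpha)$ coming from homotopy invariance, base change and additivity. Your explicit remark that $\tau$ is finite flat by miracle flatness (so the transfers of Definition \ref{Pre_w_Tr} apply) is a point the paper leaves implicit, and you correctly observe that the \'etaleness over $\{0\}\times U'$ and $\{1\}\times U'$ is not actually needed for this argument.
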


\begin{proof}[Proof of Theorem \ref{inj_main}]
Let $a\in \mathcal F(U)$ which vanishes in $\mathcal F(\eta)$.
The Mayer-Vietories property and the continuity of the presheaf $\mathcal F|_{Sm'/V}$ on $Sm'/V$
allows to find appropriate closed points $x_{n+1}, ...,x_N$ in $X$,
 a Zariski neighborhood $X'$ of all the points $x_i$'s
($i=1,2,...,N$) as in the section \ref{agreements}
and an element $\tilde a\in \mathcal F(X')$ such that
$a=\tilde a|_U$. Recall that
$X'$ contains
{\it all generic points}
of the scheme $X_v$,
but
{\it does not contain any irreducible component}
of the scheme $X_v$.


By the hypothesis of the theorem the element $\tilde a$ vanishes at the generic point $\eta$ of the scheme $X'$.
Since $\mathcal F$ is continuous and
$\mathcal F|_{Sm'/V}$ satisfies the
$\text{dim 1 property}$
it follows there is a non-zero $f\in \Gamma(X',\mathcal O_{X'})$ such that
the closed subset $Y:=\{f=0\}$ of $X'$
does not contain any irreducible component of the scheme $X'_v$
and the element $\tilde a|_{X'_f}$ vanishes.

Applying Corollary \ref{Homotopy_w_Traces} to the scheme
$X$, the points $x_1,x_2,\dots,x_N \in X$, the open $X'\subseteq X$ and the closed subset $Y\subset X'$,
we get
an affine Zariski open neighborhood $U'$ of points
$x_1,x_2,\dots,x_N \in X'$ and
a diagram of $A$-schemes and $A$-morphisms of the form
$$\Aff^1_{U'} \xleftarrow{\tau} \mathcal X \xrightarrow{p_X} X'$$
with an $A$-smooth irreducible schemes $\mathcal X$ and a finite surjective morphism $\tau$.
Moreover, if we write $p: \mathcal X\to U'$ for the composite map $pr_{U'}\circ \tau$, then these data
enjoy the following properties:\\
(1) there is a section $\Delta: U'\to \mathcal X$ of the morphism $p$ such that $\tau\circ \Delta=i_0$ and $p_X\circ \Delta=can$,
where $i_0$ is the zero section of $\Aff^1_{U'}$ and $can: U'\hookrightarrow X'$ is the canonical inclusion;\\
(2) the morphism $\tau$ is \'{e}tale as over $\{0\}\times U'$, so over $\{1\}\times U'$;\\
(3) for $\mathcal D_1:=\tau^{-1}(\{1\}\times U')$ and $\mathcal Y:=p^{-1}_X(Y)$ one has $\mathcal D_1\cap \mathcal Y=\emptyset$; \\
(4) for $\mathcal D_0:=\tau^{-1}(\{0\}\times U')$ one has $\mathcal D_0=\Delta(U')\sqcup \mathcal D'_0$ and $\mathcal D'_0\cap \mathcal Y=\emptyset$;\\

Consider a category $\text{Aff}$ of affine $\Aff^1_{U'}$-schemes and $\Aff^1_{U'}$-morphisms.
For a scheme $T\in \text{Aff}$ write $\mathcal T$ for $T\times_{\Aff^1_{U'}} \mathcal X$.
There are two interesting presheaves on $\text{Aff}$:
$$T\mapsto \mathcal F(\mathcal T) \ \text{and} \ T\mapsto \mathcal F(T).$$
The trace structure on the presheaf $\mathcal F$ defines a functor transformation
$$Tr: T\mapsto [Tr_{\mathcal T/T}:  \mathcal F(\mathcal T)\to \mathcal F(T)],$$
which enjoys properties (ii) and (iii) as in Definition \ref{Pre_w_Tr}.
Set $\alpha=p^*_X(\tilde a)\in \mathcal F(\mathcal X)$.
Using now the properties (2) and (3) and the homotopy invariance of the presheaf
$\mathcal F|_{Sm'/V}$
one gets an equality in $\mathcal F(U')$
$$Tr_{\mathcal D_1/U'}(\alpha|_{\mathcal D_1})=Tr_{\mathcal D'_0/U'}(\alpha|_{\mathcal D'_0})+
\Delta^*(\alpha).$$
Our choice of the element $f$ and the properties (3),(4) show that
$\alpha|_{\mathcal D_1}=0$ and $\alpha|_{\mathcal D'_0}=0$. Hence $\Delta^*(\alpha)=0$.
By the property (1) one has $p_X\circ \Delta=can$.
Thus, $\tilde a|_{U'}=\Delta^*(\alpha)=0$. Hence $0=(\tilde a|_{U'})|_U=\tilde a|_U=a$ in $\mathcal F(U)$.
The injectivity of the map $\eta^*: \mathcal F(U)\to \mathcal F(\eta)$ is proved.
Theorem \ref{inj_main} is proved.
\end{proof}

\section{Proof of Theorems \ref{cousin_K_w_Z_rZ}, \ref{cousin_H_et_loc_const_F}, \ref{cousin_H_et_mu} and \ref{cousin_K2}.}
Here are some examples of presheaves on $Sch'/V$ which are continuous.
\begin{exam}\label{cont_of_K_th}
For each integer $m$ the presheaves $K^m(-)$ and $K^m(-,\mathbb Z/r)$ on $Sch'/V$ satisfy the continuity property
as in Definition \ref{cont_presh}.
\end{exam}

\begin{exam}\label{cont_of_et_coh}
Let $\cal F$ be a locally constant $r$-torsion
sheaf on the big \'{e}tale site $(Sch'/V)_{et}$.
Then for each integer $m$ the presheaf $H^m_{et}(-,\cal F)$ on $Sch'/V$ satisfies the the continuity property.
\end{exam}

\begin{lem}\label{dim_one_K}
Let $\mathcal S$ be semi-local irreducible Dedekind scheme $s_1,...,s_l\in \mathcal S$ be all its closed points.
Suppose that all the residue fields $k(s_j)$ have characteristic $p$ and the general point
$\eta:=\mathcal S-\{s_1,...,s_l\}$ has characteristic zero.
Let $r$ be coprime to $p$.
Then for each integer $m$ the map $K_m(\mathcal S,\mathbb Z/r)\to K_m(\eta,\mathbb Z/r)$ is injective.
\end{lem}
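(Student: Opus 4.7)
My plan is to use the Quillen--Thomason--Trobaugh localization sequence in $K$-theory with $\mathbb{Z}/r$-coefficients together with Gabber's rigidity theorem. Since each residue field $k(s_j)$ has characteristic $p$ and the generic point $\eta$ has characteristic zero, the uniformizer of each local ring $\mathcal{O}_{\mathcal{S},s_j}$ divides $p$; as $r$ is coprime to $p$, it is a unit in each $\mathcal{O}_{\mathcal{S},s_j}$, hence in $\mathcal{S}$. The localization sequence for the closed embedding $\{s_1,\dots,s_l\}\hookrightarrow\mathcal{S}$ with open complement $\eta$ reads
$$\bigoplus_{j=1}^{l}K_m(k(s_j),\mathbb{Z}/r)\xrightarrow{i_*}K_m(\mathcal{S},\mathbb{Z}/r)\xrightarrow{j^*}K_m(\eta,\mathbb{Z}/r)\xrightarrow{\partial}\bigoplus_{j=1}^{l}K_{m-1}(k(s_j),\mathbb{Z}/r),$$
and reduces the injectivity of $j^*$ to the vanishing of the pushforward $i_*$. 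Equivalently, shifting the localization sequence up by one degree, it suffices to show that the boundary map $\partial: K_{m+1}(\eta,\mathbb{Z}/r)\to\bigoplus_j K_m(k(s_j),\mathbb{Z}/r)$ is surjective.

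The central input is Gabber's rigidity theorem: for a Henselian local ring $(\mathcal{O}^h,k)$ in which $r$ is invertible, the pullback $K_m(\mathcal{O}^h,\mathbb{Z}/r)\xrightarrow{\cong}K_m(k,\mathbb{Z}/r)$ is an isomorphism. Applied to the closed immersion $\mathrm{Spec}\,k(s_j)\hookrightarrow\mathrm{Spec}\,\mathcal{O}^h_{\mathcal{S},s_j}$, the self-intersection formula gives $i^{h,*}\circ i^h_* = \mathrm{mult}(1-[N^\vee])$; this vanishes because the conormal line bundle $N^\vee=m_{s_j}/m_{s_j}^2$ of a point in a DVR is trivial of rank one, so $[N^\vee]=1$ in $K_0$. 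Combined with $i^{h,*}$ being the rigidity isomorphism, this forces the Henselian pushforward $i^h_*=0$, and hence the Henselian boundary $\partial^h: K_{m+1}(\mathrm{Frac}\,\mathcal{O}^h_{\mathcal{S},s_j},\mathbb{Z}/r)\to K_m(k(s_j),\mathbb{Z}/r)$ is surjective. Therefore, any prescribed $\alpha_j\in K_m(k(s_j),\mathbb{Z}/r)$ admits a Henselian preimage $\beta_j^h$.

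The main obstacle is descending these Henselian preimages to $K_{m+1}(\mathcal{K},\mathbb{Z}/r)$ in a way that yields the prescribed residues at all closed points simultaneously. My plan is to fix $j$ and construct $\beta_j\in K_{m+1}(\mathcal{K},\mathbb{Z}/r)$ whose $\partial_{s_j}$-residue is $\alpha_j$ and whose $\partial_{s_i}$-residue vanishes for $i\ne j$; the sum $\sum_j\beta_j$ will then be the required global preimage. Since $\mathcal{O}^h_{\mathcal{S},s_j}$ is a filtered colimit of pointed étale neighborhoods of $s_j$ in $\mathcal{S}$ and $K$-theory is continuous, the Henselian preimage $\beta_j^h$ descends to an element over some finite-level étale neighborhood $\mathcal{O}_\lambda$. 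Prime avoidance in the semi-local Dedekind $\mathcal{S}$ yields $g_j\in\mathcal{S}$ with $g_j\in m_{s_i}$ for $i\ne j$ and $g_j\notin m_{s_j}$, making $\mathrm{Spec}\,\mathcal{S}[1/g_j]=\mathrm{Spec}\,\mathcal{O}_{\mathcal{S},s_j}$ a Zariski open neighborhood of $s_j$ alone with fraction field $\mathcal{K}$; arranging $\mathcal{O}_\lambda$ to sit above this open and applying the finite étale transfer to descend to $\mathcal{K}$ (handling the degree factor by choosing neighborhoods of degree coprime to $r$, or by invoking Nisnevich descent for mod-$r$ $K$-theory) produces the desired $\beta_j$. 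The vanishing $\partial_{s_i}(\beta_j)=0$ for $i\ne j$ is automatic since $\beta_j$ is supported on a scheme avoiding these points.
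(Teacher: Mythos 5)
Your reduction (injectivity of $K_m(\mathcal S,\mathbb Z/r)\to K_m(\eta,\mathbb Z/r)$ is equivalent to vanishing of $i_*$, i.e.\ to surjectivity of $\partial\colon K_{m+1}(\eta,\mathbb Z/r)\to\oplus_j K_m(k(s_j),\mathbb Z/r)$) and your rigidity input are in the same spirit as the paper, which also starts from $i_*=0$ and invokes Suslin/Gabber rigidity to lift the class to an \'etale neighborhood. Your Henselian step is essentially sound: $i^{h,*}\circ i^h_*$ is cup product with $1-[m/m^2]=0$, and since rigidity makes $i^{h,*}$ an isomorphism, $i^h_*=0$ and $\partial^h$ is surjective.

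The genuine gap is the descent from the (finite level of the) Henselization back to $\mathcal K$. Once you descend $\beta_j^h$ to an \'etale neighborhood, it lives over a proper finite separable extension $L_\lambda$ of $\mathcal K$, and the only way down is a transfer; but then $\partial_{s_i}(N_{L_\lambda/\mathcal K}\beta_\lambda)=\sum_{w\mid s_i}\mathrm{cor}_{k(w)/k(s_i)}\partial_w(\beta_\lambda)$ picks up uncontrolled contributions from all the non-distinguished points of the cover over \emph{every} $s_i$, not just a ``degree factor''; choosing the neighborhood ``of degree coprime to $r$'' is neither possible in general nor relevant, and Nisnevich excision only identifies $K$-theory with supports, which gives the comparison maps in the wrong direction and does not produce a class over $\eta$ itself. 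Moreover your final claim that $\partial_{s_i}(\beta_j)=0$ for $i\neq j$ is ``automatic since $\beta_j$ is supported on a scheme avoiding these points'' is false: a class in $K_{m+1}(\mathcal K,\mathbb Z/r)$ constructed over the local ring $\mathcal O_{\mathcal S,s_j}$ still has well-defined and possibly nonzero residues at the other closed points, so hitting each summand separately is exactly what remains to be proved (and hitting each summand only up to junk in the others does not give surjectivity onto the direct sum). The paper closes precisely this gap: after factoring the \'etale neighborhood as an open subscheme of a finite cover $\Pi\colon\tilde W\to W$, it cups the rigidity lift $\tilde a$ with a function $f$ that is a uniformizer at the distinguished point and $\equiv 1$ at all other points of $\tilde W$ over the closed fibre, so that $\partial(f\cup\tilde a)$ is concentrated at the distinguished point (the residue $\{1\}$ kills the other terms via the $K_*(\mathcal O)$-module structure), and then, instead of norming the generic class down to $\mathcal K$, it pushes the boundary forward along $\Pi$ and uses $\tilde{\underline i}_*\circ\partial=0$ in the localization sequence of $\tilde W$ to conclude $i_*(a)=0$. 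Some such mechanism for killing the unwanted residues (or for avoiding the transfer altogether, as the paper does) is missing from your argument.
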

\begin{proof}(of the lemma) Let $i_j: w_j\hra W$ is the closed embedding.
It is sufficient to check that for each $j=1,...,l$ the direct image map
$(i_j)_*: K_m(w_j,\mathbb Z/r)\to K_m(W,\mathbb Z/r)$ is zero.
We may assume that $j=1$ and write $w$ for $w_1$ and $i$ for $i_1$.
So, it is sufficient to check that the map
$i_*: K_m(w,\mathbb Z/r)\to K_m(W,\mathbb Z/r)$ is zero.
Let $a\in K_m(w,\mathbb Z/r)$. We must check that $i_*(a)=0$ in $K_m(W,\mathbb Z/r)$.
By the Suslin rigidity theorem there is an etale neibourhood
$(W\xleftarrow{Pi^{\circ}} W^{\circ}, s: w\to W^{\circ})$ of the point $w$
and an element $a^{\circ} \in K_m(W^{\circ},\mathbb Z/r)$ such that $s^*(a^{\circ})=a$. Write
$\Pi^{\circ}$ as $\Pi\circ in$, where $in: W^{\circ}\hra \tilde W$ is an open embedding, $\Pi: \tilde W\to W$
is a finite morphism and $\tilde W$ is a semi-local irreducible Dedekind scheme.
Let $\tilde \eta\in \tilde W$ be the general point of $\tilde W$.
Then $\tilde \eta=\tilde W-\Pi^{-1}(\underline w)$, where $\underline w=\{w_1,...,w_l\}$.
Clearly, $\tilde \eta$ is in $W^{\circ}$. Put $\tilde a:=a^{\circ}|_{\tilde \eta}$.
Let $\partial: K_{m+1}(\tilde \eta,\mathbb Z/r)\to K_m(\Pi^{-1}(\underline w);\mathbb Z/r)$
be the boundary map.

Choose a function $f\in \Gamma(\tilde W-\{s(w)\},\mathcal O^{\times})$ such that
$f|_{\Pi^{-1}(\underline w)-s(w)}\equiv 1$ and $div_{s(w)}(f)=1$. In this case
$\partial_{s(w)}(f\cup \tilde a)=a\in K_m(s(w),\mathbb Z/r)$ and
for each $\tilde w\in \Pi^{-1}(\underline w)-s(w)$ one has
$\partial_{\tilde w}(f\cup \tilde a)=0 \in K_m(\tilde w,\mathbb Z/r)$.

Let $\underline i: \underline w \hra W$ and $\tilde {\underline i}: \Pi^{-1}(\underline w)\hra \tilde W$ be the closed embeddings and
$\pi=\Pi|_{\Pi^{-1}(\underline w)}: \Pi^{-1}(\underline w)\to \underline w$.
The above computation with the element
$f\cup \tilde a\in K_{m+1}(\tilde \eta;\mathbb Z/r)$
shows that
$(\underline i\circ \pi)_*(\partial (\tilde a))=i_*(a)$.
Since $\Pi\circ \tilde {\underline i}=\underline i\circ \pi: \Pi^{-1}(\underline w)\to W$
and $\tilde {\underline i}_*\circ \partial=0$ it follows that
$0=i_*(a)$. This proves the lemma.
\end{proof}

\begin{proof}[Proof of Theorem \ref{cousin_K_w_Z_rZ}]
Let $m$ be an integer.
By Examples \ref{Tr_on_TT_K_th} and \ref{cont_of_K_th} the presheaf $K^m(-;\mathbb Z/r)$
is an additive continuous presheaf with transfers on $Sch'/V$.
By Lemma \ref{dim_one_K} the presheaf $K^m(-;\mathbb Z/r)|_{Sm'/V}$ on $Sm'/V$
satisfies the $\text{dim 1 property}$.
The Thomason-Throughbor $K$-theory
$(K^*(-;\mathbb Z/r),\partial)$
with finite coefficients $\mathbb Z/r$
is a cohomology theory on $Sm'Op/V$ as explained just above
Theorem \ref{cousin_K_w_Z_rZ}.
Thus, the presheaf $K^m(-;\mathbb Z/r)|_{Sm'/V}$ is of cohomological type
in the sense of Definition \ref{coh_type}.
Thus, by Theorem \ref{inj_main}
the map
$\eta^*: K^m(U;\mathbb Z/r)\to K^m(\eta;\mathbb Z/r)$
is injective. Now Theorem
\ref{cousin_1} completes the proof.
\end{proof}

\begin{lem}\cite{CHK}\label{dim_one_Et_Coh}
Let $\cal F$ be a a locally constant $r$-torsion
sheaf on the big \'{e}tale site \'{E}t/V.
Let $\mathcal S$, $s_1,...,s_l\in \mathcal S$, $\eta$ be such as in Lemma \ref{dim_one_K}.
Then for each integer $m$ the map $H^m_{et}(\mathcal S,\mathcal {F})\to H^m_{et}(\eta,\cal F)$ is injective.
\end{lem}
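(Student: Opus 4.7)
The plan is to follow the strategy of the proof of Lemma \ref{dim_one_K}, replacing Suslin's rigidity for $K$-theory with finite coefficients by the classical rigidity theorem for \'{e}tale cohomology of henselian pairs (SGA~4, XII), and replacing the $K$-theoretic boundary with the residue map in \'{e}tale cohomology provided by Gabber's absolute purity.

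First I would use the long exact sequence of the pair $(\mathcal S, \eta)$ to reduce injectivity of $\eta^{*}$ to the vanishing of the Gysin maps $(i_j)_*\colon H^m_{s_j}(\mathcal S, \mathcal F)\to H^m_{et}(\mathcal S, \mathcal F)$ for each closed point $s_j$; excision lets me treat one point $s = s_j$ at a time. Since $r$ is invertible in a neighborhood of $s$ and $\mathcal S$ is regular of dimension one, Gabber's absolute purity identifies
$$H^m_s(\mathcal S, \mathcal F) \;\cong\; H^{m-2}_{et}(s, \mathcal F(-1)),$$
and the rigidity theorem lifts any class $a\in H^{m-2}_{et}(s, \mathcal F(-1))$ to a class $a^{\circ}\in H^{m-2}_{et}(W^{\circ}, \mathcal F(-1))$ on some \'{e}tale neighborhood $(\Pi^{\circ}\colon W^{\circ}\to \mathcal S,\; s\colon s\to W^{\circ})$.

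Next I would factor $\Pi^{\circ}=\Pi\circ in$ with $in\colon W^{\circ}\hookrightarrow \tilde W$ open and $\Pi\colon \tilde W\to \mathcal S$ finite, where $\tilde W$ is a semi-local irreducible Dedekind scheme. Restricting $a^{\circ}$ to the generic point $\tilde\eta$ gives a class $\tilde a\in H^{m-2}_{et}(\tilde\eta, \mathcal F(-1))$. I would choose $f\in \Gamma(\tilde W - \{s(s)\}, \mathcal O^{\times})$ equal to $1$ on the other closed points of $\tilde W$ above $\underline s = \{s_1,\dots,s_l\}$ and with $\mathrm{ord}_{s(s)}(f)=1$, and cup its Kummer class $[f]\in H^1(\tilde\eta, \mu_r)$ with $\tilde a$ to obtain $[f]\cup \tilde a\in H^{m-1}_{et}(\tilde\eta, \mathcal F)$ (using the canonical identification $\mathcal F(-1)\otimes \mu_r \cong \mathcal F$). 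The Tate-type formula for residues of Kummer cup products then yields $\partial_{s(s)}([f]\cup\tilde a)=a$ and $\partial_{\tilde w}([f]\cup\tilde a)=0$ at all other closed points of $\tilde W$ over $\underline s$. Finally, the vanishing of the composition $\tilde{\underline i}_* \circ \partial$ (the first piece of exactness in the Bloch--Ogus Gersten complex of $\tilde W$) together with the identity $\Pi_*\circ \tilde{\underline i}_* = \underline i_* \circ \pi_*$ forces $i_*(a) = 0$, as desired.

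The hard parts will be (i) verifying the Tate residue formula for cup products with Kummer symbols in \'{e}tale cohomology with coefficients in the locally constant sheaf $\mathcal F$ (the Tate twist on $\mathcal F$ must be interpreted via the canonical pairing with $\mu_r$ since $\mathcal F$ is only locally constant rather than a constant twist of $\mu_r^{\otimes s}$), and (ii) confirming compatibility of the finite pushforward $\Pi_*$ with the residue map. Both are standard but delicate. Alternatively, as the lemma is attributed in its statement to Colliot-Th\'{e}l\`{e}ne--Hoobler--Kahn, one can simply invoke the corresponding result in \cite{CHK}, where the same injectivity is proved for semi-local Dedekind domains in mixed characteristic and locally constant $r$-torsion \'{e}tale sheaves with $r$ invertible in the residue characteristic.
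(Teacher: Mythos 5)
Your proposal is correct, and the comparison with the paper is somewhat lopsided: the paper does not prove this lemma at all --- it is stated with the attribution \cite{CHK} and used as a black box --- so your closing fallback (quote the semi-local Dedekind injectivity for locally constant $r$-torsion sheaves from \cite{CHK}) is literally the paper's route. Your main sketch is a genuine self-contained alternative obtained by transposing the paper's own proof of Lemma \ref{dim_one_K} to \'{e}tale cohomology: Suslin rigidity is replaced by henselian invariance of \'{e}tale cohomology with torsion coefficients combined with continuity along \'{e}tale neighborhoods of $s$, and the $K$-theoretic boundary by codimension-one purity (classical in dimension one, so the full strength of Gabber's absolute purity is not needed). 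The skeleton then goes through exactly as in Lemma \ref{dim_one_K}: the localization sequence reduces the claim to killing the maps $H^m_{s_j}(\mathcal S,\mathcal F)\to H^m_{et}(\mathcal S,\mathcal F)$; the factorization $\Pi^{\circ}=\Pi\circ in$ with $\tilde W$ the finite, flat, semi-local Dedekind normalization of $\mathcal S$ in $W^{\circ}$ exists as in the paper; your $f$ exists by the Chinese remainder theorem in the semi-local principal ideal domain $\Gamma(\tilde W,\mathcal O_{\tilde W})$; and the two compatibilities you flag are indeed the only real content. For the residue step note that at the points $\tilde w\neq s(s)$ one uses $\partial_{\tilde w}([f]\cup\tilde a)=\pm\,[\bar f]\cup\partial_{\tilde w}(\tilde a)$, so it is essential (and you correctly impose) that $f\equiv 1$ there rather than merely being a unit, while in the pushforward step one uses $k(s(s))=k(s)$ so that the relevant corestriction is the identity; the transfer for the finite flat $\Pi$ on cohomology with supports, its compatibility with purity, corestriction and the boundary, and the Kummer residue formula with coefficients handled via $\mu_r\otimes\mathcal F(-1)\cong\mathcal F$ are all available in the formalism of \cite{CHK} itself (if $\mathcal F$ has infinite stalks, write it first as a filtered colimit of locally constant constructible subsheaves). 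What your route buys over the paper's bare citation is an explicit proof in precisely the mixed-characteristic situation needed, at the cost of re-verifying these standard compatibilities; what the paper's route buys is brevity.
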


\begin{proof}[Proof of Theorem \ref{cousin_H_et_loc_const_F}]
Let $m$ be an integer. By Examples \ref{Loc_Const_is_w_Tr} and \ref{cont_of_et_coh}
the presheaf $H^m_{et}(-,\mathcal F)$
is an additive continuous presheaf with transfers on $Sch'/V$.
By Lemma \ref{dim_one_Et_Coh} the presheaf $H^m_{et}(-,\mathcal F)|_{Sm'/V}$ on $Sm'/V$
satisfies the $\text{dim 1 property}$.
The \'{e}tale cohomology theory $(H^*_{et}(-, \mathcal F),\partial)$ on $Sm'Op/V$
is a cohomology theory on $Sm'Op/V$. Thus, the presheaf $H^m_{et}(-,\mathcal F)$
is of cohomological type in the sense of Definition \ref{coh_type}.
Thus, by Theorem \ref{inj_main}
the map
$\eta^*: H^m_{et}(U,\mathcal F)\to H^m_{et}(\eta,\mathcal F)$ is injective.
Theorem
\ref{cousin_1} completes the proof.
\end{proof}

\begin{proof}[Proof of Theorem \ref{cousin_H_et_mu}]
This theorem is just a particular case of Theorem
\ref{cousin_H_et_loc_const_F}.
\end{proof}

\begin{lem}\label{dim_one_K_th}
Let $W$ be semi-local irreducible Dedekind scheme. Let $\eta\in W$ be its general point.
Then the maps $K_2(W)\to K_2(\eta)$ is injective.
\end{lem}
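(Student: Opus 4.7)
The plan is to run the argument of Lemma \ref{dim_one_K} in parallel, but to substitute Suslin rigidity (which is unavailable without finite coefficients) by an explicit lifting of Steinberg symbols supplied by Matsumoto's theorem. Let $A=\Gamma(W,\mathcal O_W)$, let $s_1,\ldots,s_l$ be the closed points of $W$ with corresponding maximal ideals $m_j\subset A$, and invoke Quillen's localization sequence for the open immersion $\eta\hookrightarrow W$:
$$K_3(\eta)\xrightarrow{\partial}\bigoplus_{j=1}^{l}K_2(k(s_j))\xrightarrow{i_*}K_2(W)\xrightarrow{\eta^*}K_2(\eta).$$
By exactness, injectivity of $\eta^*$ is equivalent to the vanishing of each pushforward $i_{w,*}:K_2(k(w))\to K_2(W)$, so I would fix $w=s_j$ and aim to prove $i_{w,*}=0$.

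Given $a\in K_2(k(w))$, I would use Matsumoto's theorem to write $a=\sum_i\{u_i,v_i\}$ with $u_i,v_i\in k(w)^{\times}$. The Chinese Remainder Theorem applied to the semi-local ring $A$ then allows me to lift each $u_i,v_i$ to units $\tilde u_i,\tilde v_i\in A^{\times}$ that reduce to $u_i,v_i$ modulo $m_j$ and to $1$ modulo every $m_k$ with $k\neq j$. The Loday product on Quillen $K$-theory produces
$$\tilde a:=\sum_i\{\tilde u_i,\tilde v_i\}\in K_2(A),$$
a class whose reduction at $w$ equals $a$ and whose reduction at every other closed point is $0$, since $\{1,1\}=0$.

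Next, by another application of CRT, I would choose a uniformizer $\pi\in A$ at $w$ with $\pi\equiv 1\pmod{m_k}$ for all $k\neq j$, and form $\alpha:=\{\pi\}\cup\tilde a|_{\eta}\in K_3(\eta)$. The Leibniz rule for the tame symbol then gives $\partial_w(\alpha)=\tilde a|_w=a$ (because $v_w(\pi)=1$ while $\tilde a|_{\eta}$ lies in the image of $K_2$ of the localization of $A$ at $m_j$ and hence has trivial tame symbol at $w$), and $\partial_{s_k}(\alpha)=0$ for $k\neq j$ (since in that case both $\{\pi\}$ and $\tilde a|_{\eta}$ come from $K$-theory of the localization of $A$ at $m_k$, on which $\partial_{s_k}$ vanishes). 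Consequently $\partial(\alpha)\in\bigoplus_k K_2(k(s_k))$ has component $a$ at $w$ and $0$ elsewhere, and exactness of the localization sequence forces $i_{w,*}(a)=0$, completing the proof.

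The main technical hurdle I anticipate is the tame-symbol Leibniz computation in Quillen $K$-theory: one needs the standard facts that classes coming from $K_2$ of a discrete valuation ring have vanishing tame symbol and that cupping a uniformizer with such a class recovers the residue-field specialisation. Both facts are formal consequences of the localization triangle, but they are the only non-trivial inputs beyond Matsumoto's theorem and CRT for the semi-local ring $A$; the rest of the argument is purely formal.
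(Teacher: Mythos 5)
Your proposal is correct and follows essentially the same route as the paper: both arguments reduce injectivity of $K_2(W)\to K_2(\eta)$ via the localization sequence to showing that classes in $K_2$ of the residue fields are hit by the boundary $\partial\colon K_3(\eta)\to\oplus_j K_2(k(s_j))$, by lifting symbols to $K_2(W)$ and cupping with a function of valuation one. The only difference is cosmetic: you treat one closed point at a time with an explicit CRT lifting of symbols and of a uniformizer (which in fact makes precise the paper's appeal to Matsumoto for the surjectivity of $K_2(W)\to K_2(\underline w)$), whereas the paper lifts the class at all closed points simultaneously and cups with a single rational function having valuation $\pm 1$ at every closed point.
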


\begin{proof}
Let $w_1,...,w_l\in W$ are all its closed points and $\underbar w=\{w_1,...,w_l\}$
be the closed subscheme of $W$ and $i: \underbar w\hra W$ be the closed embedding.
It is sufficient to prove that the boundary map
$\partial: K_3(\eta)\to K_2(\underbar w)$ is surjective.

Let $a\in K_2(\underbar w)$ be an element. It is sufficient to find an element
$\alpha \in K_3(\eta)$ such that $\partial(\alpha)=a$. Recall that
by Matzumoto theorem the pull-back map $i^*: K_2(W)\to K_2(\underbar w)$ is surjective.
Thus, there exists an $\tilde a\in K_2(W)$ with $i^*(\tilde a)=a$.
There is a rational function $f$ on $W$ such that for each point $w\in \underbar w$ one has $div_{w}(f)=-1$.
Take $\tilde a\cup (f)\in K_3(\eta)$ as $\alpha$. Then $\partial(\alpha)=a$.
\end{proof}

\begin{proof}[Proof of Theorem \ref{cousin_K2}]
Let $m$ be an integer.
By Example \ref{Tr_on_TT_K_th} and \ref{cont_of_K_th} the presheaf $K^{-2}(-)$ is
an additive continuous presheaf with transfers on
$Sch'/V$.
By Lemma \ref{dim_one_K_th} the presheaf $K^{-2}(-)|_{Sm'/V}$ on $Sm'/V$
satisfies the $\text{dim 1 property}$.
The Thomason-Throughbor $K$-theory
$(K^*(-),\partial)$
is a cohomology theory on $Sm'Op/V$ as explained just above
Theorem \ref{cousin_K_w_Z_rZ}.
Thus, the presheaf $K^{-2}(-)$ is of cohomological type
in the sense of Definition \ref{coh_type}.
Thus, by Theorem \ref{inj_main}
the map $\eta^*: K_2(U)\to K_2(\eta)$ is injective.
The map $\eta^*: K_1(U)\to K_1(\eta)$ is injective also.
Now Theorem
\ref{cousin_1} proves exactness of the complex
\eqref{thm:Gersten_is_Exact}.

It remains to prove the exactness of the complex
\eqref{thm:Gersten_mod_n_is_Exact}.
To do this recall that for each scheme $S\in Sch'/V$ and each integer $n$ there is the coefficient short exact sequence
$0\to K_n(S)/r \to K_n(S,\mathbb Z/r)\to {_{r}}K_{n-1}(S)\to 0$. For an integer $n$ let
$(\ref{thm:Gersten_w_Z/r})_n$ be the complex \eqref{thm:Gersten_w_Z/r} with $m=n$.
Consider a complex
\begin{equation}\label{mu_and_mu}
0\to {_{r}}K_1(U)\cong {_{r}}K_1(\eta) \to 0\to 0\to 0
\end{equation}
and the coefficient short exact sequence
$0\to (\ref{thm:Gersten_mod_n_is_Exact}) \to (\ref{thm:Gersten_w_Z/r})_2 \to \text{\eqref{mu_and_mu}} \to 0$
of complexes. By Theorem \ref{cousin_K_w_Z_rZ} the complex $(\ref{thm:Gersten_w_Z/r})_2$ is exact. Clearly,
the complex \eqref{mu_and_mu} is exact. Thus, the complex \eqref{thm:Gersten_mod_n_is_Exact} is also exact.
The theorem is proved.
\end{proof}

\section{Suslin's exact sequence in mixed characteristic}
{\it Our next aim is to get a version of Suslin's short exact sequence in mixed characteristic}.
Let $A$, $p>0$, $d\geq 1$, $X$, $x_1,x_2,\dots,x_n\in X$, $\mathcal O$, $U$, $\mathcal K$  be as in Section \ref{agreements}.
Let $r$ be an integer coprime to $p$.
\begin{proof}[Proof of Theorem \ref{thm:Suslin_exact}]
Consider a commutative diagram
\begin{equation}
\label{Big_Diagram}
    \xymatrix{
0 \ar[r] & _{r}K_2(\mathcal K) \ar[rr]^{\partial} \ar[d]_{}&& \oplus_{x\in X^{(1)}}\mu_r(k(x)) \ar[d] \ar[r] & 0 \ar[d] \ar[r] & 0  \\
0 \ar[r] & K_2(\mathcal K) \ar[rr]^{\partial} \ar[d]_{\times r} && \oplus_{x\in X^{(1)}} K_1(k(x)) \ar[r]^{\partial} \ar[d]_{\times r} & \oplus_{y\in X^{(2)}}K_0(k(y)) \ar[d]_{\times r}
\ar[r] & 0 \\
0 \ar[r] & K_2(\mathcal K) \ar[rr]^{\partial} \ar[d]_{\alpha} && \oplus_{x\in X^{(1)}} K_1(k(x)) \ar[r]^{\partial} \ar[d]_{\alpha} & \oplus_{y\in X^{(2)}}K_0(k(y)) \ar[d]_{\alpha}
\ar[r] & 0 \\
0 \ar[r] & K_2(\mathcal K)/r \ar[rr]^{\partial} \ar[d]_{} && \oplus_{x\in X^{(1)}} K_1(k(x))/r \ar[r]^{\partial} \ar[d]_{} & \oplus_{y\in X^{(2)}}K_0(k(y))/r \ar[d]_{}
\ar[r] & 0 \\
  & 0                                     &&   0                                                   & 0, \\
}
\end{equation}
where $\alpha$ are the reduction modulo $r$ homomorphisms.
Theorem \ref{cousin_K2} shows that the second, the third and the forth horizontal complexes
compute Zarisky cohomology $H^*_{Zar}(X,\underline K_2)$, $H^*_{Zar}(X,\underline K_2)$ and
$H^*_{Zar}(X,\underline K_2/r)$ respectively.
The morphism $\alpha$ between the third and the forth row of the diagram induces
a homomorphism
$\alpha: H^1_{Zar}(X,\underline K_2)/r \to H^1_{Zar}(X,\underline K_2/r)$.
Define a morphism
$\beta: H^1_{Zar}(X,\underline K_2/r) \to {_{r}}H^2_{Zar}(X,\underline K_2)$
as follows: for an element
$a\in \oplus_{x\in X^{(1)}} K_1(k(x))/r$ with $\partial(a)=0$
choose an element
$\tilde a\in \oplus_{x\in X^{(1)}} K_1(k(x))$
such that $\alpha (\tilde a)=a$.
Then $\alpha (\partial (\tilde a))=0$.
Thus there exists a unique element $b\in \oplus_{y\in X^{(2)}}K_0(k(y))$.
Let $\bar b$ be the class of $b$ modulo $Im(\partial)$.
Put $\beta(a)=\bar b$.
Clearly, $\bar b$ is in ${_{r}}H^2_{Zar}(X,\underline K_2)$.
Also, $\beta\circ \alpha=0$. Thus, we get a complex of the form
\begin{equation}
\label{Suslin_sequence}
0\to H^1_{Zar}(X,\underline K_2)/r \xrightarrow{\alpha} H^1_{Zar}(X,\underline K_2/r) \xrightarrow{\beta} {_{r}}H^2_{Zar}(X,\underline K_2)\to 0,
\end{equation}

\begin{lem}\label{prop:Suslin_exact}
The sequence \eqref{Suslin_sequence} is short exact.
\end{lem}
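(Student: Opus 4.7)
The plan is to identify \eqref{Suslin_sequence} as the Bockstein short exact sequence attached to the four-term sequence of complexes already encoded in diagram \eqref{Big_Diagram}. Let $C^\bullet$ denote the middle complex (rows two and three), namely
$[K_2(\mathcal K)\to\oplus_{x\in X^{(1)}}K_1(k(x))\to\oplus_{y\in X^{(2)}}K_0(k(y))]$, and let $\bar C^\bullet=C^\bullet/r$ be the fourth row. By Theorem \ref{cousin_K2} together with Corollary \ref{E_m_sheaf}, one has $H^p(C^\bullet)=H^p_{Zar}(X,\underline K_2)$ and $H^p(\bar C^\bullet)=H^p_{Zar}(X,\underline K_2/r)$. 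The top row is precisely the $r$-torsion subcomplex $C^\bullet[r]$, since ${_r}K_1(k(x))=\mu_r(k(x))$ and $K_0(k(y))=\mathbb Z$ is torsion-free. The columns of \eqref{Big_Diagram} thus assemble into an exact sequence of complexes
$0\to C^\bullet[r]\to C^\bullet\xrightarrow{\times r}C^\bullet\xrightarrow{\alpha}\bar C^\bullet\to 0$.

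Given that setup I would carry out the three exactness checks by a standard diagram chase. For injectivity of the induced map $\alpha\colon H^1(C^\bullet)/r\to H^1(\bar C^\bullet)$: given a cycle $\tilde a\in\oplus_x K_1(k(x))$ with $\alpha(\tilde a)=\partial(b)$ in $\oplus_x K_1/r$, lift $b$ to $\tilde b\in K_2(\mathcal K)$ and write $\tilde a-\partial(\tilde b)=rc$; applying $\partial$ and using $\partial(\tilde a)=0$ yields $r\partial(c)=0$, and torsion-freeness of $\oplus_y K_0(k(y))=\oplus_y\mathbb Z$ forces $\partial(c)=0$, so $[\tilde a]=r[c]$ in $H^1_{Zar}(X,\underline K_2)$. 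For exactness at $H^1_{Zar}(X,\underline K_2/r)$: the equality $\beta\circ\alpha=0$ is built into the construction of $\beta$, and conversely a class with $\beta=0$ lifts, after subtracting a suitable $rc$, to a cycle in $C^\bullet$ whose mod-$r$ reduction recovers it. For surjectivity of $\beta$ onto ${_{r}}H^2_{Zar}(X,\underline K_2)$: any $r$-torsion class is represented by $b\in\oplus_y K_0(k(y))$ with $rb=\partial(\tilde a)$, and then $\alpha(\tilde a)$ is a cycle in $\bar C^\bullet$ whose $\beta$-image is the given class.

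The main structural input, beyond Theorem \ref{cousin_K2} and Corollary \ref{E_m_sheaf} which identify the cohomology groups of $C^\bullet$ and $\bar C^\bullet$ with Zariski cohomology, is the torsion-freeness of the last nonzero term $\oplus_y K_0(k(y))=\oplus_y\mathbb Z$ of the Cousin complex: without it, the division-by-$r$ step in the injectivity argument collapses and one would need a correction term coming from ${_r}K_0$. Everything else is a textbook derivation of the universal-coefficient short exact sequence from a four-term exact sequence of complexes, so I do not expect a genuine obstacle; the only verification requiring care is that the explicit $\beta$ defined via diagram chase in the proof of the theorem coincides with the connecting homomorphism produced by the four-term sequence, which follows by unwinding both definitions.
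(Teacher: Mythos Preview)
Your argument is correct and is exactly what the paper intends: its proof consists of the single sentence ``Use a diagram chase in the diagram \eqref{Big_Diagram},'' and you have spelled out that chase in full, correctly isolating the one nontrivial ingredient (torsion-freeness of $\oplus_y K_0(k(y))\cong\oplus_y\mathbb Z$, needed for the injectivity step). Your repackaging via the four-term exact sequence $0\to C^\bullet[r]\to C^\bullet\xrightarrow{\times r}C^\bullet\to\bar C^\bullet\to 0$ and the Bockstein formalism is a clean way to organize the chase, but it is the same argument, and the identification of row cohomology with $H^*_{Zar}(X,\underline K_2)$ and $H^*_{Zar}(X,\underline K_2/r)$ is already established in the paper immediately before the lemma is stated.
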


\begin{proof}
Use a diagram chase in the diagram \eqref{Big_Diagram}.
\end{proof}
Recall that $NH^3_{et}(X,\mu^{\otimes 2}_n)=ker[H^3_{et}(X,\mu^{\otimes 2}_n)\xrightarrow{\eta^*} H^3_{et}(\eta,\mu^{\otimes 2}_n)]$.
{\it The nearest aim is to identify
$H^1_{Zar}(X,\underline K_2/n)$
with
$NH^3_{et}(X,\mu^{\otimes 2}_n)$.
}
Recall that the norm residue homomorphisms identify the forth row of the diagram
\eqref{Big_Diagram} with the complex
\begin{equation}\label{Bloch_Ogus_X}
0\to H^2(\mathcal K,\mu^{\otimes 2}_n) \xrightarrow{\partial} \oplus_{x\in X^{(1)}}H^1(k(x),\mu_n) \xrightarrow{\partial}
\oplus_{y\in X^{(2)}}\mathbb Z/n \to 0
\end{equation}
By Theorem \ref{cousin_H_et_mu} the first cohomology of the complex \eqref{Bloch_Ogus_X} equals to
$H^1_{Zar}(X,\mathcal H^2)$. Thus, $H^1_{Zar}(X,\underline K_2/n)=H^1_{Zar}(X,\mathcal H^2)$.
To identify
$H^1_{Zar}(X,\underline K_2/n)$
with
$NH^3_{et}(X,\mu^{\otimes 2}_n)$
it is remained to check that
$H^1_{Zar}(X,\mathcal H^2)=NH^3_{et}(X,\mu^{\otimes 2}_n)$.

Theorem \ref{cousin_H_et_mu} yields a spectral sequence of the form
$H^p_{Zar}(X,\mathcal H^q)\Rightarrow H^{p+q}_{et}(X,\mu^{\otimes 2}_n)$.
The latter means that the group $H^{3}_{et}(X,\mu^{\otimes 2}_n)$
relates to the groups
$H^0_{Zar}(X,\mathcal H^3)$, $H^1_{Zar}(X,\mathcal H^2)$, $H^2_{Zar}(X,\mathcal H^1)$ and $H^3_{Zar}(X,\mathcal H^0)$.
By Theorem \ref{cousin_H_et_mu} the latter two groups vanish. Thus, there is a short exact sequence
$0\to H^1_{Zar}(X,\mathcal H^2)\to H^{3}_{et}(X,\mu^{\otimes 2}_n)\to H^0_{Zar}(X,\mathcal H^3)$.
By Theorem \ref{cousin_H_et_mu} the map
$H^0_{Zar}(X,\mathcal H^3)\xrightarrow{\eta^*} H^0_{Zar}(\eta,\mathcal H^3)=H^3_{et}(\eta,\mu^{\otimes 2}_n)$
is injective. Thus, the sequence
$0\to H^1_{Zar}(X,\mathcal H^2)\to H^{3}_{et}(X,\mu^{\otimes 2}_n)\xrightarrow{\eta^*} H^3_{et}(\eta,\mu^{\otimes 2}_n)$
is short exact. This proves the equality
$H^1_{Zar}(X,\mathcal H^2)=NH^{3}_{et}(X,\mu^{\otimes 2}_n)$.
Joining the latter equality with Lemma \ref{prop:Suslin_exact}
we get the following mixed characteristic version of the Suslin exact sequence
The following sequence is short exact
\begin{equation}
\label{Suslin_sequence_2}
0\to H^1_{Zar}(X,\underline K_2)/n \xrightarrow{\alpha} NH^{3}_{et}(X,\mu^{\otimes 2}_n) \xrightarrow{\beta} {_{n}}H^2_{Zar}(X,\underline K_2)\to 0,
\end{equation}
Theorem \ref{thm:Suslin_exact} is proved.
\end{proof}

Let $A$ be a {\it henzelian} d.v.r. Suppose the closed point $v$ of $V=Spec(A)$ is such that its residue field is finite.
Let $l$ be a prime different of $p$. For an abelian group $B$ write $_{\{l\}}B$ for the $l$-primary torsion subgroup of $B$.


\begin{rem}
Since $X$ is smooth projective over $V$ and irreducible and $A$ is henzelian it follows
that the closed fibre $X_v$ is
{\it irreducible too}.
To prove this one has to use the Steiner
decomposition of the morphism $p: X\to V$ as $X\xrightarrow{q} W\xrightarrow{\pi} V$,
where $q$ is surjective with connected fibres and $\pi$ is finite surjective.
Since $q$ is projective and surjective and $X$ is irreducible it follows that
$W$ is irreducible. Since $V$ is local henzelian and $\pi$ is finite it follows
that $W$ has a unique closed point, say $w$. Since the fibres of $q$ are connected
it follows that $X_v=p^{-1}(v)=q^{-1}(w)$ is connected. Since $X_v$ is $v$-smooth,
hence $X_v$ is irreducible.
\end{rem}

\begin{proof}[Proof of Theorem \ref{NH3=torsion_CH}]
Consider a commutative diagram of the form
\begin{equation}
\label{Big_Diagram_2}
    \xymatrix{
0 \ar[r]^{} & H^1_{Zar}(X,\underline K_2)\otimes \mathbb Q_l/\mathbb Z_l \ar[r]^{\alpha} \ar[d]_{\phi} & NH^{3}_{et}(X,\mathbb Q_l/\mathbb Z_l(2)) \ar[r]^{\beta} \ar[d]_{\psi} & {_{\{l\}}}H^2_{Zar}(X,\underline K_2) \ar[r]^{} \ar[d]_{\rho}
& 0 \\
0 \ar[r]^{} & H^1_{Zar}(X_v,\underline K_2)\otimes \mathbb Q_l/\mathbb Z_l \ar[r]^{\alpha_v}  & NH^{3}_{et}(X_v,\mathbb Q_l/\mathbb Z_l(2)) \ar[r]^{\beta_v}  & {_{\{l\}}}H^2_{Zar}(X_v,\underline K_2)  \ar[r]^{}
& 0\\
}
\end{equation}
The upper sequence is exact by Theorem \ref{thm:Suslin_exact}. The bottom sequence is the original Suslin exact sequence.
It is known \cite[Theorem 2.11]{Pa} that $H^1_{Zar}(X_v,\underline K_2)\otimes \mathbb Q_l/\mathbb Z_l=0$ and $\beta_v$ is an isomorphism.
The map $\psi$ is injective by the proper base change theorem. Thus,
$H^1_{Zar}(X,\underline K_2)\otimes \mathbb Q_l/\mathbb Z_l=0$
and the map $\beta$ is an isomorphism. These yield the injectivity of $\rho$.
One can show that the map $\rho$ is surjective. Hence $\rho$ is an isomorphism.
So, the map $\psi$ is an isomorphism too.
The assertion (d) will be proved below in the proof of Theorem
\ref{NH3=torsion_CH_2}.
The theorem is proved.
\end{proof}

\begin{proof}[Proof of Corollary \ref{thm:Roitman}]
Since $\bar A$ is algebraically closed and $d=2$ it follows that
for each integers $s$ and $n$ one has $H^3(\bar A(X_v), \mu^{\otimes s}_{l^n})=0$.
Thus,
$NH^{3}_{et}(X_v,\mathbb Q_l/\mathbb Z_l(2))=H^{3}_{et}(X_v,\mathbb Q_l/\mathbb Z_l(2))$.
The proper base change theorem shows that
$$H^{3}_{et}(X,\mathbb Q_l/\mathbb Z_l(2))=H^{3}_{et}(X_v,\mathbb Q_l/\mathbb Z_l(2)).$$
The item (c) of Theorem \ref{NH3=torsion_CH} shows now that
$NH^{3}_{et}(X,\mathbb Q_l/\mathbb Z_l(2))=H^{3}_{et}(X,\mathbb Q_l/\mathbb Z_l(2))$.
Hence the map $\beta$ gives rise to an isomorphism
$\beta: H^{3}_{et}(X,\mathbb Q_l/\mathbb Z(2))\to {_{\{l\}}}H^2_{Zar}(X,\underline K_2)$.
The second assertion of the corollary is proved. To prove the first one consider
the commutative diagram
\begin{equation}
\label{Big_Diagram_3}
    \xymatrix{
{_{\{l\}}}H^2_{Zar}(X,\underline K_2) \ar[r]^{\beta} \ar[d]^{\rho} & {_{\{l\}}}Alb(X/V) \ar[d]^{\epsilon}  \\
{_{\{l\}}}H^2_{Zar}(X_v,\underline K_2) \ar[r]^{\beta_v}  & {_{\{l\}}}Alb(X_v).  \\
}
\end{equation}
The map $\rho$ is an isomorphism by Theorem \ref{NH3=torsion_CH}. The map $\epsilon$ is an isomorphism
since $A$ is henzelian and $l$ is a prime different of $p$. The map
$\beta_v$ is an isomorphism by the Roitman theorem (or due to \cite[Thm. 2.11]{Pa} and the equality
$NH^{3}_{et}(X_v,\mathbb Q_l/\mathbb Z_l(2))=H^{3}_{et}(X_v,\mathbb Q_l/\mathbb Z_l(2))$
proven just above). Thus, $\beta$ is an isomorphism.

\end{proof}


\begin{proof}[Proof of Theorem \ref{NH3=torsion_CH_2}]
To prove assertions (a), (b) and (c)
repeat part of the arguments from the proof of Theorem \ref{NH3=torsion_CH}.

To prove the assertion (d) it is sufficient to prove that the group
$H^{3}_{et}(X,\mathbb Q_l/\mathbb Z(2))=H^{3}_{et}(X_v,\mathbb Q_l/\mathbb Z(2))$
is finite.
By the Weil conjecture \cite{D74} the group $H^{3}_{et}(X_v,\mathbb Q_l(2))$ vanishes.
It follows that the boundary map
$\delta: H^{3}_{et}(X_v,\mathbb Q_l/\mathbb Z(2))\to H^{4}_{et}(X_v,\mathbb Z(2))$
in the coefficient exact cohomology sequence identifies
$H^{3}_{et}(X_v,\mathbb Q_l/\mathbb Z(2))$
with
${_{\{l\}}}H^{4}_{et}(X_v,\mathbb Z(2))$.
Let $\mathbb F$ be the residue field of $A$ and $\Gamma=Gal(\mathbb F^{sep}/\mathbb F)$.
Then there is a short exact sequence of the form
$0\to H^1(\Gamma,H^3_{et}(\bar X_v,\mathbb Z_l(2)))\to H^{4}_{et}(X_v,\mathbb Z(2))\to H^0(\Gamma,H^4_{et}(\bar X_v,\mathbb Z_l(2)))\to 0.$

By \cite{D} for each integer $i$ the $\mathbb Z_l$-module $H^i_{et}(\bar X_v,\mathbb Z_l(2))$ is finitely generated.
Thus, the $\mathbb Z_l$-module $H^{4}_{et}(X_v,\mathbb Z(2))$ is also finitely generated.
Hence its subgroup
${_{\{l\}}}H^{4}_{et}(X_v,\mathbb Z(2))$
is finite and so is the group
$H^{3}_{et}(X_v,\mathbb Q_l/\mathbb Z(2))$.
The theorem is proved.
\end{proof}

\end{document}